\newtheorem{thm}{Theorem}[section]
\newtheorem{lem}{Lemma}[section]
\newtheorem{defn}{Definition}[section]
\newtheorem{hyp}{Hypothesis}[section]
\newcommand{\Rmnum}[1]
{expandafter\@slowromancap\romannumeral#1@}
\numberwithin{equation}{section}
\numberwithin{figure}{section}
\begin{document}

\title{Limiting behavior of inertial manifolds for stochastic differential equations driven by non-Gaussian L\'{e}vy noise}
\author{Longyu Wu,\,\,\,Ji Shu\thanks{Corresponding author. \textit{E-mail addresses}:  wulongyul@163.com(L.Wu),
 shuji@sicnu.edu.cn(J.Shu).}\small\\
\textit{School of Mathematical Sciences,  Sichuan Normal University, Chengdu 610066, PR China}\small}

\date{}
\maketitle
 {\begin{center}
\begin{minipage}{6.0in}
{\textbf{Abstract:} In this paper, we study the limiting behavior for stochastic differential equations driven by non-Gaussian $\alpha$-stable L\'{e}vy noise as $\alpha$ approaches 2. We first prove the convergence of solutions for system driven by $\alpha$-stable L\'{e}vy noise to those of the system driven by Brownian motion. Then we construct the $C^1$ inertial manifolds for both systems and show that these inertial manifolds converge in probability as $\alpha\rightarrow2$. }\\
{\textbf{Key words:} Random dynamical systems; non-Gaussian system; $\alpha$-stable L\'{e}vy noise; Inertial manifolds}\\
\end{minipage}
\end{center}}
\section{Introduction}
 \indent
 \par
We consider a class of stochastic differential equations driven by $\alpha$-stable L\'{e}vy noise in a separable Hilbert space $H$:
\begin{equation}
\label{eq:3-1.1}
dv^{\alpha}+Av^{\alpha}=F(v^{\alpha})dt+v^{\alpha}\diamond L^{\alpha}_{t},
\end{equation}
where $ -A $ is a generator of an analytic semigroup $e^{-At}$ on $H$, $F$ is a nonlinear operator from a fractional power space $H^{\sigma}$ into $H$ for $0\leq\sigma<1$, $L^{\alpha}$ is an $\alpha$-stable L\'{e}vy process and the symbol $ \diamond$ denotes the Marcus integral.  \par

Let $ \{W_t\}_{t \geq 0} $ be a one-dimensional standard Brownian motion. Let $\{S_t^\alpha\}_{t \geq 0} $ be an independent $ \frac{\alpha}{2} $-stable subordinator and $ J^\alpha $ be a Poisson random measure with intensity measure $ dt\lambda^\alpha(dx)$, where $\lambda^\alpha$ is a L\'evy measure on $ \mathbb{R}^+ $ and
\begin{equation*}
\lambda^\alpha(dx) := \mathbb{E}J^\alpha(1, dx) = C(\alpha) \frac{dx}{|x|^{1+\frac{\alpha}{2}}},
\end{equation*}
with
\begin{equation*}
C(\alpha) := \frac{1}{2^{1 - \frac{\alpha}{2}}} \frac{\alpha}{\Gamma\left(1 - \frac{\alpha}{2}\right)}.
\end{equation*}
Since $\lim\limits_{s \to 0} s\Gamma(s) = 1 $, we get
\begin{equation*}
\lim\limits_{\alpha \to 2} \frac{2C(\alpha)}{2-\alpha} = 1.
\end{equation*}
The $ \frac{\alpha}{2}$-stable subordinator $ S^\alpha $ is given by
\begin{equation*}
S_t^\alpha = \int_0^\infty x J^\alpha(t, dx).
\end{equation*}
The subordinated Brownian motion in $ \mathbb{R} $ is defined by $ \{W_{S_t^\alpha}\}_{t \geq 0} $. It is well-known that the subordinated Brownian motion $ \{W_{S_t^\alpha}\}_{t \geq 0}$ is an $\alpha $-stable process (see \cite{Applebaum09,Sato99}). We sometimes denote
$ W_{S_t^\alpha}$ by $L^{\alpha}_{t}$.
\par
In the current paper, we consider the following stochastic differential equation driven by subordinated Brownian motion in separable Hilbert spaces $H$,
\begin{equation}
	\label{eq:3-1.2}
\frac{dv^\alpha}{dt}+Av^{\alpha}= F(v^\alpha) + v^\alpha \diamond \dot{W}_{S_t^\alpha}, \quad v^\alpha(0) = x \in H.
\end{equation}
When $\alpha = 2$, we have the standard Brownian motion, which the Marcus integral reduces to the Stratonovich stochastic integral. Then we get equation
\begin{equation}
\label{eq:3-1.3}
\frac{dv}{dt}+Av =F(v) + v \circ \dot{W}_t, \quad v(0) = x \in H,
\end{equation}
where the symbol $\circ$ denotes the Stratonovich integral and $W_t$ is a scalar standard Brownian motion. \par
Inertial manifolds are useful tools in the study of the long-time behaviour of infinite dimensional dynamical systems. The concept of inertial manifolds was introduced by Foias, Sell and Temam \cite{Foias88}. These manifolds are finite-dimensional invariant surface which attracts exponentially
all solutions, and thus the long-time dynamics restricted to the inertial manifold reduces to a finite system,
which is called an inertial form. The theory of inertial manifolds has
been extensively studied for deterministic and stochastic systems, as exemplified by reference such as \cite{Bensoussan95,Boutet98,Chow92,Chueshov95,Chueshov01,Constantin89,Da96,Henry81,Koksch02,Temam88,Zhao22,Zhao24}. These studies are focused on either deterministic differential equations or stochastic differential equations driven by Gaussian noise. \par
However, non-Gaussian noise plays an important role in many complex systems, which are often observed in biological, physical and
neural systems, etc. In particular, non-Gaussian L\'{e}vy noise has been widely applied 
in recent researches, see for example \cite{Applebaum09,Protter05,Peszat07,Qiao22,Sato99,Xu24} and the references therein. As a special non-Gaussian stochastic process, the $\alpha$-stable L\'{e}vy process are gaining increasing attention in applications. Gu and Ai \cite{Gu14} proved the existence of a random attractor for stochastic lattice
dynamical systems with $\alpha$-stable L\'{e}vy noises. Qiao and Duan \cite{Qiao15} studied the asymptotic methods for stochastic dynamical systems with small non-Gaussian L\'{e}vy noise. Yuan et al. \cite{Yuan19} constructed the slow manifolds for dynamical systems with non-Gaussian $\alpha$-stable L\'evy noise. Chao et al. \cite{Chao20} explored the role of slow manifolds in parameter estimation for a multiscale stochastic system with $\alpha$-stable L\'{e}vy noise. Liu \cite{Liu21} discussed the random stable invariant manifolds of stochastic evolution equations driven by Gaussian and non-Gaussian noises. Liu \cite{Liu22} studied the convergence behavior of the solutions to stochastic differential equations with Lipschitz and H\"{o}lder drifts, and driven by $\alpha$-stable L\'{e}vy processes. To the best of our knowledge, there are few results available in the literature concerning invariant manifolds of stochastic differential equations driven by non-Gaussian L\'{e}vy noise. In this work, we investigate the $C^1$-convergence of inertial manifolds for the system subject to an $\alpha$-stable L\'{e}vy process. \par

The goal of this paper is to construct the $C^1$ inertial manifolds for stochastic differential equations driven by $\alpha$-stable L\'{e}vy noise, as well as to study the convergence both in the solutions and $C^1$ inertial manifolds of systems driven by Gaussian and non-Gaussian noises. More precisely, we first prove the convergence relation of solutions between equation \eqref{eq:3-1.2} and equation \eqref{eq:3-1.3}. Then, we establish the existence of $C^1$ inertial manifolds for equation \eqref{eq:3-1.2} and equation \eqref{eq:3-1.3}. Furthermore, we show the $C^1$ inertial manifolds of equation \eqref{eq:3-1.2} converges in probability to those of equation \eqref{eq:3-1.3} as $\alpha \rightarrow 2$. \par
We organize this paper as follows. In Section 2, we first introduce some basic concepts and fundamental assumptions. In Section 3, we prove the convergence of solutions for equation \eqref{eq:3-1.2} and equation \eqref{eq:3-1.3}. In Section 4, we construct the $C^1$ inertial manifolds and show the convergence of these inertial manifolds for equation \eqref{eq:3-1.2} and equation \eqref{eq:3-1.3}.

\section{Preliminaries}
\indent
\par
Let $(\Omega, \mathcal{F}, \mathbb{P})$ be a probability space and $H$ be a separable Hilbert space. We denote by
$\mathcal{B}(\mathbb{R})$, $\mathcal{B}(\mathbb{R}^{+})$ and $\mathcal{B}(H)$ the collections of Borel sets on $\mathbb{R}$, $\mathbb{R}^{+}$ and $ H $, respectively. We first recall some basic concepts about random dynamical systems which from Arnold \cite{Arnold98}. \par
\subsection{Random dynamical system}
\indent
\par
\begin{defn}\label{Def:2.1}
$(\Omega, \mathcal{F}, \mathbb{P}, \{\theta_{t}\}_{t\in \mathbb{R}})$ is called a metric dynamical system if\\

\noindent
$(\mathbf{i})$\ $\theta:\mathbb{R}\times\Omega\rightarrow \Omega$ is $(\mathcal{B}(\mathbb{R})\otimes \mathcal{F}, \mathcal{F})$-measurable;\\
$(\mathbf{ii})$\ $\theta_{0}=id_{\Omega} $, the identity on $\Omega, \theta_{t+s}=\theta_{t}\circ\theta_{s}$ for all $ t,s\in \mathbb{R} $;\\
$(\mathbf{iii})$\ $\theta_{t}\mathbb{P}=\mathbb{P}$ for all $ t\in\mathbb{R} $.
\end{defn}
\begin{defn}\label{Def:2.2}
A mapping\par
\begin{center}
 $\phi:\mathbb{R}^{+}\times\Omega\times H\rightarrow H,\  (t, \omega, x)\mapsto\phi(t, \omega, x) $
\end{center}
is called a random dynamical system over a metric dynamical system $(\Omega, \mathcal{F}, \mathbb{P}, \{\theta_{t}\}_{t\in \mathbb{R}})$ if \\

\noindent
$(\mathbf{i})$\ $ \phi $ is $(\mathcal{B}(\mathbb{R}^{+})\otimes \mathcal{F}\otimes\mathcal{B}(H),  \mathcal{B}(H))$-measurable;\\
$(\mathbf{ii})$\ the mapping $ \phi(t,\omega) :=\phi(t, \omega, \cdot): H\rightarrow H $ forms a cocycle over $ \theta_{t} $:\\
\begin{center}
$ \phi(0, \omega)= id_{H} $, \ $\forall \omega\in\Omega$,
\end{center}
\begin{center}
$ \phi(t+s, \omega)= \phi(t, \theta_{s}\omega)\circ\phi(s, \omega) $, \ $\forall s, t\in\mathbb{R}^{+},\ \omega\in\Omega $.
\end{center}
$\phi$ is called a $C^{1}$ smooth random dynamical system if $\phi$ is a random dynamical system and for each $(t,\omega)\in \mathbb{R}^{+}\times\Omega$ the mapping
\begin{center}
$\phi(t,\omega): H\rightarrow H,\ x\mapsto\phi(t, \omega)x$
\end{center}
is $C^{1}$.
\end{defn}
\par
\subsection{Random inertial manifolds}
\begin{defn}\label{Def:2.3}
 A multifunction $ M=(M(\omega))_{\omega\in\Omega} $ of nonempty closed sets $ M(\omega)\subset H $, $\omega\in \Omega$, is called a random set if
\begin{align*}
\omega\mapsto\inf_{y\in M(\omega)}\|x-y\|
\end{align*}
is a random variable for all $ x\in H$.
\end{defn}
\begin{defn}\label{Def:2.4}
A random set $ M(\omega) $ is called an invariant set for a random dynamical system $ \phi(t,\omega,x) $ if
\begin{align*}
\phi(t,\omega,M(\omega))\subset M(\theta_{t}\omega), \ \forall t\geq 0.
\end{align*}
\end{defn}
\begin{defn}\label{Def:2.5}
A invariant set $ M=(M(\omega))_{\omega\in\Omega} \subset H$ is called Lipschitz (resp. $C^1$) inertial manifold of random dynamical system $ \phi(t,\omega,x) $ if the following properties are satisfied:\\
$(\mathbf{i})$\ $M(\omega)$ can be represented by a graph of a Lipschitz (resp. $C^1$) map $m(\omega,\cdot): H_1 \to H_2$, i.e.,
\begin{align*}
  M(\omega) = \{ \xi + m(\omega,\xi) \mid \xi \in H_1 \},
\end{align*}
where $H_1$ is a finite dimensional subspace of $H$ such that $H=H_1\oplus H_2$.\\
$(\mathbf{ii})$\ $M$ is exponentially attracting: there exists a positive constant $b$ such that for every $x \in H$, there is a random variable $K = K(\omega, x)$ such that
  \begin{align*}
  \text{dist}(\varphi(t, \omega, x), M(\theta_t \omega)) \leq K e^{-b t}, \  \forall t \geq 0.
  \end{align*}
\end{defn}
We say that $M$ has the \emph{asymptotic completeness} property: For any $u_0 \in H$, there exists $\tilde{u}_0 = \tilde{u}_0(\omega) \in M(\omega)$ and $C$, $b > 0$ such that
\begin{align*}
\|\varphi(t, \omega, u_0) - \varphi(t, \omega, \tilde{u}_0)\| \leq C e^{-b t} \|u_0 - \tilde{u}_0\|, \  \forall t \geq 0.
\end{align*}
Clearly, $M$ has the asymptotic completeness property implies that $M$ is exponentially attracting.\par

\subsection{Setting}
\indent
\par
Let $H$ be a separable Hilbert spaces with norm $\parallel\cdot\parallel$, we will specify the assumptions on the linear operator $A$ and nonlinear term $F$.
\begin{hyp}
\label{hyp:2.1}
$A$ is a positive operator with discrete spectrum on $H$, i.e., there exists an orthonormal basis $\{e_k\}$ of $H$ such that
\begin{align*}
A e_k = \lambda_k e_k, \ with\  0 < \lambda_1 \leq \lambda_2 \leq \cdots, \ \lim_{k \to +\infty} \lambda_k = +\infty.
\end{align*}
\end{hyp}
For arbitrary $\sigma \geq 0$, we denote by $D(A^{\sigma})$ the domain of $A^{\sigma}$, which is a Banach space under the norm $\|\cdot\|_{\sigma} = \|A^{\sigma} \cdot\|$. Let $0\leq\sigma <1$, we make the following hypothesis on $F$.
\begin{hyp}
\label{hyp:2.2}
$F : D(A^{\sigma}) \to H$ satisfies the Lipschitz condition, i.e., there exists a constant $L>0$ such that
\begin{align}
\label{3-2.1}
\|F(u_1) - F(u_2)\| \leq L \|u_1 - u_2\|_{\sigma}, \ \forall u_1, u_2 \in D(A^{\sigma}),
\end{align}
and $F(0) = 0$.
\end{hyp}
Fixing an integer $N \geq 1$ such that $\lambda_N < \lambda_{N+1}$ and denote by $P = P_N$ the orthogonal projector onto the space spanned by the first $N$ eigenvectors of $A$. Let $Q = I - P$, then one has the following estimates (see \cite{Chueshov01}):

\begin{equation}
\begin{aligned}
\label{3-2.2}
\| A^{\sigma} e^{-At} P \| &\leq \lambda_N^{\sigma} e^{\lambda_N |t|}, \  \forall t \in \mathbb{R},\\
\| e^{-At} Q \| &\leq e^{-\lambda_{N+1} t}, \  \forall t \geq 0,\\
\| A^{\sigma} e^{-At} Q \| &\leq \left[ \left( \frac{\sigma}{t} \right)^{\sigma} + \lambda_{N+1}^{\sigma} \right] e^{-\lambda_{N+1} t}, \  \forall t > 0, \ \forall \sigma > 0.
\end{aligned}
\end{equation}
\par
Let $D([0,T], \mathbb{R})$ be the space of c\`adl\`ag $\mathbb{R}$-valued functions on $[0,T]$. The following $J_1$-metric was defined by Skorokhod \cite{Skorokhod56}:
\begin{equation*}
d_{J_1}(\varphi_1, \varphi_2) = \inf_{\lambda \in \Lambda} \left\{ \sup_{0 \leq t \leq T} |\varphi_1(t) - \varphi_2(\lambda(t))| + \sup_{\substack{s,t \in [0,T]\\ s \neq t}} \left| \log \frac{\lambda(s) - \lambda(t)}{s-t} \right| \right\}, \  \varphi_1, \varphi_1 \in D([0,T], \mathbb{R}),
\end{equation*}
where $\Lambda$ is the set of all the strictly increasing continuous functions that map $[0,T]$ onto itself.
Let $D_{J_1}([0,T], \mathbb{R})$ be $D([0,T], \mathbb{R})$ equipped with the $J_1$-metric, and $D_U([0,T], \mathbb{R})$ denote $D([0,T], \mathbb{R})$ equipped with the usual uniform metric $d_U$, see \cite{Billingsley99, Jacod03, Pollard12}.\par
In this paper, we consider the two-sided subordinated Brownian motion $ W_{S_t^\alpha}$. Let $\mathbb{W}$ be the space of all continuous functions from $\mathbb{R}$ to $\mathbb{R}$ taking zero value at $t=0$, which is endowed the compact open (locally uniform
convergence) topology and Wiener measure $\mu_{\mathbb{W}}$ so that the coordinate process $ W_{t}(\omega)=\omega_{t}$ is a standard one-dimensional Brownian motion. Let $\mathbb{S}$ be the space of all increasing and c\`adl\`ag functions from $\mathbb{R}$ to $\mathbb{R}$ with $\lim\limits_{s \to 0} \ell_s = 0$, which is endowed with the $J_1$-Skorokhod metric and the probability $\mu_{\mathbb{S}}$ so that the coordinate process
\begin{align*}
S_t^\alpha(\ell^\alpha) := \ell_t^\alpha
\end{align*}
is an $\frac{\alpha}{2}$-stable subordinator $S_t^\alpha$ (see \cite{Sato99, Zhang13}).
Consider the following product probability space:
\begin{align*}
(\Omega, \mathcal{F}, \mathbb{P}) := (\mathbb{W} \times \mathbb{S}, \mathcal{B}(\mathbb{W}) \times \mathcal{B}(\mathbb{S}), \mu_{\mathbb{W}} \times \mu_{\mathbb{S}})
\end{align*}
and define
\begin{align*}
W_{S_t^\alpha} = L_t^\alpha(\omega, \ell^\alpha) := \omega_{\ell_t^\alpha}.
\end{align*}
Then $\{L_t^\alpha\}_{t \geq 0}$ is an $\alpha$-stable process on $(\Omega, \mathcal{F}, \mathbb{P})$. Since $(\mathbb{W}, \mathcal{B}(\mathbb{W}), \mu_{\mathbb{W}})$ and $(\mathbb{S}, \mathcal{B}(\mathbb{S}), \mu_{\mathbb{S}})$ are two Polish spaces, then $(\Omega, \mathcal{F}, \mathbb{P})$ is also a Polish space. The driving system $\theta$ on $\Omega$ is defined by the shift operator
\begin{align*}
\theta_t(\omega_{\ell_s^\alpha}) = \omega_{\ell_{t+s}^\alpha}-\omega_{\ell_t^\alpha},
\end{align*}
which satisfies the condition in Definition \ref{Def:2.1}.
Then the map $(t, \omega_{\ell^\alpha}) \mapsto \theta_t \omega_{\ell^\alpha}$ is continuous, thus measurable, and the L\'evy probability measure $\mathbb{P}$ is $\theta$-invariant, see \cite{Liu21}.

\section{Convergence of solutions}
\indent
\par
In this section, we show the solutions of equations \eqref{eq:3-1.2} converge to the solutions of equation \eqref{eq:3-1.3} as $\alpha \rightarrow 2$. Firstly, we give the conjugate random equations of equation \eqref{eq:3-1.2} and equation \eqref{eq:3-1.3}, respectively.
Let $W_t$ be a two-sided standard Brownian motion, then $W_{S_t^\alpha} = L_t^\alpha(\omega, \ell^\alpha)$ is a two-sided symmetric $\alpha$-stable L\'evy motion on $\mathbb{R}$. We consider the Langevin equation
\begin{align}
\label{3-2.3}
dz = -zdt + dW_{S_t^\alpha}.
\end{align}
A solution of this equation is called an Ornstein--Uhlenbeck process. Moreover, for every $\alpha \in (1,2)$, we have the following lemmas from \cite{Liu21}.
\begin{lem}
\label{lem:3-2.1}
$(\mathbf{i})$\ There exists a $\{\theta_t: t \in \mathbb{R}\}$-invariant set (still denoted as) $\Omega$ of full measure such that the sample paths $\omega_{\ell^\alpha_t}$ of $W_{S_t^\alpha}$ satisfy
\begin{align*}
\lim_{t \to \pm \infty} \frac{\omega_{\ell^\alpha_t}}{t} = 0, \quad \omega \in \Omega.
\end{align*}
$(\mathbf{ii})$\
The random variable
\begin{align*}
z(\omega_{\ell^\alpha}) = - \int_{-\infty}^{0} e^{s} \omega_{\ell^\alpha_s} ds, \quad \omega \in \Omega
\end{align*}
is well-defined and the unique stationary solution of \eqref{3-2.3} is given by
\begin{align*}
z(\theta_t \omega_{\ell^\alpha}) &= - \int_{-\infty}^{0} e^{s} \theta_t \omega_{\ell^\alpha_s} ds = \omega_{\ell^\alpha_t} - \int_{-\infty}^{0} e^{s} \omega_{\ell^\alpha_{t+s}} ds.
\end{align*}
Moreover, the mapping $t \mapsto z(\theta_t \omega_{\ell^\alpha})$ is c\`adl\`ag.\\
$(\mathbf{iii})$\ In addition,
\begin{align*}
\lim_{t \to \pm \infty} \frac{|z(\theta_t \omega_{\ell^\alpha})|}{|t|} = 0, \  \mathrm{and} \  \lim_{t \to \pm \infty} \frac{1}{t} \int_0^t z(\theta_s \omega_{\ell^\alpha}) ds = 0.
\end{align*}
\end{lem}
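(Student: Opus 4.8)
The plan is to derive all three parts from a single analytic input: the sublinear growth of the $\alpha$-stable sample paths, which for $\alpha \in (1,2)$ is a consequence of the strong law of large numbers for L\'evy processes. Because $\alpha > 1$, the symmetric $\alpha$-stable process $W_{S_t^\alpha}$ has a finite first absolute moment, and by symmetry its mean is zero; the strong law then gives $\omega_{\ell^\alpha_t}/t \to 0$ almost surely as $t \to +\infty$, and the analogous statement as $t \to -\infty$ follows by applying the same law to the time-reversed independent copy used in the two-sided construction, via $\omega_{\ell^\alpha_t}/t = \omega_{\ell^\alpha_{-t}}/(-t)$ for the reversed process.

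For part (i), I would take $\Omega$ to be the full-measure set on which both one-sided limits vanish. The only extra point is $\theta$-invariance: using $(\theta_s \omega)_{\ell^\alpha_t} = \omega_{\ell^\alpha_{t+s}} - \omega_{\ell^\alpha_s}$ and writing $\frac{\omega_{\ell^\alpha_{t+s}}}{t} = \frac{\omega_{\ell^\alpha_{t+s}}}{t+s} \cdot \frac{t+s}{t}$, one checks that the sublinear growth property is preserved by every shift $\theta_s$, so $\Omega$ is automatically invariant and no further modification is needed.

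For part (ii), the sublinear bound $|\omega_{\ell^\alpha_s}| \leq C(\omega)(1 + |s|)$ makes $e^s \omega_{\ell^\alpha_s}$ integrable on $(-\infty, 0]$, so $z(\omega)$ is well-defined; substituting $(\theta_t \omega)(s) = \omega_{\ell^\alpha_{t+s}} - \omega_{\ell^\alpha_t}$ and using $\int_{-\infty}^0 e^s\,ds = 1$ yields the two stated expressions. For the claim that $z(\theta_t \omega)$ solves the Langevin equation, the main obstacle is that $W_{S_t^\alpha}$ is only c\`adl\`ag, so I would not differentiate but instead verify the integrated form directly. Writing $z(\theta_t \omega) = \omega_{\ell^\alpha_t} - e^{-t} \int_{-\infty}^t e^r \omega_{\ell^\alpha_r}\,dr$ and integrating by parts against the smooth factor $e^{-u}$, which is legitimate since the inner integral is absolutely continuous, gives the identity $\int_0^t z(\theta_u \omega)\,du = z(\omega) - z(\theta_t \omega) + \omega_{\ell^\alpha_t}$; this is exactly the integral form of $dz = -z\,dt + dW_{S_t^\alpha}$. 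The c\`adl\`ag property follows since the first term is c\`adl\`ag while the integral term is continuous in $t$ by dominated convergence, and uniqueness follows because any two stationary solutions differ by a solution of $\dot{h} = -h$, hence by $Ce^{-t}$, which is stationary only if $C = 0$.

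For part (iii), the first limit comes from writing $\frac{z(\theta_t \omega)}{t} = \frac{\omega_{\ell^\alpha_t}}{t} - \int_{-\infty}^0 e^s \frac{\omega_{\ell^\alpha_{t+s}}}{t}\,ds$ and passing to the limit: part (i) kills the first term, while dominated convergence, with the integrable majorant supplied by the sublinear bound, kills the integral. The second limit is then immediate from the integral identity established in (ii): dividing $\int_0^t z(\theta_s \omega)\,ds = z(\omega) - z(\theta_t \omega) + \omega_{\ell^\alpha_t}$ by $t$ and letting $t \to \pm\infty$, the first limit of (iii) together with part (i) force the right-hand side to zero, so no appeal to Birkhoff's ergodic theorem is required.
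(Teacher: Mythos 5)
The paper does not prove this lemma at all: it is quoted verbatim from reference \cite{Liu21} (``for every $\alpha\in(1,2)$, we have the following lemmas from [Liu21]''), so there is no in-paper argument to compare against. Judged on its own, your proof is correct and complete, and it follows the standard route for such Ornstein--Uhlenbeck lemmas (the Brownian case in Duan--Lu--Schmalfuss and its L\'evy analogue in Liu's paper). The key analytic input is exactly right: for $\alpha\in(1,2)$ the symmetric $\alpha$-stable process has a finite first moment with mean zero, so the strong law of large numbers for L\'evy processes gives the pathwise sublinearity in (i) --- and your derivation makes explicit why the hypothesis $\alpha>1$ is needed, which the paper leaves implicit. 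The delicate points are all handled: the shift-invariance of the sublinearity set (which, combined with $\theta_s\Omega_0\subset\Omega_0$ for all $s\in\mathbb{R}$, does give $\theta_s\Omega_0=\Omega_0$); the verification of the Langevin equation in integrated rather than differential form, via integration by parts against $e^{-u}$ applied to the absolutely continuous function $u\mapsto\int_{-\infty}^u e^r\omega_{\ell^\alpha_r}\,dr$ (I checked the resulting identity $\int_0^t z(\theta_u\omega)\,du=z(\omega)-z(\theta_t\omega)+\omega_{\ell^\alpha_t}$ and it is exact); the c\`adl\`ag property from the decomposition into a c\`adl\`ag term plus a continuous convolution term; and uniqueness via the scaling argument $e^{-t}C\overset{d}{=}C$ forcing $C=0$. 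One genuinely nice feature of your write-up is the last step: the Ces\`aro limit in (iii) is usually obtained from Birkhoff's ergodic theorem together with an integrability check on $z$, whereas you get it for free by dividing the integral identity from (ii) by $t$ and invoking (i) and the first limit of (iii); this is both more elementary and avoids having to verify $\mathbb{E}|z|<\infty$. No gaps.
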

We now replace $\mathcal{B}(\mathbb{W}) \times \mathcal{B}(\mathbb{S})$ by
\begin{align*}
\mathcal{F} = \{ \Omega \cap B : B \in \mathcal{B}(\mathbb{W}) \times \mathcal{B}(\mathbb{S}) \}
\end{align*}
for $\Omega$ given in Lemma \ref{lem:3-2.1}. The probability measure is the restriction of $\mu_{\mathbb{W}} \times \mu_{\mathbb{S}}$ to this new $\sigma$-algebra, also denoted by $\mathbb{P}$. In the following, we will consider the metric dynamical system $(\Omega, \mathcal{F}, \mathbb{P}, \mathbb{R}, \theta)$. Note that for standard Brownian motion $W_t$, we have the similar results, see \cite{Duan03}.\par
\begin{lem}
\label{lem:3-2.2}
For every $p$ in $(0,2)$ and $T > 0$,
\begin{align*}
\lim_{\alpha \to 2} \mathbb{E} \sup_{-T \leq t \leq T} \left| z(\theta_t \omega_{\ell^\alpha}) - z(\theta_t \omega) \right|^p = 0.
\end{align*}
\end{lem}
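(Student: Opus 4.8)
The plan is to reduce the statement to the convergence of the driving noise by using the explicit stationary representation from Lemma~\ref{lem:3-2.1}(ii) (and its Brownian analogue from \cite{Duan03}). Subtracting the two formulas,
\begin{align*}
z(\theta_t\omega_{\ell^\alpha})-z(\theta_t\omega)=\bigl(\omega_{\ell^\alpha_t}-\omega_t\bigr)-\int_{-\infty}^{0}e^{s}\bigl(\omega_{\ell^\alpha_{t+s}}-\omega_{t+s}\bigr)\,ds,
\end{align*}
so, taking the supremum over $-T\le t\le T$ and using the triangle inequality together with $\int_{-\infty}^0 e^s\,ds=1$,
\begin{align*}
\sup_{|t|\le T}\bigl|z(\theta_t\omega_{\ell^\alpha})-z(\theta_t\omega)\bigr|
\le \sup_{|t|\le T}\bigl|\omega_{\ell^\alpha_t}-\omega_t\bigr|
+\int_{-\infty}^{0}e^{s}\,\sup_{|t|\le T}\bigl|\omega_{\ell^\alpha_{t+s}}-\omega_{t+s}\bigr|\,ds.
\end{align*}
Everything then hinges on how close the subordinated path $\omega_{\ell^\alpha_\cdot}$ is to $\omega_\cdot$ in the locally uniform sense, both on the fixed window and, through the convolution, on the whole half-line.

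First I would establish that the $\frac{\alpha}{2}$-stable subordinator degenerates to the identity as $\alpha\to2$. The normalization encoded in $\lim_{\alpha\to2}\frac{2C(\alpha)}{2-\alpha}=1$ forces the Laplace exponent $u\mapsto\int_0^\infty(1-e^{-ux})\,\lambda^\alpha(dx)$ of $S^\alpha_t$ to converge to $u$, whence $\mathbb{E}\,e^{-uS^\alpha_t}\to e^{-ut}$ and $S^\alpha_\cdot$ converges in law to the deterministic path $t\mapsto t$ in the $J_1$-Skorokhod topology. Since the limit is continuous and deterministic, this upgrades to $\sup_{|t|\le T'}|\ell^\alpha_t-t|\to0$ in probability, for every fixed $T'>0$. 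Combining this with the almost sure local uniform continuity of the Brownian sample path $\omega$ gives $\sup_{|t|\le T'}|\omega_{\ell^\alpha_t}-\omega_t|\to0$ in probability.

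The first term above is then handled directly with $T'=T$. For the convolution I would split $\int_{-\infty}^0=\int_{-M}^0+\int_{-\infty}^{-M}$: on $(-M,0)$ the argument $t+s$ stays in the compact window $[-T-M,T]$, so this part is dominated by $\sup_{|r|\le T+M}|\omega_{\ell^\alpha_r}-\omega_r|$ and vanishes in probability by the previous step; for the tail I would bound $e^{s}|\omega_{\ell^\alpha_{t+s}}-\omega_{t+s}|$ in $L^q$, using the sublinear growth of $\omega_{\ell^\alpha_\cdot}$ and $\omega_\cdot$ from Lemma~\ref{lem:3-2.1}(i), so that by Minkowski's integral inequality $\int_{-\infty}^{-M}e^s\|\,\cdot\,\|_q\,ds$ is made uniformly small in $\alpha$ once $M$ is large. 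Together these show that $X_\alpha:=\sup_{|t|\le T}|z(\theta_t\omega_{\ell^\alpha})-z(\theta_t\omega)|\to0$ in probability.

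Finally I would upgrade convergence in probability to convergence of the $p$-th moment, which is where the restriction $p\in(0,2)$ is decisive. Fixing $q\in(p,2)$, for $\alpha$ sufficiently close to $2$ one has $q<\alpha$, so the stationary $\alpha$-stable solution possesses a finite $q$-th moment; the crux, and what I expect to be the main obstacle, is a maximal moment estimate $\mathbb{E}\sup_{|t|\le T}|z(\theta_t\omega_{\ell^\alpha})|^q\le C$ that is \emph{uniform} in $\alpha\in(q,2)$, resting on the convergence of the $\alpha$-stable law to the Gaussian one and on stability of the moment constants as $\alpha\uparrow2$. Granting such a bound (the Gaussian term $\mathbb{E}\sup_{|t|\le T}|z(\theta_t\omega)|^q$ being finite for all $q$), a standard truncation and H\"older argument combines $\sup_\alpha\mathbb{E}\,X_\alpha^q<\infty$ with $X_\alpha\to0$ in probability to yield $\mathbb{E}\,X_\alpha^p\to0$, proving the lemma.
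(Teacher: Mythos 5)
The paper never proves this lemma: it is imported verbatim from \cite{Liu21} (the sentence preceding Lemma \ref{lem:3-2.1} announces that both lemmas are taken from that reference), so there is no in-paper argument to measure yours against. Judged on its own, your outline follows the natural route --- subtract the two stationary representations, reduce everything to the locally uniform distance between $\omega_{\ell^\alpha_\cdot}$ and $\omega_\cdot$ via degeneration of the subordinator to the identity, then upgrade convergence in probability to convergence of $p$-th moments by uniform integrability --- and that architecture is sound; the restriction $p<2$ enters exactly where you say it does.

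Two steps, however, are asserted rather than proved, and one of them is mis-justified as written. For the tail $\int_{-\infty}^{-M}$ of the convolution you appeal to ``the sublinear growth \ldots from Lemma \ref{lem:3-2.1}(i)''; that is a pathwise, almost-sure statement and yields no $L^q$ control, so it cannot feed Minkowski's integral inequality. What you actually need there is a maximal moment bound of the form $\mathbb{E}\sup_{|t|\le T}|L^{\alpha}_{t+s}|^{q}\le C\,(T+|s|)^{q/\alpha}$ with $C$ independent of $\alpha$; this does hold, via L\'evy's maximal inequality for symmetric L\'evy processes combined with the self-similarity $\bigl(\mathbb{E}|L^{\alpha}_{r}|^{q}\bigr)^{1/q}=|r|^{1/\alpha}\bigl(\mathbb{E}|L^{\alpha}_{1}|^{q}\bigr)^{1/q}$ and the continuity of $\alpha\mapsto\mathbb{E}|L^{\alpha}_{1}|^{q}$ on $(q,2]$, which makes it bounded for $\alpha$ bounded away from $q$. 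The second soft spot is the uniform bound $\mathbb{E}\sup_{|t|\le T}|z(\theta_t\omega_{\ell^\alpha})|^{q}\le C$ that you ``grant'' at the end: you correctly identify it as the crux, but your proposed justification (convergence of the one-dimensional stable laws to the Gaussian) controls only fixed-time marginals, not the supremum. In fact this bound is obtained by exactly the same L\'evy-inequality-plus-scaling estimate pushed through the representation $z(\theta_t\omega_{\ell^\alpha})=\omega_{\ell^\alpha_t}-\int_{-\infty}^{0}e^{s}\omega_{\ell^\alpha_{t+s}}\,ds$ and Minkowski, so it is not an independent obstacle --- but as written your proof is conditional on it. Supply those two estimates and the argument closes.
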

\noindent This lemma characterizes the convergence behavior of the stationary Ornstein--Uhlenbeck process $z(\theta_t \omega_{\ell^\alpha})$ as $\alpha \rightarrow 2$.\par
Let $u^\alpha=e^{-z(\theta_t \omega_{\ell^\alpha})}v^\alpha$. Then, $u^\alpha$ satisfies the following random differential equation
\begin{equation}
\label{eq:3-2.4}
\frac{du^\alpha}{dt} + Au^\alpha = z(\theta_t \omega_{\ell^\alpha}) u^\alpha + G(\theta_t \omega_{\ell^\alpha}, u^\alpha),
\end{equation}
where $G(\omega_{\ell^\alpha}, u^\alpha) := e^{-z(\omega_{\ell^\alpha})} F(e^{z(\omega_{\ell^\alpha})} u^\alpha)$. We denote $u^\alpha(t, \omega_{\ell^\alpha}, x)$ by the solution of \eqref{eq:3-2.4} with initial condition $u^\alpha(0, \omega_{\ell^\alpha}, x)=x$. It is easy to verify that $G$ has the same Lipschitz constant $L$ as $F$. Obviously, the solution mapping $(t, \omega_{\ell^\alpha}, x) \mapsto u^\alpha(t, \omega_{\ell^\alpha}, x)$ generates a random dynamical system.\par
Similarly, let $u=e^{-z(\theta_t \omega)}v$, we get random differential equation
\begin{equation}
\label{eq:3-2.5}
\frac{du}{dt} + Au = z(\theta_t \omega) u + G(\theta_t \omega, u),
\end{equation}
where $G(\omega, u) := e^{-z(\omega)} F(e^{z(\omega)} u)$. The same conclusions as for equation \eqref{eq:3-2.4} also hold for equation \eqref{eq:3-2.5}.\par
For $x \in H$ and $\omega_{\ell^\alpha} \in \Omega$, we introduce the transformation
\begin{align*}
T(\omega_{\ell^\alpha}, x) = e^{-z(\omega_{\ell^\alpha})}x,
\end{align*}
and its inverse transformation
\begin{align*}
T^{-1}(\omega_{\ell^\alpha}, x) = e^{z(\omega_{\ell^\alpha})}x.
\end{align*}
Observe that
\begin{align*}
v^\alpha(t, \omega_{\ell^\alpha}, x)=T^{-1}(\theta_{t}\omega_{\ell^\alpha}, \cdot)\circ u^\alpha(t, \omega_{\ell^\alpha}, T(\omega_{\ell^\alpha}, x)),
\end{align*}
from which we find that the solution mapping $ (t, \omega_{\ell^\alpha}, x)\mapsto v^\alpha(t, \omega_{\ell^\alpha}, x)$ also generate a random dynamical system.\par
Recall that $-A$  is a generator of analytic semigroup $e^{-At}$. There exists constants $\kappa>0$ and $M\geq1$ such that
\begin{align}
\label{3-3.4}
\| A^{\sigma} e^{-At} \| \leq M t^{-\sigma} e^{-\kappa t}, \  \forall t> 0,
\end{align}
see \cite{Pazy83}.\par
Next we give the estimate of $u(t, \omega, x)$.
\begin{lem}
\label{lem:3-3.3}
For every $\omega \in \mathbb{W} \subset\Omega$ and $x \in H^{\sigma}$, there exists a positive constant $C$ depending on $T$, $\sigma$ and $x$ such that
\begin{align}
\label{3-3.5}
\sup_{t \in [0,T]} \|u(t,\omega,x)\|_{\sigma} \leq C.
\end{align}
\end{lem}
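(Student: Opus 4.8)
The plan is to work with the mild (variation-of-constants) form of the random equation \eqref{eq:3-2.5}. Writing $\omega$ for the Brownian sample path, the solution satisfies
\begin{align*}
u(t,\omega,x) = e^{-At}x + \int_0^t e^{-A(t-s)}\big[\, z(\theta_s\omega)\,u(s,\omega,x) + G(\theta_s\omega, u(s,\omega,x))\,\big]\,ds .
\end{align*}
Applying $A^{\sigma}$ and taking norms, I would estimate the three resulting contributions separately, with the aim of reducing everything to a closed inequality for $\psi(t):=\|u(t,\omega,x)\|_{\sigma}$.

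For the linear semigroup term, since $x\in H^{\sigma}=D(A^{\sigma})$ and $A^{\sigma}$ commutes with $e^{-At}$, one has $\|A^{\sigma}e^{-At}x\|=\|e^{-At}A^{\sigma}x\|\le \|x\|_{\sigma}$, using $\|e^{-At}\|\le e^{-\lambda_1 t}\le 1$ (Hypothesis \ref{hyp:2.1}). For the drift and nonlinear terms I would use the estimate \eqref{3-3.4}, namely $\|A^{\sigma}e^{-A(t-s)}\|\le M(t-s)^{-\sigma}e^{-\kappa(t-s)}$; the crucial point is that $\sigma<1$, so the kernel $(t-s)^{-\sigma}$ is integrable on $[0,t]$. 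The multiplicative factor $z(\theta_s\omega)$ is bounded on $[0,T]$: for $\omega\in\mathbb{W}$ the map $s\mapsto z(\theta_s\omega)$ is continuous (the Brownian analogue of Lemma \ref{lem:3-2.1}), so $C_z:=\sup_{s\in[0,T]}|z(\theta_s\omega)|<\infty$, and combining this with the elementary bound $\|u\|\le \lambda_1^{-\sigma}\|u\|_{\sigma}$ controls $z(\theta_s\omega)u(s,\omega,x)$ by $C_z\lambda_1^{-\sigma}\psi(s)$. Finally, because $G$ inherits the Lipschitz constant $L$ of $F$ and $G(\theta_s\omega,0)=0$ (as $F(0)=0$), Hypothesis \ref{hyp:2.2} and \eqref{3-2.1} give $\|G(\theta_s\omega,u(s,\omega,x))\|\le L\,\psi(s)$.

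Collecting these estimates yields, with $C':=M(C_z\lambda_1^{-\sigma}+L)$,
\begin{align*}
\psi(t)\le \|x\|_{\sigma} + C'\int_0^t (t-s)^{-\sigma}\psi(s)\,ds, \qquad t\in[0,T].
\end{align*}
This is a weakly singular integral inequality rather than a classical Gronwall inequality, and this is the step I expect to be the main obstacle: because of the singular kernel $(t-s)^{-\sigma}$ one cannot simply exponentiate. The resolution is to invoke the generalized (singular) Gronwall lemma of Henry \cite{Henry81}, which applies precisely when $0\le\sigma<1$ and produces a constant $C=C(T,\sigma,C',\|x\|_{\sigma})$ with $\psi(t)\le C$ for all $t\in[0,T]$. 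Since $C_z$ depends only on $\omega$ and $T$, this gives the claimed bound \eqref{3-3.5} with $C$ depending on $T$, $\sigma$ and $x$ (and on the fixed data $A$, $F$, $\omega$). A minor preliminary point is to ensure that $\psi$ is finite and that the integral identity is valid; this follows from local existence of the mild solution in $C([0,T],H^{\sigma})$, after which the a priori inequality upgrades the bound to all of $[0,T]$.
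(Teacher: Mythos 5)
Your proof is correct and follows essentially the same route as the paper: pass to the mild formulation, bound the nonlinearity via the Lipschitz property of $G$ (with $G(\cdot,0)=0$), and close the resulting weakly singular integral inequality with Henry's Lemma 7.1.1. The only cosmetic difference is that the paper absorbs the linear term $z(\theta_t\omega)u$ into the exponential weight $e^{-At+\int_0^t z(\theta_r\omega)\,dr}$ of the semigroup and bounds it through $\int_0^T|z(\theta_r\omega)|\,dr$, whereas you keep it as a perturbation controlled by $\lambda_1^{-\sigma}\sup_{s\in[0,T]}|z(\theta_s\omega)|$; both choices lead to the same singular Gronwall estimate and the same conclusion.
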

\begin{proof}
Following \eqref{eq:3-2.5}, we have
\begin{align*}
u(t,\omega,x) = e^{-A t + \int_0^t z(\theta_r \omega) dr} x + \int_0^t e^{-A(t-s)+\int_s^t z(\theta_r \omega) dr} G(\theta_s \omega, u(s,\omega,x)) ds.
\end{align*}
By Hypothesis \ref{hyp:2.2} and \eqref{3-3.4}, for all $t \in [0,T]$, we get
\begin{align*}
\|u(t,\omega,x)\|_{\sigma} \leq M e^{-\kappa t} N_{\omega,T} \|x\|_{\sigma} + M L N_{\omega,T} \int_0^t (t - s)^{-\sigma} e^{-\kappa(t-s)} \|u(s)\|_{\sigma} ds,
\end{align*}
where $ N_{\omega,T}= \int_0^T |z(\theta_r \omega)| dr$. Let
\begin{align*}
E_{\sigma}(x) &= \sum_{n=0}^{+\infty} x^{n(1-\sigma)}/\Gamma(n(1-\sigma) + 1), \  \text{for } x \geq 0,\\
\theta&= \left[  M L N_{\omega,T}\Gamma(1 - \sigma) \right]^{1 / (1 - \sigma)}.
\end{align*}
Here $\Gamma(\cdot)$ is the Gamma function. It then follows from \cite[Lemma 7.1.1]{Henry81} that
\begin{align*}
e^{\kappa t} \|u(t,\omega,x)\|_{\sigma} \leq e^{\kappa T} M N_{\omega,T} \|x\|_{\sigma} E_{\sigma}(\theta t), \  \forall t \in [0,T].
\end{align*}
Thus \eqref{3-3.5} holds. This completes the proof.
\end{proof}

\begin{lem}
\label{lem:3-3.4}
For every $\omega_{\ell^\alpha} \in \Omega$, $x \in H^{\sigma}$ and $T>0$, we have
\begin{align}
\label{3-3.6}
\sup_{t \in [0,T]} \|u^\alpha(t, \omega_{\ell^\alpha}, x)-u(t,\omega,x)\|_{\sigma} \rightarrow 0
\end{align}
in probability as $\alpha\rightarrow2$.
\end{lem}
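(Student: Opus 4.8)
The plan is to compare the two mild formulations coming from \eqref{eq:3-2.4} and \eqref{eq:3-2.5} and to close a singular Gronwall estimate on the difference $w(t):=u^\alpha(t,\omega_{\ell^\alpha},x)-u(t,\omega,x)$ in the $\|\cdot\|_\sigma$ norm. Writing $\zeta^\alpha(t)=\int_0^t z(\theta_r\omega_{\ell^\alpha})\,dr$ and $\zeta(t)=\int_0^t z(\theta_r\omega)\,dr$, subtraction of the variation-of-constants formulas produces a homogeneous piece $e^{-At}\bigl(e^{\zeta^\alpha(t)}-e^{\zeta(t)}\bigr)x$ together with an integral piece whose integrand I would split, by adding and subtracting, into the three parts
\begin{gather*}
e^{\zeta^\alpha(t)-\zeta^\alpha(s)}\bigl[G(\theta_s\omega_{\ell^\alpha},u^\alpha(s))-G(\theta_s\omega_{\ell^\alpha},u(s))\bigr],\\
e^{\zeta^\alpha(t)-\zeta^\alpha(s)}\bigl[G(\theta_s\omega_{\ell^\alpha},u(s))-G(\theta_s\omega,u(s))\bigr],\\
\bigl(e^{\zeta^\alpha(t)-\zeta^\alpha(s)}-e^{\zeta(t)-\zeta(s)}\bigr)G(\theta_s\omega,u(s)).
\end{gather*}
The purpose of this decomposition is that the unknown $u^\alpha$ enters only through the first part, and there only through $w$ itself; the other two parts are evaluated at the limiting solution $u$, which is bounded in $\|\cdot\|_\sigma$ on $[0,T]$ by Lemma \ref{lem:3-3.3}.

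For the $\sigma$-norm estimates I would apply $A^\sigma$ and use \eqref{3-3.4}, so that each integral term picks up the weakly singular kernel $(t-s)^{-\sigma}e^{-\kappa(t-s)}$. The homogeneous piece is bounded by $\|A^\sigma e^{-At}x\|\,|e^{\zeta^\alpha(t)}-e^{\zeta(t)}|\le\|x\|_\sigma\,|e^{\zeta^\alpha(t)}-e^{\zeta(t)}|$, using $x\in H^\sigma$ and $\|e^{-At}\|\le1$. The first integral part, by the Lipschitz property of $G$ (same constant $L$ as $F$, Hypothesis \ref{hyp:2.2}), is bounded by $L\,e^{\zeta^\alpha(t)-\zeta^\alpha(s)}\|w(s)\|_\sigma$ and becomes the Gronwall term. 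For the second part I would record the elementary continuity estimate $\|G(\theta_s\omega_{\ell^\alpha},v)-G(\theta_s\omega,v)\|\le C'\,L\,\|v\|_\sigma\,|z(\theta_s\omega_{\ell^\alpha})-z(\theta_s\omega)|$, obtained from $G(\cdot,v)=e^{-z}F(e^z v)$ by adding and subtracting $e^{-z_1}F(e^{z_2}v)$ and using that $F$ is Lipschitz; with $v=u(s)$ and Lemma \ref{lem:3-3.3} this part is controlled by $\sup_{r\in[0,T]}|z(\theta_r\omega_{\ell^\alpha})-z(\theta_r\omega)|$. The third part is bounded by $|e^{\zeta^\alpha(t)-\zeta^\alpha(s)}-e^{\zeta(t)-\zeta(s)}|\cdot L\|u(s)\|_\sigma$, and the inequality $|e^a-e^b|\le e^{|a|\vee|b|}|a-b|$ again reduces it to the same supremum of the $z$-difference.

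Collecting these bounds gives an inequality of the form
\[
\|w(t)\|_\sigma\le a^\alpha(\omega)+\bar M L\int_0^t (t-s)^{-\sigma}e^{-\kappa(t-s)}\|w(s)\|_\sigma\,ds,
\]
where $\bar M$ absorbs $M$ and $\sup_{0\le s\le t\le T}e^{\zeta^\alpha(t)-\zeta^\alpha(s)}$, and $a^\alpha(\omega)$ collects the homogeneous term and the second and third integral parts. Applying the singular Gronwall inequality \cite[Lemma 7.1.1]{Henry81}, exactly as in the proof of Lemma \ref{lem:3-3.3}, yields $\sup_{t\in[0,T]}\|w(t)\|_\sigma\le a^\alpha(\omega)\,e^{\kappa T}E_\sigma(\beta^\alpha T)$ for a Mittag--Leffler-type factor with $\beta^\alpha$ determined by $\bar M$.

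The hard part is then the passage to convergence in probability: both $a^\alpha(\omega)$ and the factor $E_\sigma(\beta^\alpha T)$ are random and depend on $\alpha$ through $\sup_{r\in[0,T]}|z(\theta_r\omega_{\ell^\alpha})|$, so no deterministic constant can be extracted. I would resolve this using Lemma \ref{lem:3-2.2}, which gives $\delta^\alpha:=\sup_{r\in[0,T]}|z(\theta_r\omega_{\ell^\alpha})-z(\theta_r\omega)|\to0$ in $L^p$, hence in probability. On the one hand, each constituent bound of $a^\alpha(\omega)$ is a continuous increasing function of $\sup_{r\in[0,T]}|z(\theta_r\omega_{\ell^\alpha})|$ multiplied by $\delta^\alpha$, so $a^\alpha(\omega)\to0$ in probability; on the other hand, the bound $\sup_{r\in[0,T]}|z(\theta_r\omega_{\ell^\alpha})|\le\sup_{r\in[0,T]}|z(\theta_r\omega)|+\delta^\alpha$ shows this supremum is tight, so $\bar M$ and $E_\sigma(\beta^\alpha T)$ are bounded in probability. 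The right-hand side then has the form (bounded in probability)$\times$(tending to $0$ in probability); the standard argument --- given $\eta,\varepsilon>0$, first choose $K$ so the tight factor exceeds $K$ with probability $<\eta$, then take $\alpha$ near $2$ so the null factor exceeds $\varepsilon/K$ with probability $<\eta$ --- gives $\sup_{t\in[0,T]}\|w(t)\|_\sigma\to0$ in probability, which is \eqref{3-3.6}.
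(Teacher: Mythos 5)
Your proposal is correct and follows essentially the same route as the paper: split the difference of the mild formulations into a Lipschitz-in-$u$ term (the Gronwall term) plus terms controlled by $\sup_{r\in[0,T]}|z(\theta_r\omega_{\ell^\alpha})-z(\theta_r\omega)|$, bound $\|u\|_\sigma$ via Lemma \ref{lem:3-3.3}, close with the singular Gronwall inequality of \cite[Lemma 7.1.1]{Henry81}, and convert to convergence in probability via Lemma \ref{lem:3-2.2}. The only cosmetic differences are that the paper uses a single integrating factor for the difference equation (so the homogeneous term $e^{-At}(e^{\zeta^\alpha(t)}-e^{\zeta(t)})x$ never appears, since $w(0)=0$) and concludes by the subsequence/sub-subsequence a.s.\ argument rather than your explicitly quantified tightness argument; both handle the $\alpha$-dependent random constants equally well.
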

\begin{proof}
Let $\tilde{u}^\alpha(t)=u^\alpha(t)-u(t)$, by \eqref{eq:3-2.4} and \eqref{eq:3-2.5}, we get
\begin{align*}
\tilde{u}^\alpha(t) =& \int_0^t e^{-A(t-s)+\int_s^t z(\theta_r \omega) dr} \Big[G(\theta_{s}\omega_{\ell^\alpha}, u^{\alpha}(s))-G(\theta_{s}\omega,u(s))\\&\ \ \  +\big(z(\theta_{s}\omega_{\ell^\alpha})-z(\theta_{s}\omega)\big)u(s)\Big]ds.
\end{align*}
It follows from \eqref{3-2.1} that
\begin{align*}
&\|G(\theta_{s}\omega_{\ell^\alpha}, u^{\alpha}(s))-G(\theta_{s}\omega,u(s))\|\leq L\|\tilde{u}^\alpha(s)\|_{\sigma}+2L|e^{z(\theta_{s}\omega_{\ell^\alpha})-z(\theta_{s}\omega)}-1| \|u(s)\|_{\sigma}.
\end{align*}
Along with \eqref{3-3.4} and Lemma \ref{lem:3-3.3}, we have
\begin{align*}
\|\tilde{u}^\alpha(t)\|_{\sigma} \leq&  M L \int_0^t (t - s)^{-\sigma} e^{-\kappa(t-s)}e^{\int_s^t z(\theta_r \omega) dr} \|\tilde{u}^\alpha(s)\|_{\sigma} ds\\
&+C M \int_0^t (t - s)^{-\sigma} e^{-\kappa(t-s)}e^{\int_s^t z(\theta_r \omega) dr} \big[2L|e^{z(\theta_{s}\omega_{\ell^\alpha})-z(\theta_{s}\omega)}-1| +|z(\theta_{s}\omega_{\ell^\alpha})-z(\theta_{s}\omega)|\big]ds.
\end{align*}
Applying \cite[Lemma 7.1.1]{Henry81}, we obtain that
\begin{align*}
 \|\tilde{u}^\alpha(t)\|_{\sigma} \leq&  CM N_{\omega_{\ell^\alpha},T} E_{\sigma}(\tilde{\theta} t)\\
&\times\int_0^t (t - s)^{-\sigma} e^{-\kappa(t-s)} \big[2L|e^{z(\theta_{s}\omega_{\ell^\alpha})-z(\theta_{s}\omega)}-1| +|z(\theta_{s}\omega_{\ell^\alpha})-z(\theta_{s}\omega)|\big]ds,
\end{align*}
where $ N_{\omega_{\ell^\alpha},T}= \int_0^T |z(\theta_r \omega_{\ell^\alpha})| dr$ and
$\tilde{\theta}= \left[  M L N_{\omega_{\ell^\alpha},T}\Gamma(1 - \sigma) \right]^{1 / (1 - \sigma)}$.\par
 By Lemma \ref{lem:3-2.2}, for each subsequence $\{\alpha_n\}_{n \in \mathbb{N}}$, there exists a sub-subsequence $\{\alpha_{n(k)}\}_{k \in \mathbb{N}}$ such that $z(\theta_s \omega_{\ell^{\alpha_{n(k)}}})$ converges to $z(\theta_s \omega)$ in $D_u([0, T], \mathbb{R})$, almost surely, i.e., for every $T > 0$,
\begin{align*}
\lim_{k \to \infty} \sup_{s \in [0, T]} | z(\theta_s \omega_{\ell^{\alpha_{n(k)}}}) - z(\theta_s \omega) | = 0,
\end{align*}
which implies that for every $\varepsilon > 0$, there exist $K>0$ such that for every $k\geq K$,
\begin{align*}
\sup_{s \in [0, T]}| z(\theta_s \omega_{\ell^{\alpha_{n(k)}}}) - z(\theta_s \omega) | \leq \varepsilon.
\end{align*}
This yields that
\begin{align*}
\sup_{t \in [0, T]} \|\tilde{u}^{\alpha_{n(k)}}(t)\|_{\sigma} \leq CM N_{\omega_{\ell^{\alpha_{n(k)}}},T} E_{\sigma}(\tilde{\theta} t)\int_0^t (t - s)^{-\sigma} e^{-\kappa(t-s)} \big(2L|e^{\varepsilon}-1| +\varepsilon\big)ds.
\end{align*}
Therefore,
\begin{align*}
\sup_{t \in [0, T]} \|u^{\alpha_{n(k)}}(t)-u(t)\|_{\sigma} \rightarrow 0, \ \text{almost surely}.
\end{align*}
This implies that
\begin{align*}
\sup_{t \in [0, T]} \|u^{\alpha}(t)-u(t)\|_{\sigma} \rightarrow 0
\end{align*}
in probability as $\alpha\rightarrow2$. The proof is complete.
\end{proof}
\begin{thm}
\label{th:3-3.1}
Assume Hypothesis \ref{hyp:2.1} and  \ref{hyp:2.2} hold. For every $\omega_{\ell^\alpha} \in \Omega$, $x \in H^{\sigma}$ and $T>0$, we have
\begin{align}
\label{3-3.7}
\sup_{t \in [0,T]} \|v^\alpha(t, \omega_{\ell^\alpha}, x)-v(t,\omega,x)\|_{\sigma} \rightarrow 0
\end{align}
in probability as $\alpha\rightarrow2$ and in $D_U([0,T], H^{\sigma})$ as well as $D_{J_1}([0,T], H^{\sigma})$.
\end{thm}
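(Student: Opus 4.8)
The plan is to transport the convergence of the auxiliary processes $u^{\alpha}$ established in Lemma \ref{lem:3-3.4} back to the original solutions through the conjugacy $v^{\alpha}=e^{z(\theta_t\omega_{\ell^\alpha})}u^{\alpha}$ and $v=e^{z(\theta_t\omega)}u$, and then to read off the two Skorokhod statements from the resulting uniform estimate. First I would write, for each fixed $t\in[0,T]$,
\begin{align*}
v^\alpha(t,\omega_{\ell^\alpha},x)-v(t,\omega,x)=e^{z(\theta_t\omega_{\ell^\alpha})}\big(u^\alpha(t)-u(t)\big)+\big(e^{z(\theta_t\omega_{\ell^\alpha})}-e^{z(\theta_t\omega)}\big)u(t),
\end{align*}
and, using the elementary inequality $|e^{a}-e^{b}|\le e^{a\vee b}|a-b|$, bound
\begin{align*}
\|v^\alpha(t)-v(t)\|_{\sigma}\le e^{z(\theta_t\omega_{\ell^\alpha})}\|u^\alpha(t)-u(t)\|_{\sigma}+e^{z(\theta_t\omega_{\ell^\alpha})\vee z(\theta_t\omega)}|z(\theta_t\omega_{\ell^\alpha})-z(\theta_t\omega)|\,\|u(t)\|_{\sigma}.
\end{align*}
Lemma \ref{lem:3-3.3} already furnishes $\sup_{t\in[0,T]}\|u(t)\|_{\sigma}\le C$, so the task reduces to controlling the two exponential prefactors together with the differences $\|u^\alpha(t)-u(t)\|_{\sigma}$ and $|z(\theta_t\omega_{\ell^\alpha})-z(\theta_t\omega)|$, uniformly in $t$.

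The main obstacle is that both Lemma \ref{lem:3-2.2} and Lemma \ref{lem:3-3.4} give only convergence in probability, whereas to bound the prefactor $e^{z(\theta_t\omega_{\ell^\alpha})}$ uniformly in $t$ I need an almost sure estimate; the non-Gaussian Ornstein--Uhlenbeck path $t\mapsto z(\theta_t\omega_{\ell^\alpha})$ is merely c\`adl\`ag and could have large excursions. I would circumvent this exactly as in the proof of Lemma \ref{lem:3-3.4}, via the subsequence criterion for convergence in probability. Given an arbitrary subsequence $\alpha_n\to2$, Lemmas \ref{lem:3-3.4} and \ref{lem:3-2.2} allow me to extract a sub-subsequence $\alpha_{n(k)}$ along which, almost surely, $\sup_{t\in[0,T]}\|u^{\alpha_{n(k)}}(t)-u(t)\|_{\sigma}\to0$ and $\sup_{t\in[0,T]}|z(\theta_t\omega_{\ell^{\alpha_{n(k)}}})-z(\theta_t\omega)|\to0$. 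Along this sub-subsequence the limit $t\mapsto z(\theta_t\omega)$ is continuous, hence bounded on the compact interval $[0,T]$, so $\sup_{t\in[0,T]}z(\theta_t\omega_{\ell^{\alpha_{n(k)}}})$ is bounded uniformly in $k$ once $k$ is large; thus both exponential prefactors are bounded. Feeding this, the bound $\sup_t\|u(t)\|_\sigma\le C$, and the two uniform convergences into the displayed estimate yields $\sup_{t\in[0,T]}\|v^{\alpha_{n(k)}}(t)-v(t)\|_{\sigma}\to0$ almost surely. Since every subsequence admits such a sub-subsequence, the full family converges, which is precisely \eqref{3-3.7}.

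Finally I would deduce the two topological statements from this uniform estimate. Convergence in $D_U([0,T],H^{\sigma})$ is literally convergence in the uniform metric $d_U$, which is exactly what \eqref{3-3.7} asserts. For $D_{J_1}([0,T],H^{\sigma})$ it suffices to note that choosing the identity time change $\lambda=\mathrm{id}$ in the definition of the Skorokhod metric makes the distortion term $\sup_{s\ne t}|\log((\lambda(s)-\lambda(t))/(s-t))|$ vanish, so that $d_{J_1}(v^\alpha,v)\le d_U(v^\alpha,v)=\sup_{t\in[0,T]}\|v^\alpha(t)-v(t)\|_{\sigma}$; hence $J_1$-convergence in probability follows from the uniform convergence already proved. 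The one point that still requires care is the initial datum in the conjugacy: since $v$ and $v^\alpha$ share the value $x$ at $t=0$, the corresponding auxiliary processes start from $e^{-z(\omega_{\ell^\alpha})}x$ and $e^{-z(\omega)}x$, which differ. This gap is closed by continuous dependence of $u^\alpha$ and $u$ on the initial datum (a Henry-type Gr\"onwall estimate parallel to Lemma \ref{lem:3-3.3}) together with $e^{-z(\omega_{\ell^\alpha})}x\to e^{-z(\omega)}x$, after which Lemma \ref{lem:3-3.4} applies to matched data and the argument above goes through unchanged.
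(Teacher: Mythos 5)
Your proposal is correct and follows essentially the same route as the paper: transport Lemma \ref{lem:3-3.4} through the conjugacy $v^{\alpha}=e^{z(\theta_t\omega_{\ell^\alpha})}u^{\alpha}(t,\omega_{\ell^\alpha},e^{-z(\omega_{\ell^\alpha})}x)$, use the subsequence criterion to upgrade to almost sure uniform convergence along a sub-subsequence, and bound the exponential prefactors on $[0,T]$. You are in fact slightly more careful than the paper on two points it leaves implicit — the mismatch of initial data $e^{-z(\omega_{\ell^\alpha})}x$ versus $e^{-z(\omega)}x$ (handled by continuous dependence, which the paper absorbs into its three-term splitting without comment) and the observation that $d_{J_1}\le d_U$ via the identity time change, which justifies the $J_1$ claim the paper states but does not argue.
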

\begin{proof}
As in Lemma \ref{lem:3-3.4}, for each subsequence $\{\alpha_n\}_{n \in \mathbb{N}}$, there exists a sub-subsequence $\{\alpha_{n(k)}\}_{k \in \mathbb{N}}$ such that
\begin{align*}
\sup_{t \in [0, T]} | z(\theta_t \omega_{\ell^{\alpha_{n(k)}}}) - z(\theta_t \omega) | \rightarrow 0
\end{align*}
and
\begin{align*}
\sup_{t \in [0, T]} \|u^{\alpha_{n(k)}}(t, \omega_{\ell^{\alpha_{n(k)}}}, x)-u(t, \omega, x)\|_{\sigma} \rightarrow 0, \ \text{almost surely}.
\end{align*}
Also we can get
\begin{align*}
\sup_{t \in [0, T]} | e^{z(\theta_t \omega_{\ell^{\alpha_{n(k)}}})} - e^{z(\theta_t \omega)} | \rightarrow 0, \ \text{almost surely}.
\end{align*}
Notice that
\begin{align*}
 &\|v^{\alpha_{n(k)}}(t, \omega_{\ell^{\alpha_{n(k)}}}, x)-v(t, \omega, x)\|_{\sigma} \\&= \|e^{z(\theta_t \omega_{\ell^{\alpha_{n(k)}}})}u^{\alpha_{n(k)}}(t, \omega_{\ell^{\alpha_{n(k)}}}, e^{-z(\omega_{\ell^{\alpha_{n(k)}}})}x)-e^{z(\theta_t \omega)}u(t, \omega, e^{-z(\theta_t \omega)}x)\|_{\sigma}\\&
 \leq | e^{z(\theta_t \omega_{\ell^{\alpha_{n(k)}}})} | \Big[\|u^{\alpha_{n(k)}}(t, \omega_{\ell^{\alpha_{n(k)}}}, e^{-z(\omega_{\ell^{\alpha_{n(k)}}})}x)-u(t, \omega, e^{-z(\omega_{\ell^{\alpha_{n(k)}}})}x)\|_{\sigma}\\ &\ \ \ \ \ +|\|u(t, \omega, e^{-z(\omega_{\ell^{\alpha_{n(k)}}})}x)-u(t, \omega, e^{-z(\omega)}x)\|_{\sigma}\Big]\\&
 \ \ \ +| e^{z(\theta_t \omega_{\ell^{\alpha_{n(k)}}})} - e^{z(\theta_t \omega)}| \|u(t, \omega, e^{-z(\omega)}x)\|_{\sigma},
\end{align*}
which along with Lemma \ref{lem:3-3.3} and \ref{lem:3-3.4}, we have
\begin{align*}
\sup_{t \in [0, T]} \|v^{\alpha_{n(k)}}(t, \omega_{\ell^{\alpha_{n(k)}}}, x)-v(t, \omega, x)\|_{\sigma} \rightarrow 0, \ \text{almost surely}
\end{align*}
in $D_U([0,T], H^{\sigma})$. Therefore,
\begin{align*}
\sup_{t \in [0,T]} \|v^\alpha(t, \omega_{\ell^\alpha}, x)-v(t,\omega,x)\|_{\sigma} \rightarrow 0
\end{align*}
in probability as $\alpha\rightarrow2$ and in $D_U([0,T], H^{\sigma})$.
\end{proof}

\section{Convergence of inertial manifolds}
\indent
\par
In this section, we first prove the existence and $C^{1}$-smoothness of inertial manifolds for equation \eqref{eq:3-2.4} and equation \eqref{eq:3-2.5}. Then we show the convergence of $C^{1}$ inertial manifolds for equation \eqref{eq:3-2.4} and equation \eqref{eq:3-2.5} as $\alpha\rightarrow 2$.\par

Let $\mathcal{C}_{\beta}^{\alpha,-}$ be a Banach space defined by
\begin{align*}
\mathcal{C}_{\beta}^{\alpha,-} = \Big\{ \varphi\in C((-\infty, 0], D(A^{\sigma})) \ \big| \sup_{t\leq0} \big\{e^{\beta t - \int_0^t z(\theta_r \omega_{\ell^\alpha}) dr} \|\varphi(t)\|_{\sigma} \big\} < +\infty \Big\}
\end{align*}
with the norm
\begin{align*}
\|\varphi\|_{\mathcal{C}_{\beta}^{\alpha,-}} = \sup_{t\leq0} \big\{e^{\beta t - \int_0^t z(\theta_r \omega_{\ell^\alpha}) dr} \|\varphi(t)\|_{\sigma}\big\}.
\end{align*}
Here $\beta:=\lambda_N+\frac{2}{\mu}L\lambda_N^{\sigma}$, $\mu \in (0,1)$, which is in $(\lambda_N,\lambda_{N+1})$. We denote the set
\begin{align*}
\mathscr{M}^{\alpha}(\omega_{\ell^\alpha})= \big\{ x \in D(A^{\sigma}) \ \big| \ u^\alpha(t, \omega_{\ell^\alpha}, x) \in \mathcal{C}_{\beta}^{\alpha,-} \big\}.
\end{align*}
In order to prove that $\mathscr{M}^{\alpha}(\omega_{\ell^\alpha})$ is a inertial manifold, we first need the following lemma.
\begin{lem}
\label{lem:3-4.1}
$x \in \mathscr{M}^{\alpha}(\omega_{\ell^\alpha})$ if and only if there exists a function $u^\alpha(\cdot) \in \mathcal{C}_{\beta}^{\alpha,-}$ with $u^\alpha(0) = x$ and satisfies
\begin{align}
\label{3-4.1}
u^\alpha(t) &= e^{-A t + \int_0^t z(\theta_r \omega_{\ell^\alpha}) dr} \xi
+ \int_0^t e^{-A(t-s) + \int_s^t z(\theta_r \omega_{\ell^\alpha}) dr} P G(\theta_s \omega_{\ell^\alpha}, u^\alpha(s)) ds \notag\\
&\qquad
+ \int_{-\infty}^t e^{-A(t-s) + \int_s^t z(\theta_r \omega_{\ell^\alpha}) dr} Q G(\theta_s \omega_{\ell^\alpha}, u^\alpha(s)) ds,
\end{align}
where $\xi = P x$.
\end{lem}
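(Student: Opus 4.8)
The plan is to recognize that \eqref{3-4.1} is precisely the variation-of-constants (Duhamel) representation of the backward orbit of \eqref{eq:3-2.4}, except that the stable component $Q u^\alpha$ is anchored at $t=-\infty$ instead of at $t=0$; membership of $x$ in $\mathscr{M}^{\alpha}(\omega_{\ell^\alpha})$ is exactly the condition that legitimizes this anchoring. I would therefore prove the two implications separately, the decisive ingredient in both being the spectral gap encoded in $\beta\in(\lambda_N,\lambda_{N+1})$.

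For necessity, assume $x\in\mathscr{M}^{\alpha}(\omega_{\ell^\alpha})$, so $u^\alpha(t):=u^\alpha(t,\omega_{\ell^\alpha},x)$ is defined for $t\le0$, satisfies $u^\alpha(0)=x$, and lies in $\mathcal{C}_{\beta}^{\alpha,-}$. As a mild solution of \eqref{eq:3-2.4} it obeys, for any $t_0\le t\le0$,
\[
u^\alpha(t) = e^{-A(t-t_0)+\int_{t_0}^t z(\theta_r\omega_{\ell^\alpha})dr}u^\alpha(t_0) + \int_{t_0}^t e^{-A(t-s)+\int_s^t z(\theta_r\omega_{\ell^\alpha})dr}G(\theta_s\omega_{\ell^\alpha},u^\alpha(s))\,ds.
\]
Applying $P$ and fixing $t_0=0$ reproduces the first two terms of \eqref{3-4.1} with $\xi=Px$, since the $P$-integral runs over a finite interval and causes no difficulty. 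Applying $Q$ and letting $t_0\to-\infty$ is the substantive step: I would show the boundary term vanishes by using the second estimate in \eqref{3-2.2} to bound $\|e^{-A(t-t_0)}Q u^\alpha(t_0)\|_\sigma\le e^{-\lambda_{N+1}(t-t_0)}\|u^\alpha(t_0)\|_\sigma$, then invoking the definition of the $\mathcal{C}_{\beta}^{\alpha,-}$-norm, which gives $\|u^\alpha(t_0)\|_\sigma\le \|u^\alpha\|_{\mathcal{C}_{\beta}^{\alpha,-}}\,e^{-\beta t_0+\int_0^{t_0}z\,dr}$. The additivity $\int_{t_0}^t z+\int_0^{t_0}z=\int_0^t z$ collapses the $z$-factors into a constant (for fixed $t$), leaving a bound proportional to $e^{(\lambda_{N+1}-\beta)t_0}$, which tends to $0$ because $\beta<\lambda_{N+1}$. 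The same exponential control, now combined with the third estimate in \eqref{3-2.2} to handle the integrable singularity $(t-s)^{-\sigma}$ near $s=t$ (recall $\sigma<1$) and the Lipschitz bound $\|G(\theta_s\omega_{\ell^\alpha},u^\alpha(s))\|\le L\|u^\alpha(s)\|_\sigma$, shows that the improper $Q$-integral over $(-\infty,t]$ converges absolutely. Letting $t_0\to-\infty$ and adding the $P$- and $Q$-parts yields \eqref{3-4.1}.

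For sufficiency, suppose $u^\alpha(\cdot)\in\mathcal{C}_{\beta}^{\alpha,-}$ solves \eqref{3-4.1} with $u^\alpha(0)=x$. I would verify that $u^\alpha$ is a mild solution of \eqref{eq:3-2.4} on $(-\infty,0]$: differentiating the $P$-integral anchored at $0$ and the (convergent) $Q$-integral anchored at $-\infty$ each produces the vector field $-Au^\alpha+z(\theta_t\omega_{\ell^\alpha})u^\alpha+G$, so $u^\alpha$ satisfies the evolution equation, and the decomposition forces $Pu^\alpha(0)=\xi=Px$ while the assumption $u^\alpha(0)=x$ fixes the $Q$-component consistently. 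By uniqueness of mild solutions for the Lipschitz nonlinearity $G$ (Hypothesis \ref{hyp:2.2}), $u^\alpha(\cdot)$ coincides with the backward orbit $u^\alpha(\cdot,\omega_{\ell^\alpha},x)$; since it lies in $\mathcal{C}_{\beta}^{\alpha,-}$ by hypothesis, $x\in\mathscr{M}^{\alpha}(\omega_{\ell^\alpha})$.

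The hard part will be the necessity direction, specifically establishing that the boundary term $e^{-A(t-t_0)+\int_{t_0}^t z\,dr}Qu^\alpha(t_0)$ vanishes and that the $Q$-integral over $(-\infty,t]$ converges. Both rest entirely on the strict inequality $\beta<\lambda_{N+1}$ furnished by the gap $\lambda_N<\lambda_{N+1}$, together with careful bookkeeping of the random factors $e^{\int_s^t z(\theta_r\omega_{\ell^\alpha})dr}$, which must be absorbed into the $\mathcal{C}_{\beta}^{\alpha,-}$-weight rather than estimated crudely. Everything else is routine manipulation of the variation-of-constants formula and the standard uniqueness argument.
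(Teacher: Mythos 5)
Your proposal is correct and follows essentially the same route as the paper: project the variation-of-constants formula with $P$ (anchored at $0$) and $Q$ (anchored at $\tau$), kill the boundary term as $\tau\to-\infty$ via $\|e^{-At}Q\|\le e^{-\lambda_{N+1}t}$ together with the $\mathcal{C}_{\beta}^{\alpha,-}$-weight and $\beta<\lambda_{N+1}$, and obtain the converse by direct verification. Your write-up is in fact somewhat more careful than the paper's on the absolute convergence of the improper $Q$-integral and on the uniqueness argument in the sufficiency direction, which the paper dismisses as ``direct computation.''
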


\begin{proof}
Let $x \in \mathscr{M}^{\alpha}(\omega_{\ell^\alpha})$, by the variation of constants formula, for $\tau > t$, we have that
\begin{align*}
P u^\alpha(t, \omega_{\ell^\alpha}, x) =& e^{-A t + \int_0^t z(\theta_r \omega_{\ell^\alpha}) dr} P x + \int_0^t e^{-A(t-s) + \int_s^t z(\theta_r \omega_{\ell^\alpha}) dr} P G(\theta_s \omega_{\ell^\alpha}, u^\alpha(s)) ds,
\end{align*}
and
\begin{align*}
Q u^\alpha(t, \omega_{\ell^\alpha}, x) =& e^{-A(t-\tau) + \int_s^t z(\theta_r \omega_{\ell^\alpha}) dr} Q u^\alpha(\tau, \omega_{\ell^\alpha}, x)
\nonumber\\&+ \int_\tau^t e^{-A(t-s) + \int_s^t z(\theta_r \omega_{\ell^\alpha}) dr} Q G(\theta_s \omega_{\ell^\alpha}, u^\alpha(s)) ds.
\end{align*}
From \eqref{3-2.2}, we get
\begin{align*}
&\|e^{-A(t-\tau)+\int_{\tau}^{t}z(\theta_{r}\omega_{\ell^\alpha})dr}Q u^\alpha(\tau, \omega_{\ell^\alpha}, x)\|_{\sigma}\\&\leq e^{-\lambda_{N+1} (t-\tau)}e^{-\beta\tau+\int_{0}^{t}z(\theta_{r}\omega_{\ell^\alpha})dr}\| u^\alpha\|_{{\mathcal{C}_{\beta}^{\alpha,-}}}\rightarrow0, \ as \ \tau\rightarrow-\infty.
\end{align*}
Then we have
\begin{align*}
Q u^\alpha(t, \omega_{\ell^\alpha}, x) = \int_{-\infty}^t e^{-A(t-s) + \int_s^t z(\theta_r \omega_{\ell^\alpha}) dr} Q G(\theta_s \omega_{\ell^\alpha}, u^\alpha(s)) ds.
\end{align*}
Thus \eqref{3-4.1} can be obtained. The converse follows by direct computation.
\end{proof}

\begin{thm}
\label{th:3-4.1}
Assume that the spectral gap condition
\begin{align}
\label{3-4.2}
\lambda_{N+1}-\lambda_{N}\geq\frac{2L}{\mu}\left( \lambda_{N}^{\sigma}+\sigma^{\sigma}\Gamma(1-\sigma)(\lambda_{N+1}-\lambda_{N})^{\sigma}+\lambda_{N+1}^{\sigma}\right)
\end{align}
holds for some $\mu \in(0,1)$. Then there exists a Lipschitz inertial manifold for equation \eqref{eq:3-2.4}, which is given by
\begin{align*}
 \mathscr{M}^{\alpha}(\omega_{\ell^\alpha})=\big\lbrace \xi+\psi^{\alpha}(\omega_{\ell^\alpha}, \xi)|\xi\in PD(A^{\sigma})\big\rbrace,
\end{align*}
where $\psi^{\alpha}(\omega_{\ell^\alpha}, \cdot): PD(A^{\sigma})\rightarrow QD(A^{\sigma})$ is Lipschitz continuous, and $\psi^{\alpha}$ is measurable in $(\omega_{\ell^\alpha}, \xi)$. Furthermore, $\widetilde{\mathscr{M}}^{\alpha}(\omega_{\ell^\alpha})=T^{-1}(\omega_{\ell^\alpha}, \mathscr{M}^{\alpha}(\omega_{\ell^\alpha}))$, i.e.,
\begin{align*}
\widetilde{\mathscr{M}}^{\alpha}(\omega_{\ell^\alpha})=\left\lbrace \xi+e^{z(\omega_{\ell^\alpha})}\psi^{\alpha}(\omega_{\ell^\alpha}, e^{-z(\omega_{\ell^\alpha})}\xi)|\xi\in PD(A^{\sigma})\right\rbrace
\end{align*}
is the Lipschitz inertial manifold of equation \eqref{eq:3-1.2}. \par
In addition, if $F\in C^{1}$, then $\mathscr{M}^{\alpha}( \omega_{\ell^\alpha})$ is a $C^{1}$ inertial manifold for equation \eqref{eq:3-2.4}, i.e., $\psi^{\alpha}(\omega_{\ell^\alpha}, \xi)$ is $C^{1}$ in $\xi$. Consequently, $\widetilde{\mathscr{M}}^{\alpha}(\omega_{\ell^\alpha})$ is a $C^{1}$ inertial manifold of equation \eqref{eq:3-1.2}.
\end{thm}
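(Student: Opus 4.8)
The plan is to apply the Lyapunov--Perron method to the integral characterization in Lemma \ref{lem:3-4.1}. Fixing $\omega_{\ell^\alpha} \in \Omega$ and $\xi \in PD(A^\sigma)$, I would define an operator $\mathcal{T} = \mathcal{T}_{\omega_{\ell^\alpha}, \xi}$ on $\mathcal{C}_\beta^{\alpha,-}$ by letting $(\mathcal{T}\varphi)(t)$ be the right-hand side of \eqref{3-4.1} with $u^\alpha$ replaced by $\varphi$. First I would check that $\mathcal{T}$ maps $\mathcal{C}_\beta^{\alpha,-}$ into itself: using the three bounds in \eqref{3-2.2}, the Lipschitz condition \eqref{3-2.1} on $G$ together with $G(\theta_s\omega_{\ell^\alpha},0)=0$, and the definition of the weighted norm, each of the three terms in \eqref{3-4.1} is finite in $\|\cdot\|_{\mathcal{C}_\beta^{\alpha,-}}$. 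The crucial estimate is contractivity: subtracting, for $\varphi_1,\varphi_2 \in \mathcal{C}_\beta^{\alpha,-}$, and applying the Lipschitz constant $L$ gives
\[
\|\mathcal{T}\varphi_1 - \mathcal{T}\varphi_2\|_{\mathcal{C}_\beta^{\alpha,-}} \leq \kappa_0\,\|\varphi_1 - \varphi_2\|_{\mathcal{C}_\beta^{\alpha,-}},
\]
where the $P$-part contributes $L\lambda_N^\sigma/(\beta-\lambda_N)=\mu/2$ (by the choice $\beta=\lambda_N+\tfrac{2}{\mu}L\lambda_N^\sigma$) and the $Q$-part contributes a term built from $\sigma^\sigma\Gamma(1-\sigma)(\lambda_{N+1}-\lambda_N)^\sigma$ and $\lambda_{N+1}^\sigma$; the spectral gap condition \eqref{3-4.2} is precisely what forces $\kappa_0 \leq \mu < 1$. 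The Banach fixed point theorem then produces a unique fixed point $u^\alpha(\cdot,\omega_{\ell^\alpha},\xi) \in \mathcal{C}_\beta^{\alpha,-}$, and I set $\psi^\alpha(\omega_{\ell^\alpha},\xi) := Q u^\alpha(0,\omega_{\ell^\alpha},\xi)$, so that by Lemma \ref{lem:3-4.1} the set $\mathscr{M}^\alpha(\omega_{\ell^\alpha})$ is exactly the graph of $\psi^\alpha(\omega_{\ell^\alpha},\cdot)$.

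Next I would establish Lipschitz dependence on $\xi$ and measurability. Comparing the fixed points for two initial data $\xi_1,\xi_2$ and reusing the contraction bound yields
\[
\|u^\alpha(\cdot,\xi_1)-u^\alpha(\cdot,\xi_2)\|_{\mathcal{C}_\beta^{\alpha,-}} \leq \frac{1}{1-\mu}\,\|e^{-At+\int_0^t z\,dr}(\xi_1-\xi_2)\|_{\mathcal{C}_\beta^{\alpha,-}},
\]
from which Lipschitz continuity of $\psi^\alpha(\omega_{\ell^\alpha},\cdot)$ follows by restricting to $t=0$ and applying $Q$. Joint measurability in $(\omega_{\ell^\alpha},\xi)$ comes from writing $u^\alpha$ as the locally uniform limit of the Picard iterates $\mathcal{T}^n 0$, each of which is measurable, so the limit is measurable.

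I would then verify that $\mathscr{M}^\alpha$ is a genuine inertial manifold. Invariance $u^\alpha(t,\omega_{\ell^\alpha},\mathscr{M}^\alpha(\omega_{\ell^\alpha})) \subset \mathscr{M}^\alpha(\theta_t\omega_{\ell^\alpha})$ follows because a time-shifted orbit that lies in $\mathcal{C}_\beta^{\alpha,-}$ again belongs to the corresponding space over $\theta_t\omega_{\ell^\alpha}$, using the cocycle property. For exponential attraction I would prove the stronger \emph{asymptotic completeness}: given $u_0 \in H^\sigma$, a second Lyapunov--Perron fixed point — now matching the forward solution from $u_0$ — produces $\tilde u_0 \in \mathscr{M}^\alpha(\omega_{\ell^\alpha})$, and the gap condition again supplies the decay rate $b$ and constant $C$. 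Finally, since $T^{-1}(\omega_{\ell^\alpha},\cdot)=e^{z(\omega_{\ell^\alpha})}(\cdot)$ conjugates the random dynamical system of \eqref{eq:3-2.4} to that of \eqref{eq:3-1.2}, the image $\widetilde{\mathscr{M}}^\alpha(\omega_{\ell^\alpha})=T^{-1}(\omega_{\ell^\alpha},\mathscr{M}^\alpha(\omega_{\ell^\alpha}))$ is the Lipschitz inertial manifold for \eqref{eq:3-1.2}, and applying $T^{-1}$ to the points $\xi+\psi^\alpha(\omega_{\ell^\alpha},\xi)$ gives its stated graph representation.

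The main obstacle is the $C^1$ regularity. Assuming $F \in C^1$, so that $G(\theta_s\omega_{\ell^\alpha},\cdot)$ is $C^1$ with $\|D_uG\|\leq L$, I would differentiate \eqref{3-4.1} formally in $\xi$ to obtain a linear integral equation for the candidate derivative $U(t):=D_\xi u^\alpha(t)$, of the same structural form as \eqref{3-4.1} but with inhomogeneous term $e^{-At+\int_0^t z\,dr}P$ and with $G(\theta_s\omega_{\ell^\alpha},u^\alpha(s))$ replaced by the linear action $D_uG(\theta_s\omega_{\ell^\alpha},u^\alpha(s))\,U(s)$. Because $\|D_uG\|\leq L$, the associated operator $\mathcal{T}'$ obeys the same contraction estimate with constant $\leq \mu<1$ and thus has a unique fixed point $U^\alpha$. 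The delicate point — the crux of the whole theorem — is to show that $U^\alpha$ is the genuine Fr\'echet derivative of $\xi\mapsto u^\alpha(\cdot,\omega_{\ell^\alpha},\xi)$ in $\mathcal{C}_\beta^{\alpha,-}$, and not merely a formal solution. I would resolve this using the fiber contraction theorem: on $\mathcal{C}_\beta^{\alpha,-}\times\mathcal{C}_\beta^{\alpha,-}$ consider the map $(\varphi,\Phi)\mapsto(\mathcal{T}\varphi,\mathcal{T}'_\varphi\Phi)$, observe that the base map $\mathcal{T}$ is a contraction with fixed point $u^\alpha$ while each fiber map is a uniform contraction, and verify the continuity of $\varphi\mapsto\mathcal{T}'_\varphi$ linking the two levels. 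The theorem then forces the iterates to converge to $(u^\alpha,U^\alpha)$ with $U^\alpha=D_\xi u^\alpha$, and a standard difference-quotient estimate confirms $\xi\mapsto u^\alpha$ is $C^1$. Evaluating at $t=0$ and applying $Q$ shows $\psi^\alpha(\omega_{\ell^\alpha},\cdot)$ is $C^1$, and transporting through $T^{-1}$ yields the $C^1$ inertial manifold $\widetilde{\mathscr{M}}^\alpha$ for \eqref{eq:3-1.2}.
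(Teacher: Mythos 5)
Your proposal is correct and, for everything up to smoothness, follows the same Lyapunov--Perron route as the paper: the contraction of the integral operator on $\mathcal{C}_\beta^{\alpha,-}$ with the $P$-part contributing exactly $L\lambda_N^\sigma/(\beta-\lambda_N)=\mu/2$ and the $Q$-part controlled by the gap condition, the graph map $\psi^\alpha(\omega_{\ell^\alpha},\xi)=Q\bar u^\alpha(0,\omega_{\ell^\alpha},\xi)$, asymptotic completeness via a second fixed-point problem on the forward space $\mathcal{C}_\beta^{\alpha,+}$, and transport through $T^{-1}$ --- all match Steps 1--4 of the paper (the paper gets measurability of the random set from a countable dense subset and \cite[Theorem III.14]{Castaing77} rather than from measurability of Picard iterates, but both are standard). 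The genuine divergence is the $C^1$ step. You invoke the fiber contraction theorem on $(\varphi,\Phi)\mapsto(\mathcal{T}\varphi,\mathcal{T}'_\varphi\Phi)$, whereas the paper proves Fr\'echet differentiability directly: it fixes the formal linearization $\mathcal{G}$ at $\xi_0$, inverts $id-\mathcal{G}$, and shows the remainder $\mathcal{H}$ is $o(\|\xi-\xi_0\|_\sigma)$ by splitting the time integrals into a far tail (made small by choosing $N_1,N_2$ large) and a compact near part (made small by uniform continuity of $D_uG$ along the orbit). Both routes work, and both must confront the same technical point that you should not gloss over: the linearized operator is a uniform contraction in the $\beta$-weighted norm, but the continuity of $\varphi\mapsto\mathcal{T}'_\varphi$ (in your setup) and the smallness of the remainder (in the paper's) only hold after sacrificing some exponential weight --- this is exactly why the paper introduces the nested scale $\mathcal{C}_{\beta-2\delta}^{\alpha,-}\subset\mathcal{C}_{\beta-\delta}^{\alpha,-}\subset\mathcal{C}_\beta^{\alpha,-}$ with the gap condition re-verified at the shifted exponents $\beta-j\delta$. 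Your sketch does not mention this weight shift; if you carry out the fiber contraction argument in the single space $\mathcal{C}_\beta^{\alpha,-}$ the continuity of the fiber maps will fail, so you need the same $\delta$-perturbed spaces. With that amendment your approach is a clean, slightly more conceptual alternative to the paper's hands-on estimate; what the paper's version buys is that the same four-term decomposition is reused verbatim to prove continuity of $D_\xi\bar u^\alpha$ in $\xi$, which you would obtain for free from the fiber contraction theorem.
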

\begin{proof} We prove the theorem in five steps.\\
$\mathbf{Step\ 1}$. We first prove that equation \eqref{3-4.1} has a unique solution $u^{\alpha}=u^{\alpha}(\cdot, \omega_{\ell^\alpha}, \xi)$ in $\mathcal{C}_{\beta}^{\alpha,-}$. Denote the right-hand side of \eqref{3-4.1} by $\mathcal{Y}(u^{\alpha}, \omega_{\ell^\alpha}, \xi)$. Multiplying both side of \eqref{3-4.1} by $e^{\beta t - \int_0^t z(\theta_r \omega_{\ell^\alpha}) dr}$, it follows from \eqref{3-2.1} and \eqref{3-2.2} that
\begin{align*}
&e^{\beta t - \int_0^t z(\theta_r \omega_{\ell^\alpha}) dr}\Big\|\mathcal{Y}(u^{\alpha}, \omega_{\ell^\alpha}, \xi)\Big\|_{\sigma}\\
&\leq e^{(\beta-\lambda_{N}) t}\|\xi\|_{\sigma}+L\lambda_{N}^{\sigma}\int_t^0e^{(\beta-\lambda_{N})(t-s)}\|u^{\alpha}\|_{\mathcal{C}_{\beta}^{\alpha,-}}ds
\\&\ \ \ + L\int_{-\infty}^t \Big[ \Big( \frac{\sigma}{t-s} \Big)^{\sigma} + \lambda_{N+1}^{\sigma} \Big] e^{(\beta-\lambda_{N+1})(t-s)}\|u^{\alpha}\|_{\mathcal{C}_{\beta}^{\alpha,-}}ds \\
&\leq \|\xi\|_{\sigma}+L\left( \frac{\lambda_{N}^{\sigma}}{\beta-\lambda_{N}}+\frac{\sigma^{\sigma}\Gamma(1-\sigma)}{(\lambda_{N+1}-\beta)^{1-\sigma}}+\frac{\lambda_{N+1}^{\sigma}}{\lambda_{N+1}-\beta}\right)\|u^{\alpha}\|_{\mathcal{C}_{\beta}^{\alpha,-}},
\end{align*}
which implies that $\mathcal{Y}(u^{\alpha}, \omega_{\ell^\alpha}, \xi)$ maps $\mathcal{C}_{\beta}^{\alpha,-}$ into itself.\par
For any $u_{1}^{\alpha},u_{2}^{\alpha}\in \mathcal{C}_{\beta}^{\alpha,-} $, we have
\begin{align*}
&\Big\|\mathcal{Y}(u_{1}^{\alpha}, \omega_{\ell^\alpha}, \xi)-\mathcal{Y}(u_{2}^{\alpha}, \omega_{\ell^\alpha}, \xi)\Big\|_{\mathcal{C}_{\beta}^{\alpha,-}}\\
&\leq L\lambda_{N}^{\sigma}\int_t^0e^{(\beta-\lambda_{N})(t-s)}ds \|u_{1}^{\alpha}-u_{2}^{\alpha}\|_{\mathcal{C}_{\beta}^{\alpha,-}}
\\&\ \ \ + L\int_{-\infty}^t \Big[ \Big( \frac{\sigma}{t-s} \Big)^{\sigma} + \lambda_{N+1}^{\sigma} \Big] e^{(\beta-\lambda_{N+1})(t-s)}ds \|u_{1}^{\alpha}-u_{2}^{\alpha}\|_{\mathcal{C}_{\beta}^{\alpha,-}}\\
&\leq L\left( \frac{\lambda_{N}^{\sigma}}{\beta-\lambda_{N}}+\frac{\sigma^{\sigma}\Gamma(1-\sigma)}{(\lambda_{N+1}-\beta)^{1-\sigma}}+\frac{\lambda_{N+1}^{\sigma}}{\lambda_{N+1}-\beta}\right)\|u_{1}^{\alpha}-u_{2}^{\alpha}\|_{\mathcal{C}_{\beta}^{\alpha,-}}.
\end{align*}
Since $\lambda_{N+1}-\beta\leq\lambda_{N+1}-\lambda_{N}$, we find that
\begin{align*}
L\left( \frac{\lambda_{N}^{\sigma}}{\beta-\lambda_{N}}+\frac{\sigma^{\sigma}\Gamma(1-\sigma)}{(\lambda_{N+1}-\beta)^{1-\sigma}}+\frac{\lambda_{N+1}^{\sigma}}{\lambda_{N+1}-\beta}\right)
&\leq L\left( \frac{\lambda_{N}^{\sigma}}{\beta-\lambda_{N}}+\frac{\sigma^{\sigma}\Gamma(1-\sigma)(\lambda_{N+1}-\lambda_{N})^{\sigma}+\lambda_{N+1}^{\sigma}}{\lambda_{N+1}-\beta}\right)
\\ &\leq\mu<1.
\end{align*}
Thus, \eqref{3-4.2} implies that $\mathcal{Y}(\cdot, \omega_{\ell^\alpha}, \xi)$ is a contraction map. Applying the contraction mapping principle, the mapping $\mathcal{Y}(\cdot, \omega_{\ell^\alpha}, \xi)$ has a unique fixed point $\bar{u}^{\alpha}(\cdot, \omega_{\ell^\alpha}, \xi)\in \mathcal{C}_{\beta}^{\alpha,-} $. Then, for any $\xi_{1}, \xi_{2}\in PD(A^{\sigma})$, we get
\begin{align*}
&\|\bar{u}^{\alpha}(t, \omega_{\ell^\alpha}, \xi_{1})-\bar{u}^{\alpha}(t, \omega_{\ell^\alpha}, \xi_{2})\|_{\mathcal{C}_{\beta}^{\alpha,-}}
\\&\leq \|\xi_{1}-\xi_{2}\|+\mu\|u^{\alpha}(\cdot, \omega_{\ell^\alpha}, \xi_{1})-u^{\alpha}(\cdot, \omega_{\ell^\alpha}, \xi_{2})\|_{\mathcal{C}_{\beta}^{\alpha,-}}.
\end{align*}
Hence, we obtain
\begin{align}
\label{3-4.3}
\|\bar{u}^{\alpha}(t, \omega_{\ell^\alpha}, \xi_{1} )-\bar{u}^{\alpha}(t, \omega_{\ell^\alpha}, \xi_{2})\|_{\mathcal{C}_{\beta}^{\alpha,-}}\leq (1-\mu)^{-1}\|\xi_{1}-\xi_{2}\|_{\sigma}.
\end{align}
This implies that $\bar{u}^{\alpha}$ is Lipschitz continuous in $ \xi$.\\
$\mathbf{Step\ 2}$. We prove the existence of Lipschitz invariant manifolds for equation \eqref{eq:3-2.4}. Let $\psi^{\alpha}(\omega_{\ell^\alpha}, \xi)=Q\bar{u}^{\alpha}(0, \omega, \xi)$, it follows from \eqref{3-4.1} that
\begin{align*}
\psi^{\alpha}(\omega_{\ell^\alpha}, \xi)=                   \int_{-\infty}^0e^{As+\int_{s}^{0}z(\theta_{r}\omega_{\ell^\alpha})dr}QG(\theta_{s}\omega_{\ell^\alpha}, \bar{u}^{\alpha}(s, \omega_{\ell^\alpha}, \xi))ds.
\end{align*}
By Lemma \ref{lem:3-4.1}, we get
\begin{align*}
 \mathscr{M}^{\alpha}(\omega_{\ell^\alpha})=\big\lbrace \xi+\psi^{\alpha}(\omega_{\ell^\alpha}, \xi)|\xi\in PD(A^{\sigma})\big\rbrace.
\end{align*}
For any $\xi_{1}, \xi_{2}\in PD(A^{\sigma})$, by using \eqref{3-4.3}, we have
\begin{align}
\label{3-4.4}
&\|\psi^{\alpha}(\omega_{\ell^\alpha}, \xi_{1})-\psi^{\alpha}(\omega_{\ell^\alpha},  \xi_{2})\|_{\sigma}\nonumber\\ &\leq L\int_{-\infty}^0 \Big[ \Big( \frac{\sigma}{-s} \Big)^{\sigma} + \lambda_{N+1}^{\sigma} \Big] e^{(\lambda_{N+1}-\beta)s}ds \|\bar{u}^{\alpha}(t, \omega_{\ell^\alpha}, \xi_{1} )-\bar{u}^{\alpha}(t, \omega_{\ell^\alpha}, \xi_{2})\|_{\mathcal{C}_{\beta}^{\alpha,-}}\nonumber\\
&\leq L(1-\mu)^{-1}\left( \frac{\sigma^{\sigma}\Gamma(1-\sigma)}{(\lambda_{N+1}-\beta)^{1-\sigma}}+\frac{\lambda_{N+1}^{\sigma}}{\lambda_{N+1}-\beta}\right)\|\xi_{1}- \xi_{2}\|_{\sigma}\nonumber\\
&\leq \frac{\mu}{2(1-\mu)}\|\xi_{1}- \xi_{2}\|_{\sigma},
\end{align}
where the last inequality follows from \eqref{3-4.2}. Hence, $\psi^{\alpha}(\omega_{\ell^\alpha}, \xi)$ is Lipschitz continuous in $\xi\in PD(A^{\sigma})$. \par
 Then, we show that $ \mathscr{M}^{\alpha}(\omega_{\ell^\alpha})$ is a random set. Since $D(A^{\sigma})$ is a dense subspace of separable Banach space $H$, $D(A^{\sigma})$ is also separable.
Let $D(A^{\sigma})_c$ be a countable dense set of $D(A^{\sigma})$. For each $x \in D(A^{\sigma})$, we observe that
\begin{align*}
\omega_{\ell^\alpha} \mapsto\inf_{y\in D(A^{\sigma})}\big\lbrace \|x - (Py + \psi^{\alpha}(\omega_{\ell^\alpha}, Py))\|_{\sigma}\big\rbrace=\inf_{y\in D(A^{\sigma})_c}\big\lbrace \|x-(Py+\psi^{\alpha}(\omega_{\ell^\alpha}, Py))\|_{\sigma}\big\rbrace.
\end{align*}
By \cite[Theorem III.14]{Castaing77}, we obtain that $\mathscr{M}^{\alpha}(\omega_{\ell^\alpha})$ is $\mathcal{F}$-measurable. Finally, we prove that $ \mathscr{M}^{\alpha}(\omega_{\ell^\alpha})$ is invariant, i.e., for any $s \geq 0$,
\begin{align*}
u^{\alpha}(s, \omega_{\ell^\alpha}, \mathscr{M}^{\alpha}(\omega_{\ell^\alpha})) \subset \mathscr{M}^{\alpha}(\theta_s \omega_{\ell^\alpha}).
\end{align*}
For any $x \in  \mathscr{M}^{\alpha}(\omega_{\ell^\alpha})$, we have $u^{\alpha}(t, \theta_s \omega_{\ell^\alpha}, u^{\alpha}(s, \omega_{\ell^\alpha}, x)) = u^{\alpha}(t + s, \omega_{\ell^\alpha}, x)$. Hence
$u^{\alpha}(s, \omega_{\ell^\alpha}, x) \in \mathscr{M}^{\alpha}(\theta_s \omega_{\ell^\alpha})$, which implies that $u^{\alpha}(s, \omega_{\ell^\alpha}, \mathscr{M}^{\alpha}(\omega_{\ell^\alpha})) \subset \mathscr{M}^{\alpha}(\theta_s \omega_{\ell^\alpha})$. Thus $\mathscr{M}^{\alpha}(\omega_{\ell^\alpha})$ is a Lipschitz invariant manifold.\\
$\mathbf{Step\ 3}$. We prove that $\mathscr{M}^{\alpha}(\omega_{\ell^\alpha})$ has the asymptotic completeness property.
Let $\mathcal{C}_{\beta}^{\alpha,+}$ is a Banach space given by
\begin{align*}
\mathcal{C}_{\beta}^{\alpha,+}= \Big\{ \varphi \in C([0,+\infty), D(A^{\sigma}) \ \big| \sup_{t \geq 0} \big\{e^{\beta t - \int_0^t z(\theta_r \omega_{\ell^\alpha}) dr} \|\varphi(t)\|_{\sigma} \big\} < +\infty \Big\}
\end{align*}
with the norm
\begin{align*}
\|\varphi\|_{\mathcal{C}_{\beta}^{\alpha,+}} = \sup_{t \geq 0} \big\{e^{\beta t - \int_0^t z(\theta_r \omega_{\ell^\alpha}) dr} \|\varphi(t)\|_{\sigma} \big\}.
\end{align*}\par
For any $y^{\alpha} \in \mathcal{C}_{\beta}^{\alpha,+}$, $x \in D(A^{\sigma})$, using the same procedure as Lemma \ref{lem:3-4.1}, we observe that $y^{\alpha} + u^{\alpha}(s, \omega_{\ell^\alpha}, x)$
is a solution of equation \eqref{eq:3-2.4} if and only if there exists some $p \in Q D(A^{\sigma})$ such that $y^{\alpha}$ satisfies
\begin{align}
\label{3-4.5}
y^{\alpha}(t) = &e^{-At + \int_0^t z(\theta_r \omega_{\ell^\alpha}) dr} p + \int_0^t e^{-A(t-s) + \int_s^t z(\theta_r \omega_{\ell^\alpha}) dr} Q \Delta G(\theta_s \omega_{\ell^\alpha}, y^{\alpha}(s),u^\alpha(s, \omega_{\ell^\alpha}, x))ds \nonumber \\
& + \int_{+\infty}^t e^{-A(t-s)+ \int_s^t z(\theta_r \omega_{\ell^\alpha}) dr} P \Delta G(\theta_s \omega_{\ell^\alpha}, y^{\alpha}(s),u^\alpha(s, \omega_{\ell^\alpha}, x))ds,
\end{align}
where
\begin{align*}
\Delta G(\theta_s \omega_{\ell^\alpha}, y^{\alpha}(s),u^\alpha(s, \omega_{\ell^\alpha}, x))= G(\theta_s \omega_{\ell^\alpha}, y^{\alpha}(s)+u^\alpha(s, \omega_{\ell^\alpha}, x))- G(\theta_s \omega_{\ell^\alpha}, u^\alpha(s, \omega_{\ell^\alpha}, x)).
\end{align*}
Denote the right hand side of \eqref{3-4.5} by $\mathcal{P}(y^{\alpha}, \omega_{\ell^\alpha}, p)$, by \eqref{3-2.1} and \eqref{3-2.2} we get for $t\geq0$,
\begin{align*}
&e^{\beta t - \int_0^t z(\theta_r \omega_{\ell^\alpha}) dr}\Big\|\mathcal{P}(y^{\alpha}, \omega_{\ell^\alpha}, p)\Big\|_{\sigma}\\
&\leq e^{(\beta-\lambda_{N+1}) t}\|p\|_{\sigma}+L \int_0^t \Big[ \Big( \frac{\sigma}{t-s} \Big)^{\sigma} + \lambda_{N+1}^{\sigma} \Big] e^{(\beta-\lambda_{N+1})(t-s)}\|y^{\alpha}\|_{\mathcal{C}_{\beta}^{\alpha,+}}ds
\\&\ \ \ +L\lambda_{N}^{\sigma}\int_t^{+\infty}e^{(\beta-\lambda_{N})(t-s)}\|y^{\alpha}\|_{\mathcal{C}_{\beta}^{\alpha,+}}ds \\
&\leq \|p\|_{\sigma}+L\left( \frac{\lambda_{N}^{\sigma}}{\beta-\lambda_{N}}+\frac{\sigma^{\sigma}\Gamma(1-\sigma)}{(\lambda_{N+1}-\beta)^{1-\sigma}}+\frac{\lambda_{N+1}^{\sigma}}{\lambda_{N+1}-\beta}\right)\|y^{\alpha}\|_{\mathcal{C}_{\beta}^{\alpha,+}}\\
&\leq \|p\|_{\sigma}+\mu\|y^{\alpha}\|_{\mathcal{C}_{\beta}^{\alpha,+}}.
\end{align*}
This shows that $\mathcal{P}(y^{\alpha}, \omega_{\ell^\alpha}, p)$ maps $\mathcal{C}_{\beta}^{\alpha,+}$ into itself. Then, for any $y_{1}^{\alpha},y_{2}^{\alpha}\in \mathcal{C}_{\beta}^{\alpha,+} $, we have
\begin{align*}
\Big\|\mathcal{P}(y_{1}^{\alpha}, \omega_{\ell^\alpha}, p)-\mathcal{P}(y_{2}^{\alpha}, \omega_{\ell^\alpha}, p)\Big\|_{\mathcal{C}_{\beta}^{\alpha,+}} \leq \mu\|y_{1}^{\alpha}-y_{2}^{\alpha}\|_{\mathcal{C}_{\beta}^{\alpha,+}},
\end{align*}
which implies that $\mathcal{P}(y^{\alpha}, \omega_{\ell^\alpha}, p)$ is a contraction map. By the contraction mapping principle, $\mathcal{P}(y^{\alpha}, \omega_{\ell^\alpha}, p)$ has a unique fixed point $\bar{y}^{\alpha}(\cdot, \omega_{\ell^\alpha}, p)$ and satisfies
\begin{align}
\label{3-4.6}
\|\bar{y}^{\alpha}(\cdot, \omega_{\ell^\alpha}, p)\|_{\mathcal{C}_{\beta}^{\alpha,+}} \leq (1-\mu)^{-1}\|p\|_{\sigma}.
\end{align}
Consequently, we find that $\bar{y}^{\alpha}(\cdot, \omega_{\ell^\alpha}, p) + u^{\alpha}(\cdot, \omega_{\ell^\alpha}, x)$ is a solution of equation \eqref{eq:3-2.4}. This implies that there exists a $\tilde{x} \in D(A^{\sigma})$ such that
\begin{align}
\label{3-4.7}
u^{\alpha}(t, \omega_{\ell^\alpha}, \tilde{x})=\bar{y}^{\alpha}(t, \omega_{\ell^\alpha}, p) + u^{\alpha}(t, \omega_{\ell^\alpha}, x),
\end{align}
where $\tilde{x}=\bar{y}^{\alpha}(0, \omega_{\ell^\alpha}, p)+x$.\par
Following from Step 2, we can see that $\tilde{x} \in \mathscr{M}^{\alpha}(\omega_{\ell^\alpha})$ if and only if $Q\tilde{x}=\psi^{\alpha}(\omega_{\ell^\alpha}, P\tilde{x})$. Hence, letting
\begin{align*}
p = \psi^{\alpha}(\omega_{\ell^\alpha}, \int_{+\infty}^0 e^{As+ \int_s^0 z(\theta_r \omega_{\ell^\alpha}) dr} P \Delta G(\theta_s \omega_{\ell^\alpha}, y^{\alpha}(s),u^\alpha(s, \omega_{\ell^\alpha}, x))ds+Px)-Qx,
\end{align*}
we have that $\tilde{x} \in \mathscr{M}^{\alpha}(\omega_{\ell^\alpha})$. Combining \eqref{3-4.6} and \eqref{3-4.7}, we obtain that
\begin{align*}
\|u^{\alpha}(t, \omega_{\ell^\alpha}, x)-u^{\alpha}(t, \omega_{\ell^\alpha}, \tilde{x})\|_{\sigma} \leq e^{-\beta t + \int_0^t z(\theta_r \omega_{\ell^\alpha}) dr}(1-\mu)^{-1}\|x-\tilde{x}\|_{\sigma}.
\end{align*}
According to Lemma \ref{lem:3-2.1}, we have that $e^{ \int_0^t z(\theta_r \omega_{\ell^\alpha}) dr}$ satisfies the sublinear growth, there exists a random variable $C(\omega_{\ell^\alpha})$ such that
\begin{align*}
e^{ \int_0^t z(\theta_r \omega_{\ell^\alpha}) dr}\leq C(\omega_{\ell^\alpha})e^{\frac{1}{2}\beta t}.
\end{align*}
Thus we arrive at the asymptotic completeness property of $\mathscr{M}^{\alpha}(\omega_{\ell^\alpha})$ as
\begin{align}
\label{3-4.8}
\|u^{\alpha}(t, \omega_{\ell^\alpha}, x)-u^{\alpha}(t, \omega_{\ell^\alpha}, \tilde{x})\|_{\sigma} \leq  C(\omega_{\ell^\alpha})(1-\mu)^{-1}e^{-\frac{1}{2}\beta t}\|x-\tilde{x}\|_{\sigma}.
\end{align}
Therefore, $\mathscr{M}^{\alpha}(\omega_{\ell^\alpha})$ is a Lipschitz inertial manifolds for equation \eqref{eq:3-2.4}.\\
$\mathbf{Step\ 4}$. We show the existence of Lipschitz inertial manifolds for equation \eqref{eq:3-1.2}. Notice that $\widetilde{\mathscr{M}}^{\alpha}(\omega_{\ell^\alpha})=T^{-1}(\omega_{\ell^\alpha}, \mathscr{M}^{\alpha}(\omega_{\ell^\alpha}))$ is a Lipschitz invariant manifold of equation \eqref{eq:3-1.2}, and
\begin{align*}
\widetilde{\mathscr{M}}^{\alpha}(\omega_{\ell^\alpha})
&=\Big\lbrace T^{-1}(\omega_{\ell^\alpha},\xi+\psi^{\alpha}(\omega_{\ell^\alpha}, \xi))|\xi\in PD(A^{\sigma})\Big\rbrace\\
&=\left\lbrace e^{z(\omega_{\ell^\alpha})}(\xi+\psi^{\alpha}(\omega_{\ell^\alpha}, \xi))|\xi\in PD(A^{\sigma})\right\rbrace\\
&=\left\lbrace \xi+e^{z(\omega_{\ell^\alpha})}\psi^{\alpha}(\omega_{\ell^\alpha}, e^{-z(\omega_{\ell^\alpha})}\xi)|\xi\in PD(A^{\sigma})\right\rbrace.
\end{align*}
In fact, we find that
\begin{align*}
v^{\alpha}(t, \omega_{\ell^\alpha},\widetilde{\mathscr{M}}^{\alpha}(\omega_{\ell^\alpha}))
&= T^{-1}(\theta_t \omega_{\ell^\alpha}, u^{\alpha}(t, \omega_{\ell^\alpha}, T(\omega_{\ell^\alpha},\widetilde{\mathscr{M}}^{\alpha}(\omega_{\ell^\alpha})))) \\
&= T^{-1}(\theta_t \omega_{\ell^\alpha}, u^{\alpha}(t, \omega_{\ell^\alpha}, \mathscr{M}^{\alpha}(\omega_{\ell^\alpha}))) \\ &\subset T^{-1}(\theta_t \omega_{\ell^\alpha}, \mathscr{M}^{\alpha}(\theta_t \omega_{\ell^\alpha}))
= \widetilde{\mathscr{M}}^{\alpha}(\theta_t \omega_{\ell^\alpha}).
\end{align*}\par
Finally, we prove the asymptotic completeness property of $\widetilde{\mathscr{M}}^{\alpha}(\omega_{\ell^\alpha})$. Given $x \in D(A^{\sigma})$, we observe that
\begin{align*}
v^{\alpha}(t, \omega_{\ell^\alpha},x)
&= T^{-1}(\theta_t \omega_{\ell^\alpha}, u^{\alpha}(t, \omega_{\ell^\alpha}, T(\omega_{\ell^\alpha},x))) \\
&=e^{z(\omega_{\ell^\alpha})}u^{\alpha}(t, \omega_{\ell^\alpha}, e^{-z(\omega_{\ell^\alpha})}x).
\end{align*}
Let $y=e^{-z(\omega_{\ell^\alpha})}x$, by using \eqref{3-4.8}, there exists a $\tilde{y}\in \mathscr{M}^{\alpha}(\omega_{\ell^\alpha})$ such that
\begin{align}
\label{3-4.9}
\|u^{\alpha}(t, \omega_{\ell^\alpha}, y)-u^{\alpha}(t, \omega_{\ell^\alpha}, \tilde{y})\|_{\sigma} \leq  C(\omega_{\ell^\alpha})(1-\mu)^{-1}e^{-\frac{1}{2}\beta t}\|y-\tilde{y}\|_{\sigma}.
\end{align}
Choose $\tilde{x}=e^{z(\omega_{\ell^\alpha})}\tilde{y}$, for $\tilde{x}\in \widetilde{\mathscr{M}}^{\alpha}(\omega_{\ell^\alpha})$ and along with \eqref{3-4.9} yields
\begin{align*}
\|v^{\alpha}(t, \omega_{\ell^\alpha}, x)-v^{\alpha}(t, \omega_{\ell^\alpha}, \tilde{x})\|_{\sigma} \leq  C(\omega_{\ell^\alpha})(1-\mu)^{-1}e^{-\frac{1}{2}\beta t}\|x-\tilde{x}\|_{\sigma}.
\end{align*}
Hence $\widetilde{\mathscr{M}}^{\alpha}(\omega_{\ell^\alpha})$ is the Lipschitz inertial manifold of equation \eqref{eq:3-1.2}.\\
$\mathbf{Step\ 5}$. If $F\in C^{1}$, we prove that $\psi^{\alpha}(\omega_{\ell^\alpha}, \xi)$ is $C^{1}$ in $\xi$.\par
We first choose $\delta>0$ such that $\lambda_{N}<\beta-2\delta<\beta-\delta<\lambda_{N+1}$ and
\begin{align*}
L\left( \frac{\lambda_{N}^{\sigma}}{\beta-j\delta-\lambda_{N}}+\frac{\sigma^{\sigma}\Gamma(1-\sigma)}{(\lambda_{N+1}-\beta+j\delta)^{1-\sigma}}+\frac{\lambda_{N+1}^{\sigma}}{\lambda_{N+1}-\beta+j\delta}\right)
\leq\mu<1, j=1,2.
\end{align*}
This implies that $\mathcal{Y}(\cdot, \omega_{\ell^\alpha}, \xi)$ has a unique fixed point $\bar{u}^{\alpha}(\cdot,\omega_{\ell^\alpha},\xi)\in \mathcal{C}_{\beta-2\delta}^{\alpha,-}\subset\mathcal{C}_{\beta-\delta}^{\alpha,-}\subset\mathcal{C}_{\beta}^{\alpha,-}$. Next, we prove that $u^{\alpha}(\cdot,\omega_{\ell^\alpha},\xi)$ is $C^{1}$ from $PD(A^{\sigma})$ to $\mathcal{C}_{\beta}^{\alpha,-}$. \par
For any $u\in \mathcal{C}_{\beta-\delta}^{\alpha,-}$,  we define
\begin{align*}
\mathcal{G}u(t)=&\int_0^te^{-A(t-s)+\int_{s}^{t}z(\theta_{r}\omega_{\ell^\alpha})dr}PD_{u}G(\theta_{s}\omega_{\ell^\alpha}, \bar{u}^{\alpha}(s, \omega_{\ell^\alpha}, \xi_{0}))u(s)ds\\&+\int_{-\infty}^te^{-A(t-s)+\int_{s}^{t}z(\theta_{r}\omega_{\ell^\alpha})dr}QD_{u}G(\theta_{s}\omega_{\ell^\alpha}, \bar{u}^{\alpha}(s, \omega_{\ell^\alpha}, \xi_{0}))u(s)ds.
\end{align*}
We find that
\begin{align*}
e^{(\beta-\delta) t - \int_0^t z(\theta_r \omega_{\ell^\alpha}) dr}\|\mathcal{G}u(t)\|_{\sigma}\leq&L\lambda_{N}^{\sigma}\int_t^0e^{(\beta-\delta-\lambda_{N})(t-s)} \|u\|_{\mathcal{C}_{\beta-\delta}^{\alpha,-}}ds
\\&+ L\int_{-\infty}^t \Big[ \Big( \frac{\sigma}{t-s} \Big)^{\sigma} + \lambda_{N+1}^{\sigma} \Big] e^{(\beta-\delta-\lambda_{N+1})(t-s)} \|u\|_{\mathcal{C}_{\beta-\delta}^{\alpha,-}}ds.
\end{align*}
Then, $ \mathcal{G}$ is a bounded linear operator from $\mathcal{C}_{\beta-\delta}^{\alpha,-}$ to itself with the norm
\begin{align*}
\|\mathcal{G}\|\leq L\left( \frac{\lambda_{N}^{\sigma}}{\beta-\delta-\lambda_{N}}+\frac{\sigma^{\sigma}\Gamma(1-\sigma)}{(\lambda_{N+1}-\beta+\delta)^{1-\sigma}}+\frac{\lambda_{N+1}^{\sigma}}{\lambda_{N+1}-\beta+\delta}\right)
\leq\mu<1,
\end{align*}
which implies that $id-\mathcal{G}$ has a bounded inverse in $\mathcal{C}_{\beta-\delta}^{\alpha,-}$. Let
\begin{align*}
\mathcal{H}=&\int_0^te^{-A(t-s)+\int_{s}^{t}z(\theta_{r}\omega_{\ell^\alpha})dr}P\Big[G(\theta_{s}\omega_{\ell^\alpha}, \bar{u}^{\alpha}(s, \omega_{\ell^\alpha}, \xi))-G(\theta_{s}\omega_{\ell^\alpha}, \bar{u}^{\alpha}(s, \omega_{\ell^\alpha}, \xi_{0}))\\&\ \ \ \ -D_{u}G(\theta_{s}\omega_{\ell^\alpha}, \bar{u}^{\alpha}(s, \omega_{\ell^\alpha}, \tilde{\xi}))\big(\bar{u}^{\alpha}(s, \omega_{\ell^\alpha}, \xi)-\bar{u}^{\alpha}(s, \omega_{\ell^\alpha}, \xi_{0})\big)\Big]ds\\&+\int_{-\infty}^te^{-A(t-s)+\int_{s}^{t}z(\theta_{r}\omega_{\ell^\alpha})dr}Q\Big[G(\theta_{s}\omega_{\ell^\alpha}, \bar{u}^{\alpha}(s, \omega_{\ell^\alpha}, \xi))-G(\theta_{s}\omega_{\ell^\alpha}, \bar{u}^{\alpha}(s, \omega_{\ell^\alpha}, \xi_{0}))\\&\ \ \ \ -D_{u}G(\theta_{s}\omega_{\ell^\alpha}, \bar{u}^{\alpha}(s, \omega_{\ell^\alpha}, \xi_{0}))\big(\bar{u}^{\alpha}(s, \omega_{\ell^\alpha}, \xi)-\bar{u}^{\alpha}(s, \omega_{\ell^\alpha}, \xi_{0})\big)\Big]ds.
\end{align*}
Denote $\mathcal{S}=e^{At+\int_{0}^{t}z(\theta_{r}\omega_{\ell^\alpha})dr}$, which is a bounded operator from $PD(A^{\sigma})$ to $\mathcal{C}_{\beta-\delta}^{\alpha,-}$. We claim that
 \begin{align*}
 \|\mathcal{H}\|_{\mathcal{C}_{\beta-\delta}^{\alpha,-}}=o(\|\xi-\xi_{0}\|_{\sigma})\ as \ \xi\rightarrow\xi_{0}.
 \end{align*}
Then we have
\begin{align*}
&\bar{u}^{\alpha}(\cdot,\omega_{\ell^\alpha},\xi)-\bar{u}^{\alpha}(\cdot,\omega_{\ell^\alpha},\xi_{0})-\mathcal{G}(\bar{u}^{\alpha}(\cdot,\omega_{\ell^\alpha},\xi)-\bar{u}^{\alpha}(\cdot,\omega_{\ell^\alpha},\xi_{0})\big)
\\&=\mathcal{S}(\xi-\xi_{0})+o(\|\xi-\xi_{0}\|),\ as\ \xi\rightarrow\xi_{0},
\end{align*}
which implies that
\begin{align*}
\bar{u}^{\alpha}(\cdot,\omega_{\ell^\alpha},\xi)-\bar{u}^{\alpha}(\cdot,\omega_{\ell^\alpha},\xi_{0})=(id-\mathcal{G})^{-1}\mathcal{S}(\xi-\xi_{0})+o(\|\xi-\xi_{0}\|).
\end{align*}
Now we prove that $\|\mathcal{H}\|_{\mathcal{C}_{\beta-\delta}^{\alpha,-}}=o(\|\xi-\xi_{0}\|_{\sigma})\ as \ \xi\rightarrow\xi_{0}$. We decompose $e^{(\beta-\delta) t - \int_0^t z(\theta_r \omega_{\ell^\alpha}) dr}\mathcal{H}$ into a sum of four terms, i.e.,
\begin{align*}
 e^{(\beta-\delta) t - \int_0^t z(\theta_r \omega_{\ell^\alpha}) dr}\mathcal{H}=\mathcal{H}_{1}+\mathcal{H}_{2}+\mathcal{H}_{3}+\mathcal{H}_{4},\ \forall t\leq0,
\end{align*}
where
\begin{align*}
\mathcal{H}_{1}=&-e^{(\beta-\delta)t-\int_{0}^{t}z(\theta_{r}\omega_{\ell^\alpha})dr}\int_t^{-N_{1}}e^{-A(t-s)+\int_{s}^{t}z(\theta_{r}\omega_{\ell^\alpha})dr}P\Big[G(\theta_{s}\omega_{\ell^\alpha}, \bar{u}^{\alpha}(s, \omega_{\ell^\alpha}, \xi))\\&\ \ \ \ -G(\theta_{s}\omega_{\ell^\alpha}, \bar{u}^{\alpha}(s, \omega_{\ell^\alpha}, \xi_{0}))-D_{u}G(\theta_{s}\omega_{\ell^\alpha}, \bar{u}^{\alpha}(s, \omega_{\ell^\alpha}, \xi_{0}))\big(\bar{u}^{\alpha}(s, \omega_{\ell^\alpha}, \xi)-\bar{u}^{\alpha}(s, \omega_{\ell^\alpha}, \xi_{0})\big)\Big]ds
\end{align*}
for $t< -N_{1}$ and $\mathcal{H}_{1}=0$ for  $t\geq -N_{1}$,
\begin{align*}
\mathcal{H}_{2}=&-e^{(\beta-\delta)t-\int_{0}^{t}z(\theta_{r}\omega_{\ell^\alpha})dr}\int_{-N_{1}}^0e^{-A(t-s)+\int_{s}^{t}z(\theta_{r}\omega_{\ell^\alpha})dr}P\Big[G(\theta_{s}\omega_{\ell^\alpha}, \bar{u}^{\alpha}(s, \omega_{\ell^\alpha}, \xi))\\&\ \ \ \ -G(\theta_{s}\omega_{\ell^\alpha}, \bar{u}^{\alpha}(s, \omega_{\ell^\alpha}, \xi_{0}))-D_{u}G(\theta_{s}\omega_{\ell^\alpha}, \bar{u}^{\alpha}(s, \omega_{\ell^\alpha}, \xi_{0}))\big(\bar{u}^{\alpha}(s, \omega_{\ell^\alpha}, \xi)-\bar{u}^{\alpha}(s, \omega_{\ell^\alpha}, \xi_{0})\big)\Big]ds
\end{align*}
for $t< -N_{1}$ and change $-N_{1}$ to $t$ for $t\geq -N_{1}$,
\begin{align*}
\mathcal{H}_{3}=&e^{(\beta-\delta)t-\int_{0}^{t}z(\theta_{r}\omega_{\ell^\alpha})dr}\int_{-\infty}^{-N_{2}}e^{-A(t-s)+\int_{s}^{t}z(\theta_{r}\omega_{\ell^\alpha})dr}Q\Big[G(\theta_{s}\omega_{\ell^\alpha}, \bar{u}^{\alpha}(s, \omega_{\ell^\alpha}, \xi))\\&\ \ \ \ -G(\theta_{s}\omega_{\ell^\alpha}, \bar{u}^{\alpha}(s, \omega_{\ell^\alpha}, \xi_{0}))-D_{u}G(\theta_{s}\omega_{\ell^\alpha}, \bar{u}^{\alpha}(s, \omega_{\ell^\alpha}, \xi_{0}))\big(\bar{u}^{\alpha}(s, \omega_{\ell^\alpha}, \xi)-\bar{u}^{\alpha}(s, \omega_{\ell^\alpha}, \xi_{0})\big)\Big]ds
\end{align*}
for $t> -N_{2}$ and change $-N_{2}$ to $t$ for $t\leq -N_{2}$,
\begin{align*}
\mathcal{H}_{4}=&e^{(\beta-\delta)t-\int_{0}^{t}z(\theta_{r}\omega_{\ell^\alpha})dr}\int_{-N_{2}}^te^{-A(t-s)+\int_{s}^{t}z(\theta_{r}\omega_{\ell^\alpha})dr}Q\Big[G(\theta_{s}\omega_{\ell^\alpha}, \bar{u}^{\alpha}(s, \omega_{\ell^\alpha}, \xi))\\&\ \ \ \ -G(\theta_{s}\omega_{\ell^\alpha}, \bar{u}^{\alpha}(s, \omega_{\ell^\alpha}, \xi_{0}))-D_{u}G(\theta_{s}\omega_{\ell^\alpha}, \bar{u}^{\alpha}(s, \omega_{\ell^\alpha}, \xi_{0}))\big(\bar{u}^{\alpha}(s, \omega_{\ell^\alpha}, \xi)-\bar{u}^{\alpha}(s, \omega_{\ell^\alpha}, \xi_{0})\big)\Big]ds
\end{align*}
for $t>-N_{2}$ and $\mathcal{H}_{4}=0$ for  $t\leq -N_{2}$. Here $N_{1}$ and $N_{2}$ are sufficiently large positive number to be chosen later. We find that for $t< -N_{1}$,
\begin{align*}
\|\mathcal{H}_{1}\|_{\sigma}&\leq L\lambda_{N}^{\sigma}e^{-\delta N_{1}}\int_t^{-N_{1}}e^{(\beta-2\delta-\lambda_{N})(t-s)}ds \|\bar{u}^{\alpha}(\cdot, \omega_{\ell^\alpha}, \xi)-\bar{u}^{\alpha}(\cdot, \omega_{\ell^\alpha}, \xi_{0})\|_{\mathcal{C}_{\beta-2\delta}^{\alpha,-}}
\\&\leq \frac{2L\lambda_{N}^{\sigma}}{\beta-2\delta-\lambda_{N}}(1-\mu)^{-1}e^{-\delta N_{1}}\|\xi-\xi_{0}\|_{\sigma}.
\end{align*}
For any $\varepsilon>0$, choose $N_{1}$ so large that
\begin{align}
\label{3-4.10}
\sup_{t\leq0}\|\mathcal{H}_{1}\|_{\sigma}\leq\frac{\varepsilon}{4}\|\xi-\xi_{0}\|_{\sigma}.
\end{align}
Fixing such $N_{1}$, we get that
\begin{align*}
\|\mathcal{H}_{2}\|_{\sigma}&\leq \lambda_{N}^{\sigma}\int_{-N_{1}}^0e^{(\beta-\delta-\lambda_{N})(t-s)} \int_{0}^{1}\|D_{u}G(\theta_{s}\omega_{\ell^\alpha}, \tau \bar{u}^{\alpha}(s, \omega_{\ell^\alpha}, \xi)+(1-\tau) \bar{u}^{\alpha}(s, \omega_{\ell^\alpha}, \xi_{0})) \\&\ \ \ \ \ \ \ \ \ -D_{u}G(\theta_{s}\omega_{\ell^\alpha}, \bar{u}^{\alpha}(s, \omega_{\ell^\alpha}, \xi_{0}))\|d\tau ds \|\bar{u}^{\alpha}(\cdot, \omega_{\ell^\alpha}, \xi)-\bar{u}^{\alpha}(\cdot, \omega_{\ell^\alpha}, \xi_{0})\|_{\mathcal{C}_{\beta-2\delta}^{\alpha,-}}
\\&\leq \lambda_{N}^{\sigma}(1-\mu)^{-1}\|\xi-\xi_{0}\|_{\sigma}\int_{-N_{1}}^0e^{(\beta-\delta-\lambda_{N})(t-s)}\\&\ \ \ \ \ \times \int_{0}^{1}\|D_{u}G(\theta_{s}\omega_{\ell^\alpha}, \tau \bar{u}^{\alpha}(s, \omega_{\ell^\alpha}, \xi)+(1-\tau) \bar{u}^{\alpha}(s, \omega_{\ell^\alpha}, \xi_{0})) \\&\ \ \ \ \ \ \ \ \ -D_{u}G(\theta_{s}\omega_{\ell^\alpha}, \bar{u}^{\alpha}(s, \omega_{\ell^\alpha}, \xi_{0}))\|d\tau ds.
\end{align*}
Since the continuity of the integrand, for $\varepsilon>0$, there exists $\rho_{1}>0$ such that if $\|\xi-\xi_{0}\|_{\sigma}\leq\rho_{1}$,
\begin{align}
\label{3-4.11}
\sup_{t\leq0}\|\mathcal{H}_{2}\|_{\sigma}\leq\frac{\varepsilon}{4}\|\xi-\xi_{0}\|_{\sigma}.
\end{align}
Analogously, by choosing $N_{2}$ to be sufficiently large, we have
\begin{align}
\label{3-4.12}
\sup_{t\leq0}\|\mathcal{H}_{3}\|_{\sigma}\leq\frac{\varepsilon}{4}\|\xi-\xi_{0}\|_{\sigma}.
\end{align}
Fix $N_{2}$, there exists $\rho_{2}>0$ such that if $\|\xi-\xi_{0}\|_{\sigma}\leq\rho_{2}$,
\begin{align}
\label{3-4.13}
\sup_{t\leq0}\|\mathcal{H}_{4}\|_{\sigma}\leq\frac{\varepsilon}{4}\|\xi-\xi_{0}\|_{\sigma}.
\end{align}
Taking $\rho=\min\left\{\rho_{1}, \rho_{2}\right\}$ and combining \eqref{3-4.10}-\eqref{3-4.13}, we obtain that
\begin{align*}
\|\mathcal{H}\|_{\mathcal{C}_{\beta-\delta}^{\alpha,-}}=o( \|\xi-\xi_{0}\|_{\sigma})\ as \ \xi\rightarrow\xi_{0} .
\end{align*}
Hence, $\bar{u}^{\alpha}(\cdot,\omega_{\ell^\alpha},\xi)$ is differentiable in $\xi$ and $D_{\xi}\bar{u}^{\alpha}(\cdot,\omega_{\ell^\alpha},\xi)\in\mathcal{L}(PD(A^{\sigma}), \mathcal{C}_{\beta-\delta}^{\alpha,-})$.\par
From \eqref{3-4.1}, we have
\begin{align*}
D_{\xi}\bar{u}^{\alpha}(t,\omega_{\ell^\alpha},\xi)=&e^{-At+\int_{0}^{t}z(\theta_{r}\omega_{\ell^\alpha})dr}P\\
&+\int_0^te^{-A(t-s)+\int_{s}^{t}z(\theta_{r}\omega_{\ell^\alpha})dr}PD_{u}G(\theta_{s}\omega_{\ell^\alpha}, \bar{u}^{\alpha}(s, \omega_{\ell^\alpha}, \xi))D_{\xi}\bar{u}^{\alpha}(s,\omega_{\ell^\alpha},\xi)ds\\
&+\int_{-\infty}^te^{-A(t-s)+\int_{s}^{t}z(\theta_{r}\omega_{\ell^\alpha})dr}QD_{u}G(\theta_{s}\omega_{\ell^\alpha}, \bar{u}^{\alpha}(s, \omega_{\ell^\alpha}, \xi))D_{\xi}\bar{u}^{\alpha}(s,\omega_{\ell^\alpha},\xi)ds.
\end{align*}
For $\xi\in PD(A^{\sigma})$, we define the operator $\widetilde{\mathcal{G}}:\mathcal{L}(PD(A^{\sigma}), \ \mathcal{C}_{\beta}^{\alpha,-})\rightarrow \mathcal{L}(PD(A^{\sigma}), \ \mathcal{C}_{\beta}^{\alpha,-})$
\begin{align*}
\widetilde{\mathcal{G}}u(t)=&\int_0^te^{-A(t-s)+\int_{s}^{t}z(\theta_{r}\omega_{\ell^\alpha})dr}PD_{u}G(\theta_{s}\omega_{\ell^\alpha}, \bar{u}^{\alpha}(s, \omega_{\ell^\alpha}, \xi))u(s)ds\\&+\int_{-\infty}^te^{-A(t-s)+\int_{s}^{t}z(\theta_{r}\omega_{\ell^\alpha})dr}QD_{u}G(\theta_{s}\omega_{\ell^\alpha}, \bar{u}^{\alpha}(s, \omega_{\ell^\alpha}, \xi))u(s)ds.
\end{align*}
Similarly, we have that $\|\widetilde{\mathcal{G}}\|<1$. This yields that $id-\widetilde{\mathcal{G}}$ has a bounded inverse in $\mathcal{L}(PD(A^{\sigma}), \mathcal{C}_{\beta}^{\alpha,-})$. \par
We find that
\begin{align}
\label{3-4.14}
&D_{\xi}\bar{u}^{\alpha}(t,\omega_{\ell^\alpha},\xi)-D_{\xi}\bar{u}^{\alpha}(t,\omega_{\ell^\alpha},\xi_{0})\nonumber\\
&=\widetilde{\mathcal{G}}(D_{\xi}\bar{u}^{\alpha}(t,\omega_{\ell^\alpha},\xi)-D_{\xi}\bar{u}^{\alpha}(t,\omega_{\ell^\alpha},\xi_{0}))+\widetilde{\mathcal{H}},
\end{align}
where
\begin{align*}
\widetilde{\mathcal{H}}=&\int_0^te^{-A(t-s)+\int_{s}^{t}z(\theta_{r}\omega_{\ell^\alpha})dr}P\big[D_{u}G(\theta_{s}\omega_{\ell^\alpha}, \bar{u}^{\alpha}(s, \omega_{\ell^\alpha}, \xi))\\&\ \ \  -D_{u}G(\theta_{s}\omega_{\ell^\alpha},\bar{u}^{\alpha}(s,\omega,\xi_{0}))\big]D_{\xi}\bar{u}^{\alpha}(s,\omega_{\ell^\alpha},\xi_{0})ds\\
&+\int_{-\infty}^te^{-A(t-s)+\int_{s}^{t}z(\theta_{r}\omega_{\ell^\alpha})dr}Q\big[D_{u}G(\theta_{s}\omega_{\ell^\alpha}, \bar{u}^{\alpha}(s, \omega_{\ell^\alpha}, \xi))\\&\ \ \  -D_{u}G(\theta_{s}\omega_{\ell^\alpha},\bar{u}^{\alpha}(s,\omega,\xi_{0}))\big]D_{\xi}\bar{u}^{\alpha}(s,\omega_{\ell^\alpha},\xi_{0})ds.
\end{align*}
Using the same procedure as for $\mathcal{H}$, we get that $\|\widetilde{\mathcal{H}}\|_{\mathcal{L}(PD(A^{\sigma}),  \mathcal{C}_{\beta}^{\alpha,-})}=o(1)\ as \ \xi\rightarrow\xi_{0}$. In view of \eqref{3-4.10}, $D_{\xi}\bar{u}^{\alpha}(\cdot,\omega_{\ell^\alpha},\xi)$ is continuous with respect to $\xi$. Hence  $\bar{u}^{\alpha}(\cdot,\omega_{\ell^\alpha},\cdot)$ is $C^{1}$ from $PD(A^{\sigma})$ to $\mathcal{C}_{\beta}^{\alpha,-}$. Therefore, $\psi^{\alpha}(\omega_{\ell^\alpha}, \xi)$ is $C^{1}$ in $\xi$. This completes the proof.
\end{proof}\par

Let $\mathcal{C}_{\beta}^{-}$ be a Banach space defined by
\begin{align*}
\mathcal{C}_{\beta}^{-} = \Big\{ \varphi\in C((-\infty, 0], D(A^{\sigma})) \ \big| \sup_{t\leq0} \big\{e^{\beta t - \int_0^t z(\theta_r \omega_{\ell^\alpha}) dr} \|\varphi(t)\|_{\sigma} \big\} < +\infty \Big\}
\end{align*}
with the norm
\begin{align*}
\|\varphi\|_{\mathcal{C}_{\beta}^{-}} = \sup_{t\leq0} \big\{e^{\beta t - \int_0^t z(\theta_r \omega_{\ell^\alpha}) dr} \|\varphi(t)\|_{\sigma}\big\}.
\end{align*}
When $\alpha=2$, by using the same arguments, we can show the similar results for equation \eqref{eq:3-1.3} and \eqref{eq:3-2.5}. Here we summarize them as follows.
\begin{lem}
\label{lem:3-4.2}
$x \in \mathscr{M}(\omega)$ if and only if there exists a function $u(\cdot) \in \mathcal{C}_{\beta}^{-}$ with $u(0) = x$ and satisfies
\begin{align}
\label{3-4.15}
u(t) &= e^{-A t + \int_0^t z(\theta_r \omega) dr} \xi
+ \int_0^t e^{-A(t-s) + \int_s^t z(\theta_r \omega) dr} P G(\theta_s \omega, u(s)) ds \notag\\
&\qquad
+ \int_{-\infty}^t e^{-A(t-s) + \int_s^t z(\theta_r \omega) dr} Q G(\theta_s \omega, u(s)) ds,
\end{align}
where $\xi = P x$.
\end{lem}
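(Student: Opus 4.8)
The plan is to transcribe the proof of Lemma \ref{lem:3-4.1} to the case $\alpha=2$, replacing the subordinated path $\omega_{\ell^\alpha}$ by the Brownian path $\omega$ and equation \eqref{eq:3-2.4} by equation \eqref{eq:3-2.5}. The two random equations share the same structure, and the stationary Ornstein--Uhlenbeck process $z(\theta_t\omega)$ associated with $W_t$ enjoys the same sublinear-growth properties as $z(\theta_t\omega_{\ell^\alpha})$ (the Brownian analogue of Lemma \ref{lem:3-2.1}, see \cite{Duan03}), so every estimate carries over unchanged.

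For the forward direction, I would assume $x\in\mathscr{M}(\omega)$, so that by definition $u(\cdot,\omega,x)\in\mathcal{C}_\beta^-$. The variation of constants formula for \eqref{eq:3-2.5}, projected by $P=P_N$, immediately reproduces the finite-dimensional part of \eqref{3-4.15}. For $Q=I-P$, variation of constants from an arbitrary base point $\tau<t$ produces the boundary term $e^{-A(t-\tau)+\int_\tau^t z(\theta_r\omega)dr}Q u(\tau,\omega,x)$ together with the backward integral. The crux is to let $\tau\to-\infty$: combining the bound $\|e^{-At}Q\|\le e^{-\lambda_{N+1}t}$ from \eqref{3-2.2} with the $\mathcal{C}_\beta^-$-estimate $\|u(\tau)\|_\sigma\le e^{-\beta\tau+\int_0^\tau z(\theta_r\omega)dr}\|u\|_{\mathcal{C}_\beta^-}$ yields
\begin{align*}
\|e^{-A(t-\tau)+\int_\tau^t z(\theta_r\omega)dr}Q u(\tau,\omega,x)\|_\sigma \le e^{-\lambda_{N+1}(t-\tau)}\,e^{-\beta\tau+\int_0^t z(\theta_r\omega)dr}\|u\|_{\mathcal{C}_\beta^-},
\end{align*}
and since $\beta<\lambda_{N+1}$ the exponent $-\lambda_{N+1}t+(\lambda_{N+1}-\beta)\tau$ tends to $-\infty$, so the boundary term vanishes. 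Adding the resulting backward representation of $Qu(t,\omega,x)$ to the $P$-part produces exactly \eqref{3-4.15} with $\xi=Px$.

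For the converse, given $u\in\mathcal{C}_\beta^-$ solving \eqref{3-4.15} with $u(0)=x$, I would check by direct computation that $u$ coincides with the mild solution $u(\cdot,\omega,x)$ of \eqref{eq:3-2.5}; since $u\in\mathcal{C}_\beta^-$ by hypothesis, the definition of $\mathscr{M}(\omega)$ then gives $x\in\mathscr{M}(\omega)$.

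I do not anticipate a genuine obstacle, as this is an exact analogue of Lemma \ref{lem:3-4.1}. The only point demanding care is the spectral location $\beta\in(\lambda_N,\lambda_{N+1})$: this is precisely what forces the $Q$-boundary term to decay as $\tau\to-\infty$ while keeping the $P$-forward integral finite, and it is the same condition that governs the corresponding step in the $\alpha\in(1,2)$ case.
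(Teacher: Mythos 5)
Your proposal is correct and coincides with the paper's treatment: the paper does not write out a separate proof of Lemma \ref{lem:3-4.2} but simply invokes the same arguments as Lemma \ref{lem:3-4.1}, which is exactly what you carry out (variation of constants on the $P$- and $Q$-components, the boundary term $e^{-A(t-\tau)+\int_\tau^t z(\theta_r\omega)dr}Qu(\tau)$ vanishing as $\tau\to-\infty$ because $\beta<\lambda_{N+1}$, and the converse by direct computation). Your exponent bookkeeping for the vanishing of the $Q$-boundary term matches the paper's estimate.
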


\begin{thm}
\label{th:3-4.2}
Assume that the spectral gap condition \eqref{3-4.2}
holds. Then there exists a Lipschitz inertial manifold for equation \eqref{eq:3-2.5}, which is given by
\begin{align*}
 \mathscr{M}(\omega)=\big\lbrace \xi+\psi(\omega, \xi)|\xi\in PD(A^{\sigma})\big\rbrace,
\end{align*}
where $\psi(\omega, \cdot): PD(A^{\sigma})\rightarrow QD(A^{\sigma})$ is Lipschitz continuous defined by
\begin{align*}
\psi(\omega, \xi)= \int_{-\infty}^0e^{As+\int_{s}^{0}z(\theta_{r}\omega)dr}QG(\theta_{s}\omega, \bar{u}(s, \omega, \xi))ds, \ \forall \xi \in PD(A^{\sigma}),
\end{align*}
and $\psi$ is measurable in $(\omega, \xi)$.  Here $\bar{u}(\cdot, \omega, \xi))$ is the unique solution of equation \eqref{3-4.15} in $ \mathcal{C}_{\beta}^{-}$. Furthermore, $\widetilde{\mathscr{M}}(\omega)=T^{-1}(\omega, \mathscr{M}(\omega))$, i.e.,
\begin{align*}
\widetilde{\mathscr{M}}(\omega)=\left\lbrace \xi+e^{z(\omega)}\psi(\omega, e^{-z(\omega)}\xi)|\xi\in PD(A^{\sigma})\right\rbrace
\end{align*}
is the Lipschitz inertial manifold of equation \eqref{eq:3-1.3}.\par
In addition, if $F\in C^{1}$, then $\mathscr{M}(\omega)$ is a $C^{1}$ inertial manifold for equation \eqref{eq:3-2.5}, i.e., $\psi(\omega, \xi)$ is $C^{1}$ in $\xi$. Consequently, $\widetilde{\mathscr{M}}(\omega)$ is a $C^{1}$ inertial manifold of equation \eqref{eq:3-1.3}.
\end{thm}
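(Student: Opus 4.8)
The plan is to run the five-step scheme of Theorem \ref{th:3-4.1} verbatim with $\alpha=2$, exploiting the fact that every ingredient used there is independent of $\alpha$: the semigroup bounds \eqref{3-2.2}, the analyticity estimate \eqref{3-3.4}, the Lipschitz constant $L$ of $G$ (inherited from $F$), and the spectral gap condition \eqref{3-4.2}. The only object carrying the parameter is the Ornstein--Uhlenbeck process $z(\theta_t \omega_{\ell^\alpha})$; one replaces it by the Gaussian stationary solution $z(\theta_t \omega)$, whose existence, stationarity, c\`adl\`ag regularity, and sublinear growth are the $W_t$-analogue of Lemma \ref{lem:3-2.1} (see \cite{Duan03}), and replaces the spaces $\mathcal{C}_{\beta}^{\alpha,\pm}$ by their Gaussian counterparts $\mathcal{C}_{\beta}^{\pm}$, with $\mathcal{C}_{\beta}^{-}$ as defined above and $\mathcal{C}_{\beta}^{+}$ introduced analogously on $[0,+\infty)$. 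First I would define the Lyapunov--Perron map $\mathcal{Y}(u,\omega,\xi)$ as the right-hand side of \eqref{3-4.15} and show, as in Step 1, that it maps $\mathcal{C}_{\beta}^{-}$ into itself and is a contraction with constant at most $\mu<1$; the contraction mapping principle then yields the unique fixed point $\bar{u}(\cdot,\omega,\xi)\in\mathcal{C}_{\beta}^{-}$, Lipschitz in $\xi$ with constant $(1-\mu)^{-1}$.

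Setting $\psi(\omega,\xi):=Q\bar{u}(0,\omega,\xi)$, Lemma \ref{lem:3-4.2} identifies $\mathscr{M}(\omega)$ with the graph of $\psi$, and evaluating \eqref{3-4.15} at $t=0$ against $Q$ produces the explicit integral formula in the statement; the Lipschitz estimate for $\psi$ and its measurability (via a countable dense subset of $D(A^{\sigma})$ and \cite{Castaing77}) follow as in Step 2, while invariance follows from the cocycle property. I would then reproduce Step 3 on the forward space $\mathcal{C}_{\beta}^{+}$: solving the fixed-point equation for the difference $y$ gives the bound $(1-\mu)^{-1}\|p\|_{\sigma}$, and choosing $p$ so that the shadowing point $\tilde{x}$ lies on $\mathscr{M}(\omega)$, together with the sublinear growth of $e^{\int_0^t z(\theta_r \omega)\,dr}$, converts this into the exponential decay defining asymptotic completeness. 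Step 4 transports everything to \eqref{eq:3-1.3} through the conjugacy $T^{-1}(\omega,\cdot)=e^{z(\omega)}(\cdot)$, yielding the stated formula for $\widetilde{\mathscr{M}}(\omega)$ and its asymptotic completeness.

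The main work, exactly as in Theorem \ref{th:3-4.1}, is Step 5: the $C^{1}$ regularity of $\psi$ when $F\in C^{1}$, and this is the step I expect to be the hardest. Here I would first shrink the decay rate, choosing $\delta>0$ with $\lambda_N<\beta-2\delta<\beta-\delta<\lambda_{N+1}$ so that the contraction estimate still holds for the exponents $\beta-j\delta$, placing the fixed point in $\mathcal{C}_{\beta-2\delta}^{-}\subset\mathcal{C}_{\beta-\delta}^{-}\subset\mathcal{C}_{\beta}^{-}$. The candidate derivative solves the linearized integral equation, whose associated operator $\mathcal{G}$ satisfies $\|\mathcal{G}\|\leq\mu<1$, so $id-\mathcal{G}$ is invertible; the crux is to verify that the Taylor remainder obeys $\|\mathcal{H}\|_{\mathcal{C}_{\beta-\delta}^{-}}=o(\|\xi-\xi_0\|_{\sigma})$. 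This is handled by the four-term splitting $\mathcal{H}=\mathcal{H}_1+\mathcal{H}_2+\mathcal{H}_3+\mathcal{H}_4$: the tails $\mathcal{H}_1,\mathcal{H}_3$ are made small by the extra decay gained from $\delta$ (taking $N_1,N_2$ large), while the remaining pieces $\mathcal{H}_2,\mathcal{H}_4$ are controlled by the uniform continuity of $D_uG$ together with the Lipschitz bound on $\bar{u}$. Continuity of $\xi\mapsto D_\xi\bar{u}$ follows from the same remainder analysis applied to $\widetilde{\mathcal{H}}$, so $\bar{u}(\cdot,\omega,\cdot)$ is $C^{1}$ from $PD(A^{\sigma})$ into $\mathcal{C}_{\beta}^{-}$ and hence $\psi(\omega,\xi)=Q\bar{u}(0,\omega,\xi)$ is $C^{1}$ in $\xi$; transporting by $T^{-1}$ gives the $C^{1}$ inertial manifold $\widetilde{\mathscr{M}}(\omega)$ of \eqref{eq:3-1.3}. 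Since none of these estimates involve $\alpha$, the only genuine point to check is that the Gaussian process $z(\theta_t\omega)$ enjoys the same pathwise growth and regularity invoked above, which is guaranteed by \cite{Duan03}.
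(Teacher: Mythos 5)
Your proposal is correct and coincides with the paper's own treatment: the paper proves Theorem \ref{th:3-4.2} simply by remarking that the five-step argument of Theorem \ref{th:3-4.1} carries over verbatim when $\alpha=2$, with $z(\theta_t\omega_{\ell^\alpha})$ replaced by the Gaussian Ornstein--Uhlenbeck process $z(\theta_t\omega)$ (whose required pathwise properties come from \cite{Duan03}) and $\mathcal{C}_{\beta}^{\alpha,\pm}$ replaced by $\mathcal{C}_{\beta}^{\pm}$. Your identification of the $\alpha$-independence of all the estimates, and of the regularity of $z(\theta_t\omega)$ as the only point needing separate verification, is exactly the content of the paper's argument.
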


In what follows, we study the relations between the inertial manifolds of equation \eqref{eq:3-2.4} and equation \eqref{eq:3-2.5}.

\begin{thm}
\label{th:3-4.3}
Assume that the same conditions in Theorem \ref{th:3-4.1} hold. Then we have
\begin{align*}
\|\psi^{\alpha}(\omega_{\ell^\alpha}, \xi)-\psi(\omega, \xi)\|_{\sigma}\rightarrow 0
\end{align*}
in probability as $\alpha\rightarrow 2$. Furthermore,
\begin{align*}
\|D_{\xi}\psi^{\alpha}(\omega_{\ell^\alpha}, \xi)-D_{\xi}\psi(\omega, \xi)\|\rightarrow 0
\end{align*}
in probability as $\alpha\rightarrow 2$.
\end{thm}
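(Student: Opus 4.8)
The plan is to reduce everything to the almost sure, locally uniform convergence of the Ornstein--Uhlenbeck processes and then to propagate this convergence through the fixed-point constructions of Theorem \ref{th:3-4.1} and Theorem \ref{th:3-4.2}. As in the proof of Theorem \ref{th:3-3.1}, it suffices to show that along every subsequence $\{\alpha_n\}$ there is a sub-subsequence $\{\alpha_{n(k)}\}$ for which the stated quantities converge to $0$ almost surely; by Lemma \ref{lem:3-2.2} we may pass to such a sub-subsequence along which $\sup_{s\in[-T,0]}|z(\theta_s\omega_{\ell^{\alpha_{n(k)}}})-z(\theta_s\omega)|\to 0$ for every $T>0$, almost surely. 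Throughout I write $\alpha$ for $\alpha_{n(k)}$ and work in the common space $\mathcal{C}_{\beta}^{-}$, the weight discrepancy between $\mathcal{C}_{\beta}^{\alpha,-}$ and $\mathcal{C}_{\beta}^{-}$ being absorbed into the forcing term below.

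First I would establish the convergence of the underlying fixed points, namely
\[
\|\bar{u}^{\alpha}(\cdot,\omega_{\ell^\alpha},\xi)-\bar{u}(\cdot,\omega,\xi)\|_{\mathcal{C}_{\beta}^{-}}\to 0 .
\]
Subtracting the integral identities \eqref{3-4.1} and \eqref{3-4.15}, one groups the difference into a term that is Lipschitz in $\bar{u}^\alpha-\bar{u}$ with exactly the kernels of Step 1 of Theorem \ref{th:3-4.1}, and a forcing term collecting the discrepancies $e^{\int_s^t z(\theta_r\omega_{\ell^\alpha})dr}-e^{\int_s^t z(\theta_r\omega)dr}$ in the semigroup factors together with $G(\theta_s\omega_{\ell^\alpha},\cdot)-G(\theta_s\omega,\cdot)$ in the nonlinearity (the latter controlled by $2L|e^{z(\theta_s\omega_{\ell^\alpha})-z(\theta_s\omega)}-1|\,\|\bar u\|_\sigma$ as in Lemma \ref{lem:3-3.4}). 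Because the Lipschitz kernels contribute a factor $\mu<1$ by the spectral gap condition \eqref{3-4.2}, this term is absorbed into the left-hand side, giving
\[
\|\bar{u}^{\alpha}-\bar{u}\|_{\mathcal{C}_{\beta}^{-}}\leq (1-\mu)^{-1}\,(\text{forcing term}).
\]
It then remains to show that the forcing term tends to $0$. The difficulty is that the convergence of $z$ is only locally uniform while the memory integrals run over $(-\infty,0]$, so I would split each $\int_{-\infty}^t$ at a large cut-off $-N$: on $[-N,0]$ the locally uniform convergence of $z$ and the continuity of $x\mapsto e^{x}$ make the integrand small, while on $(-\infty,-N]$ the exponential weights $e^{(\beta-\lambda_{N+1})(t-s)}$ and $e^{(\beta-\lambda_{N})(t-s)}$ combined with the sublinear growth of $\int_0^t z\,dr$ from Lemma \ref{lem:3-2.1} bound the tail uniformly in $\alpha$ by a quantity that is $O(e^{-\delta N})$. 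This is precisely the compact-part/tail-part decomposition used for $\mathcal{H}_1,\dots,\mathcal{H}_4$ in Step 5 of Theorem \ref{th:3-4.1}.

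With the fixed-point convergence in hand, the $C^0$ statement follows by inserting $\bar u^\alpha,\bar u$ into the representation
\[
\psi^{\alpha}(\omega_{\ell^\alpha},\xi)=\int_{-\infty}^0 e^{As+\int_s^0 z(\theta_r\omega_{\ell^\alpha})dr}QG(\theta_s\omega_{\ell^\alpha},\bar u^\alpha(s,\omega_{\ell^\alpha},\xi))\,ds
\]
and its $\alpha=2$ analogue, and again splitting the $\int_{-\infty}^0$ integral into a compact part, where $z(\theta_s\omega_{\ell^\alpha})\to z(\theta_s\omega)$, $\bar u^\alpha\to\bar u$ and the Lipschitz continuity of $G$ force the difference to $0$, and an exponentially small tail. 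For the $C^1$ statement I would differentiate: $D_\xi\bar u^\alpha$ is the unique fixed point of the linear operator $\mathrm{id}-\widetilde{\mathcal{G}}^{\alpha}$ of Step 5, with a uniform, $\alpha$-independent bound $\|\widetilde{\mathcal{G}}^{\alpha}\|\le\mu<1$ guaranteed by \eqref{3-4.2}. Subtracting the two linear fixed-point equations and absorbing the $\mu$-term as before reduces the claim to the convergence of the coefficient $D_uG(\theta_s\omega_{\ell^\alpha},\bar u^\alpha(s))\to D_uG(\theta_s\omega,\bar u(s))$; since $D_uG(\theta_s\cdot,v)=DF(e^{z(\theta_s\cdot)}v)$ with $DF$ continuous and bounded by $L$, this follows from $F\in C^{1}$ together with the already-established convergences of $z$ and $\bar u^\alpha$. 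Feeding $D_\xi\bar u^\alpha\to D_\xi\bar u$ into the formula for $D_\xi\psi^\alpha$, once more with a compact/tail split, yields $\|D_\xi\psi^\alpha-D_\xi\psi\|\to 0$ a.s. along the sub-subsequence, hence in probability as $\alpha\to 2$.

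The main obstacle will be the tail control of the half-line memory integrals: since Lemma \ref{lem:3-2.2} only furnishes locally uniform convergence of $z$, every passage to the limit must be paired with the exponential-weight cut-off argument, and one must verify that the sublinear growth of $\int_0^t z(\theta_r\omega_{\ell^\alpha})dr$ from Lemma \ref{lem:3-2.1} is strong enough, relative to the spectral gaps $\beta-\lambda_N$ and $\lambda_{N+1}-\beta$, to render the tails uniformly small in $\alpha$. The $C^1$ step carries the added subtlety that the contraction bound $\|\widetilde{\mathcal{G}}^{\alpha}\|\le\mu<1$ must hold simultaneously for all $\alpha$ near $2$, which is exactly why it is essential that the gap condition \eqref{3-4.2} is independent of $\alpha$.
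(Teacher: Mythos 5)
Your proposal follows the same skeleton as the paper's proof --- reduce to almost sure convergence along a sub-subsequence via Lemma \ref{lem:3-2.2}, subtract the fixed-point equations \eqref{3-4.1} and \eqref{3-4.15}, absorb the Lipschitz part of the difference using the contraction constant $\mu<1$ coming from \eqref{3-4.2}, then differentiate and repeat for the $C^1$ statement --- but it takes a genuinely different route at the one place where the real work happens: the convergence of the forcing integrals over $(-\infty,0]$. You propose a compact-part/tail-part cutoff at $-N$, modelled on the $\mathcal{H}_1,\dots,\mathcal{H}_4$ decomposition of Step 5 of Theorem \ref{th:3-4.1}. The paper instead shifts the exponent: it picks $\delta>0$ so the gap condition still holds for $\beta-j\delta$, $j=1,2,3$, measures $\bar z^\alpha=\bar u^\alpha-\bar u$ in $\mathcal{C}_{\beta-\delta}^{\alpha,-}$ while measuring $\bar u$ in the stronger, $\alpha$-independent norm $\|\bar u\|_{\mathcal{C}_{\beta-3\delta}^{-}}$; the mismatch of weights plants an integrable factor $e^{\delta s}$ inside every half-line integral (see \eqref{3-4.17}--\eqref{3-4.21}), a H\"older step \eqref{3-4.23}--\eqref{3-4.24} handles the singular kernel $(t-s)^{-\sigma}$, and the remaining scalar integrals $\int_{-\infty}^0 e^{\delta s}\bigl(|e^{z(\theta_s\omega_{\ell^\alpha})-z(\theta_s\omega)}-1|+|z(\theta_s\omega_{\ell^\alpha})-z(\theta_s\omega)|\bigr)ds$ are sent to zero by pointwise convergence plus dominated convergence \eqref{3-4.25}. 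The weight-shift buys an $\alpha$-independent majorant and never requires the tail to be small \emph{uniformly in} $\alpha$; your cutoff needs exactly that uniformity, and, as you yourself flag, Lemma \ref{lem:3-2.1} only gives sublinear growth of $z(\theta_s\omega_{\ell^\alpha})$ for each fixed $\alpha$ with $\omega_{\ell^\alpha}$-dependent constants, so the claimed $O(e^{-\delta N})$ tail bound uniform along the sub-subsequence is the one step of your argument that is not justified as written (a cognate issue is buried in the paper's appeal to dominated convergence, but the weight-shift at least isolates it in a single scalar integral). Two smaller differences: the paper gets the $C^0$ conclusion directly from $\psi^{\alpha}(\omega_{\ell^\alpha},\xi)=Q\bar u^{\alpha}(0,\omega_{\ell^\alpha},\xi)$ and the fact that the $\mathcal{C}_{\beta}^{\alpha,-}$ weight equals $1$ at $t=0$, which is shorter than re-running the integral representation of $\psi^{\alpha}$ as you suggest; your $C^1$ step is otherwise the same in spirit, reducing to the convergence of $D_uF(e^{z(\theta_s\omega_{\ell^\alpha})}\bar u^{\alpha}(s))$ via continuity of $D_uF$, exactly as in \eqref{3-4.34}--\eqref{3-4.35}.
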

\begin{proof}
$\mathbf{Step\ 1}$. We prove that
\begin{align}
\label{3-4.16}
\|u^{\alpha}(\cdot, \omega_{\ell^\alpha}, \xi)-u(\cdot, \omega, \xi)\|_{\mathcal{C}_{\beta}^{\alpha,-}}\rightarrow 0
\end{align}
in probability as $\alpha\rightarrow 2$. Choose $\delta>0$ such that $\lambda_{N}<\beta-j\delta<\beta<\lambda_{N+1}$ and
\begin{align*}
L\left( \frac{\lambda_{N}^{\sigma}}{\beta-j\delta-\lambda_{N}}+\frac{\sigma^{\sigma}\Gamma(1-\sigma)}{(\lambda_{N+1}-\beta+j\delta)^{1-\sigma}}+\frac{\lambda_{N+1}^{\sigma}}{\lambda_{N+1}-\beta+j\delta}\right)
\leq\mu<1, j=1,2,3.
\end{align*}
By Theorem \ref{th:3-4.1} and \ref{th:3-4.2}, this condition yields that there exists $\bar{u}^{\alpha}\in \mathcal{C}_{\beta-j\delta}^{\alpha,-}$ and $\bar{u}\in \mathcal{C}_{\beta-j\delta}^{-}$.
\par

For simplicity, we denote $\bar{u}^{\alpha}(t)=\bar{u}^{\alpha}(t, \omega_{\ell^\alpha}, \xi)$ and $\bar{u}(t)=\bar{u}(t, \omega_{\ell^\alpha}, \xi)$. Let $\bar{z}^{\alpha}(t)=\bar{u}^{\alpha}(t)-\bar{u}(t)$. Similar to Lemma \ref{lem:3-4.1}, we can get
\begin{align*}
\bar{z}^{\alpha}(t)=&\int_0^te^{-A(t-s)+\int_{s}^{t}z(\theta_{r}\omega_{\ell^\alpha})dr}P\Big[G(\theta_{s}\omega_{\ell^\alpha}, \bar{u}^{\alpha}(s))-G(\theta_{s}\omega,\bar{u}(s))\\&\ \ \  +\big(z(\theta_{s}\omega_{\ell^\alpha})-z(\theta_{s}\omega)\big)\bar{u}(s)\Big]ds\\&+\int_{-\infty}^te^{-A(t-s)+\int_{s}^{t}z(\theta_{r}\omega_{\ell^\alpha})dr}Q\Big[G(\theta_{s}\omega_{\ell^\alpha}, \bar{u}^{\alpha}(s))-G(\theta_{s}\omega,\bar{u}(s))\\&\ \ \ \ \ +\big(z(\theta_{s}\omega_{\ell^\alpha})-z(\theta_{s}\omega)\big)\bar{u}(s)\Big]ds\\
=&\mathcal{Z}_{1}+\mathcal{Z}_{2}.
\end{align*}
To estimate these integrals, we find that
\begin{align*}
&\|G(\theta_{s}\omega_{\ell^\alpha}, \bar{u}^{\alpha}(s))-G(\theta_{s}\omega,\bar{u}(s))\|\leq L\|\bar{z}^{\alpha}(s)\|_{\sigma}+2L|e^{z(\theta_{s}\omega_{\ell^\alpha})-z(\theta_{s}\omega)}-1|\|\bar{u}(s)\|_{\sigma}.
\end{align*}
For the first term $\mathcal{Z}_{1}$, we have
\begin{align*}
&\nonumber e^{(\beta-\delta)t-\int_{0}^{t}z(\theta_{r}\omega_{\ell^\alpha})dr}\|\mathcal{Z}_{1}\|_{\sigma}\\&\nonumber\leq L\lambda_{N}^{\sigma}\int_t^0e^{-\lambda_{N}(t-s)+(\beta-\delta)(t-s)}\|\bar{z}^{\alpha}\|_{\mathcal{C}_{\beta-\delta}^{\alpha,-}}ds \nonumber
\\&\ \ \ +2L\lambda_{N}^{\sigma}\int_t^0e^{-\lambda_{N}(t-s)+(\beta-\delta)(t-s)}e^{\delta s}|e^{z(\theta_{s}\omega_{\ell^\alpha})-z(\theta_{s}\omega)}-1| \|\bar{u}\|_{\mathcal{C}_{\beta-2\delta}^{\alpha,-}}ds\nonumber
\\&\ \ \ +\lambda_{N}^{\sigma}\int_t^0e^{-\lambda_{N}(t-s)+(\beta-\delta)(t-s)}e^{\delta s}|z(\theta_{s}\omega_{\ell^\alpha})-z(\theta_{s}\omega)| \|\bar{u}\|_{\mathcal{C}_{\beta-2\delta}^{\alpha,-}}ds\nonumber
\\&\nonumber\leq \frac{L\lambda_{N}^{\sigma}}{\beta-\delta-\lambda_{N}}\|\bar{z}^{\alpha}\|_{\mathcal{C}_{\beta-\delta}^{\alpha,-}}
+2L\lambda_{N}^{\sigma}\|\bar{u}\|_{\mathcal{C}_{\beta-2\delta}^{\alpha,-}}\int_t^0e^{\delta s}|e^{z(\theta_{s}\omega_{\ell^\alpha})-z(\theta_{s}\omega)}-1|ds\nonumber
\\&\ \ \
+\lambda_{N}^{\sigma}\|\bar{u}\|_{\mathcal{C}_{\beta-2\delta}^{\alpha,-}}\int_t^0e^{\delta s}|z(\theta_{s}\omega_{\ell^\alpha})-z(\theta_{s}\omega)|ds,
\end{align*}
which implies that
\begin{align}
\label{3-4.17}
\|\mathcal{Z}_{1}\|_{\mathcal{C}_{\beta-\delta}^{\alpha,-}}&\leq \frac{L\lambda_{N}^{\sigma}}{\beta-\delta-\lambda_{N}}\|\bar{z}^{\alpha}\|_{\mathcal{C}_{\beta-\delta}^{\alpha,-}}
+2L\lambda_{N}^{\sigma}\|\bar{u}\|_{\mathcal{C}_{\beta-3\delta}^{-}}\int_{t}^0e^{\delta s}|e^{z(\theta_{s}\omega_{\ell^\alpha})-z(\theta_{s}\omega)}-1|ds\nonumber
\\&\ \ \
+\lambda_{N}^{\sigma}\|\bar{u}\|_{\mathcal{C}_{\beta-3\delta}^{-}}\int_{t}^0e^{\delta s}|z(\theta_{s}\omega_{\ell^\alpha})-z(\theta_{s}\omega)|ds.
\end{align}
For the second term $\mathcal{Z}_{2}$, we get
\begin{align}
\label{3-4.18}
&\nonumber e^{(\beta-\delta)t-\int_{0}^{t}z(\theta_{r}\omega_{\ell^\alpha})dr}\|\mathcal{Z}_{2}\|_{\sigma}\\&\nonumber\leq L\int_{-\infty}^t \Big[ \Big( \frac{\sigma}{t-s} \Big)^{\sigma} + \lambda_{N+1}^{\sigma} \Big] e^{(\beta-\delta-\lambda_{N+1})(t-s)} \|\bar{z}^{\alpha}\|_{\mathcal{C}_{\beta-\delta}^{\alpha,-}}ds \nonumber
\\&\ \ \ +2L\int_{-\infty}^t \Big[ \Big( \frac{\sigma}{t-s} \Big)^{\sigma} + \lambda_{N+1}^{\sigma} \Big] e^{(\beta-\delta-\lambda_{N+1})(t-s)}e^{\delta s}|e^{z(\theta_{s}\omega_{\ell^\alpha})-z(\theta_{s}\omega)}-1| \|\bar{u}\|_{\mathcal{C}_{\beta-2\delta}^{\alpha,-}}ds\nonumber
\\&\ \ \ +\int_{-\infty}^t \Big[ \Big( \frac{\sigma}{t-s} \Big)^{\sigma} + \lambda_{N+1}^{\sigma} \Big] e^{(\beta-\delta-\lambda_{N+1})(t-s)}e^{\delta s}|z(\theta_{s}\omega_{\ell^\alpha})-z(\theta_{s}\omega)| \|\bar{u}\|_{\mathcal{C}_{\beta-2\delta}^{\alpha,-}}ds\nonumber
\\& =\mathcal{Z}_{21}+\mathcal{Z}_{22}+\mathcal{Z}_{23}
\end{align}
To deal with $\mathcal{Z}_{2}$, we estimate each of the above three integrals. We first estimate
\begin{align}
\label{3-4.19}
\mathcal{Z}_{21}&=L\int_{-\infty}^t \Big[ \Big( \frac{\sigma}{t-s} \Big)^{\sigma} + \lambda_{N+1}^{\sigma} \Big] e^{(\beta-\delta-\lambda_{N+1})(t-s)} \|\bar{z}^{\alpha}\|_{\mathcal{C}_{\beta-\delta}^{\alpha,-}}ds \nonumber
\\&
\leq L\left(\frac{\sigma^{\sigma}\Gamma(1-\sigma)}{(\lambda_{N+1}-\beta+\delta)^{1-\sigma}}
+\frac{\lambda_{N+1}^{\sigma}}{\lambda_{N+1}-\beta+\delta}\right)\|\bar{z}^{\alpha}\|_{\mathcal{C}_{\beta-\delta}^{\alpha,-}}.
\end{align}
For the second integral $\mathcal{Z}_{22}$, we have
\begin{align}
\label{3-4.20}
\mathcal{Z}_{22}&=2L\int_{-\infty}^t \Big[ \Big( \frac{\sigma}{t-s} \Big)^{\sigma} + \lambda_{N+1}^{\sigma} \Big] e^{(\beta-\delta-\lambda_{N+1})(t-s)}e^{\delta s}|e^{z(\theta_{s}\omega_{\ell^\alpha})-z(\theta_{s}\omega)}-1| \|\bar{u}\|_{\mathcal{C}_{\beta-2\delta}^{\alpha,-}}ds\nonumber
\\&
\leq 2L \sigma^{\sigma}\|\bar{u}\|_{\mathcal{C}_{\beta-3\delta}^{-}}\int_{-\infty}^t (t-s)^{-\sigma} e^{(\beta-\delta-\lambda_{N+1})(t-s)}e^{\delta s}|e^{z(\theta_{s}\omega_{\ell^\alpha})-z(\theta_{s}\omega)}-1|ds\nonumber
\\&\ \ \ +2L\lambda_{N+1}^{\sigma}\|\bar{u}\|_{\mathcal{C}_{\beta-3\delta}^{-}}\int_{-\infty}^te^{\delta s}|e^{z(\theta_{s}\omega_{\ell^\alpha})-z(\theta_{s}\omega)}-1|ds.
\end{align}
Similarly, we have
\begin{align}
\label{3-4.21}
\mathcal{Z}_{23}&=\int_{-\infty}^t \Big[ \Big( \frac{\sigma}{t-s} \Big)^{\sigma} + \lambda_{N+1}^{\sigma} \Big] e^{(\beta-\delta-\lambda_{N+1})(t-s)}e^{\delta s}|z(\theta_{s}\omega_{\ell^\alpha})-z(\theta_{s}\omega)| \|\bar{u}\|_{\mathcal{C}_{\beta-2\delta}^{\alpha,-}}ds\nonumber
\\&
\leq \sigma^{\sigma}\|\bar{u}\|_{\mathcal{C}_{\beta-3\delta}^{-}}\int_{-\infty}^t (t-s)^{-\sigma} e^{(\beta-\delta-\lambda_{N+1})(t-s)}e^{\delta s}|z(\theta_{s}\omega_{\ell^\alpha})-z(\theta_{s}\omega)|ds\nonumber
\\&\ \ \ +\lambda_{N+1}^{\sigma}\|\bar{u}\|_{\mathcal{C}_{\beta-3\delta}^{-}}\int_{-\infty}^te^{\delta s}|z(\theta_{s}\omega_{\ell^\alpha})-z(\theta_{s}\omega)|ds.
\end{align}
Combining with \eqref{3-4.17}-\eqref{3-4.21}, we obtain
\begin{align}
\label{3-4.22}
\|\bar{z}^{\alpha}\|_{\mathcal{C}_{\beta-\delta}^{\alpha,-}}&\leq \mu\|\bar{z}^{\alpha}\|_{\mathcal{C}_{\beta-\delta}^{\alpha,-}} +2L\lambda_{N+1}^{\sigma}\|\bar{u}\|_{\mathcal{C}_{\beta-3\delta}^{-}}\int_{-\infty}^0e^{\delta s}|e^{z(\theta_{s}\omega_{\ell^\alpha})-z(\theta_{s}\omega)}-1|ds\nonumber
\\&\ \ \
+\lambda_{N+1}^{\sigma}\|\bar{u}\|_{\mathcal{C}_{\beta-3\delta}^{-}}\int_{-\infty}^0e^{\delta s}|z(\theta_{s}\omega_{\ell^\alpha})-z(\theta_{s}\omega)|ds\nonumber
\\&\ \ \
+2L \sigma^{\sigma}\|\bar{u}\|_{\mathcal{C}_{\beta-3\delta}^{-}}\int_{-\infty}^t (t-s)^{-\sigma} e^{(\beta-\delta-\lambda_{N+1})(t-s)}e^{\delta s}|e^{z(\theta_{s}\omega_{\ell^\alpha})-z(\theta_{s}\omega)}-1|ds\nonumber
\\&\ \ \
+\sigma^{\sigma}\|\bar{u}\|_{\mathcal{C}_{\beta-3\delta}^{-}}\int_{-\infty}^t (t-s)^{-\sigma} e^{(\beta-\delta-\lambda_{N+1})(t-s)}e^{\delta s}|z(\theta_{s}\omega_{\ell^\alpha})-z(\theta_{s}\omega)|ds.
\end{align}\par
As $0<\sigma<1$, we choose $0<q_{1}<\frac{1}{\sigma}$ and let $q_{2}$ satisfies $ \frac{1}{q_{1}}+\frac{1}{q_{2}}=1$. Using the  H\"{o}lder inequality, we get
\begin{align}
\label{3-4.23}
&\int_{-\infty}^t (t-s)^{-\sigma} e^{(\beta-\delta-\lambda_{N+1})(t-s)}e^{\delta s}|e^{z(\theta_{s}\omega_{\ell^\alpha})-z(\theta_{s}\omega)}-1|ds\nonumber
\\&\leq\left\{\int_{-\infty}^t (t-s)^{-\sigma q_{1}} e^{q_{1}(\beta-\delta-\lambda_{N+1})(t-s)} ds \right\}^{\frac{1}{q_{1}}}
\left\{\int_{-\infty}^t e^{q_{2}\delta s}
|e^{z(\theta_{s}\omega_{\ell^\alpha})-z(\theta_{s}\omega)}-1|^{q_{2}} ds \right\}^{\frac{1}{q_{2}}}\nonumber
\\&\leq\left\{ \frac{ \Gamma(1-\sigma q_{1}) }{ [q_{1}(\lambda_{N+1}-\beta+\delta)]^{1-\sigma q_{1}} } \right\}^{\frac{1}{q_{1}}}\left\{\int_{-\infty}^0 e^{q_{2}\delta s}
|e^{z(\theta_{s}\omega_{\ell^\alpha})-z(\theta_{s}\omega)}-1|^{q_{2}} ds \right\}^{\frac{1}{q_{2}}},
\end{align}
and
\begin{align}
\label{3-4.24}
&\int_{-\infty}^t (t-s)^{-\sigma} e^{(\beta-\delta-\lambda_{N+1})(t-s)}e^{\delta s}|z(\theta_{s}\omega_{\ell^\alpha})-z(\theta_{s}\omega)|ds\nonumber
\\&\leq\left\{\int_{-\infty}^t (t-s)^{-\sigma q_{1}} e^{q_{1}(\beta-\delta-\lambda_{N+1})(t-s)} ds \right\}^{\frac{1}{q_{1}}}
\left\{\int_{-\infty}^t e^{q_{2}\delta s}
|z(\theta_{s}\omega_{\ell^\alpha})-z(\theta_{s}\omega)|^{q_{2}} ds \right\}^{\frac{1}{q_{2}}}\nonumber
\\&\leq\left\{ \frac{ \Gamma(1-\sigma q_{1}) }{ [q_{1}(\lambda_{N+1}-\beta+\delta)]^{1-\sigma q_{1}} } \right\}^{\frac{1}{q_{1}}}\left\{\int_{-\infty}^0 e^{q_{2}\delta s}
|z(\theta_{s}\omega_{\ell^\alpha})-z(\theta_{s}\omega)|^{q_{2}} ds \right\}^{\frac{1}{q_{2}}},
\end{align}
As $\sigma=0$, we have
\begin{align}
\label{3-4.23a}
&\int_{-\infty}^t (t-s)^{-\sigma} e^{(\beta-\delta-\lambda_{N+1})(t-s)}e^{\delta s}|e^{z(\theta_{s}\omega_{\ell^\alpha})-z(\theta_{s}\omega)}-1|ds\nonumber
\\&\leq \int_{-\infty}^0 e^{\delta s}
|e^{z(\theta_{s}\omega_{\ell^\alpha})-z(\theta_{s}\omega)}-1| ds,
\end{align}
and
\begin{align}
\label{3-4.24a}
&\int_{-\infty}^t (t-s)^{-\sigma} e^{(\beta-\delta-\lambda_{N+1})(t-s)}e^{\delta s}|z(\theta_{s}\omega_{\ell^\alpha})-z(\theta_{s}\omega)|ds\nonumber
\\&\leq\int_{-\infty}^0 e^{\delta s}
|z(\theta_{s}\omega_{\ell^\alpha})-z(\theta_{s}\omega)| ds ,
\end{align}\par
By Lemma \ref{lem:3-2.2}, we have for each subsequence $\{\alpha_n\}_{n \in \mathbb{N}}$, there exists a sub-subsequence $\{\alpha_{n(k)}\}_{k \in \mathbb{N}}$ such that $z(\theta_s \omega_{\ell^{\alpha_{n(k)}}})$ converges to $z(\theta_s \omega)$ in $D_u([-T,0], \mathbb{R})$, almost surely, i.e., for every $T > 0$,
\begin{align*}
\lim_{k \to \infty} \sup_{s \in [-T,0]}  | z(\theta_s \omega_{\ell^{\alpha_{n(k)}}}) - z(\theta_s \omega) | = 0,
\end{align*}
which implies that for every $s < 0$,
\begin{align*}
| z(\theta_s \omega_{\ell^{\alpha_{n(k)}}}) - z(\theta_s \omega) | \to 0 \ \  \text{almost surely}.
\end{align*}
Along with Lemma \ref{lem:3-2.1} and the dominated convergence theorem, we have
\begin{align*}
\int_{-\infty}^0 e^{\delta s} \Big[|e^{z(\theta_s \omega_{\ell^{\alpha_{n(k)}}})- z(\theta_s \omega)} -1|+ | z(\theta_s \omega_{\ell^{\alpha_{n(k)}}}) - z(\theta_s \omega) |\Big]ds \to 0,
\end{align*}
almost surely. Thus
\begin{align}
\label{3-4.25}
\int_{-\infty}^0 e^{\delta s} \Big[ | e^{z(\theta_s \omega_{\ell^{\alpha}})- z(\theta_s \omega)} -1|+ | z(\theta_s \omega_{\ell^{\alpha}}) - z(\theta_s \omega) |\Big]ds \to 0
\end{align}
in probability as $\alpha\rightarrow2$. Moreover, we have \begin{align}
\label{3-4.25a}
\int_{-\infty}^0 e^{q_{2}\delta s} \Big[ | e^{z(\theta_s \omega_{\ell^{\alpha}})- z(\theta_s \omega)} -1|^{q_{2}}+ | z(\theta_s \omega_{\ell^{\alpha}}) - z(\theta_s \omega) |^{q_{2}}\Big]ds \to 0
\end{align}
in probability as $\alpha\rightarrow2$. Together with \eqref{3-4.22}-\eqref{3-4.25a}, we obtain that
\begin{align*}
\|\bar{z}^{\alpha}\|_{\mathcal{C}_{\beta}^{\alpha,-}}\leq\|\bar{z}^{\alpha}\|_{\mathcal{C}_{\beta-\delta}^{\alpha,-}} \rightarrow 0
\end{align*}
in probability as $\alpha\rightarrow2$. Therefore,
\begin{align*}
\|\bar{u}^{\alpha}(\cdot, \omega_{\ell^\alpha}, \xi)-\bar{u}(\cdot, \omega, \xi)\|_{\mathcal{C}_{\beta}^{\alpha,-}}\rightarrow 0
\end{align*}
in probability as $\alpha\rightarrow2$.\\
$\mathbf{Step\ 2}$. We show that
\begin{align}
\label{3-4.26}
\|D_{\xi}\bar{u}^{\alpha}(\cdot, \omega_{\ell^\alpha}, \xi)-D_{\xi}\bar{u}(\cdot, \omega, \xi)\|_{\mathcal{L}(PD(A^{\sigma}), \mathcal{C}_{\beta}^{\alpha,-})}\rightarrow 0
\end{align}
in probability as $\alpha\rightarrow2$.\par
Note that for $t\leq0$,
\begin{align*}
D_{\xi}\bar{z}^{\alpha}(t)=&\int_0^te^{-A(t-s)+\int_{0}^{t}z(\theta_{r}\omega_{\ell^\alpha})dr}P\Big[ D_{u}G(\theta_{s}\omega_{\ell^\alpha},\bar{u}^{\alpha}(s))D_{\xi}\bar{z}^{\alpha}(s)\\&\ \ \ +\big(D_{u}G(\theta_{s}\omega_{\ell^\alpha},\bar{u}^{\alpha}(s))-D_{u}G(\theta_{s}\omega,\bar{u}(s))\big) D_{\xi}\bar{u}(s)\\&\ \ \ +(z(\theta_{s}\omega_{\ell^\alpha})-z(\theta_{s}\omega))D_{\xi}\bar{u}(s)\Big]ds\\& +\int_{-\infty}^te^{-A(t-s)+\int_{0}^{t}z(\theta_{r}\omega_{\ell^\alpha})dr}Q\Big[ D_{u}G(\theta_{s}\omega_{\ell^\alpha},\bar{u}^{\alpha}(s))D_{\xi}\bar{z}^{\alpha}(s)\\&\ \ \ +\big(D_{u}G(\theta_{s}\omega_{\ell^\alpha},\bar{u}^{\alpha}(s))-D_{u}G(\theta_{s}\omega,\bar{u}(s))\big) D_{\xi}\bar{u}(s)\\&\ \ \ +(z(\theta_{s}\omega_{\ell^\alpha})-z(\theta_{s}\omega))D_{\xi}\bar{u}(s)\Big]ds.
\end{align*}
We denote the above two integrals by $ \mathcal{I}_{1}$ and $\mathcal{I}_{2}$, respectively. We first estimate $ \mathcal{I}_{1}$,
\begin{align}
\label{3-4.27}
&\nonumber e^{(\beta-\delta)t-\int_{0}^{t}z(\theta_{r}\omega_{\ell^\alpha})dr}\|\mathcal{I}_{1}\|_{\sigma}\\&\nonumber\leq L\lambda_{N}^{\sigma}\|D_{\xi}\bar{z}^{\alpha}\|_{\mathcal{L}(PD(A^{\sigma}), \mathcal{C}_{\beta-\delta}^{\alpha,-})}\int_t^0e^{(\beta-\delta-\lambda_{N})(t-s)}ds\nonumber
\\&\ \ \ +\lambda_{N}^{\sigma}\|D_{\xi}\bar{u}\|_{\mathcal{L}(PD(A^{\sigma}), \mathcal{C}_{\beta-2\delta}^{\alpha,-})}\int_t^0e^{(\beta-\delta-\lambda_{N})(t-s)}e^{\delta s}\nonumber
\\&\ \ \ \ \ \ \ \ \times\|D_{u}F(e^{z(\theta_{t}\omega_{\ell^\alpha})}\bar{u}^{\alpha}(s))-D_{u}F(e^{z(\theta_{t}\omega)}\bar{u}(s))\|ds\nonumber
\\&\ \ \ \nonumber +\|D_{\xi}\bar{u}\|_{\mathcal{L}(PD(A^{\sigma}), \mathcal{C}_{\beta-2\delta}^{\alpha,-})}\int_t^0e^{(\beta-\delta-\lambda_{N})(t-s)}e^{\delta s}|z(\theta_{s}\omega_{\ell^\alpha})-z(\theta_{s}\omega)|ds
\\&\nonumber\leq \frac{L\lambda_{N}^{\sigma}}{\beta-\delta-\lambda_{N}}\|D_{\xi}\bar{z}^{\alpha}\|_{\mathcal{L}(PD(A^{\sigma}), \mathcal{C}_{\beta-\delta}^{\alpha,-})}\nonumber
\\&\ \ \ +\lambda_{N}^{\sigma}\|D_{\xi}\bar{u}\|_{\mathcal{L}(PD(A^{\sigma}), \mathcal{C}_{\beta-3\delta}^{-})}\int_t^0e^{\delta s} \|D_{u}F(e^{z(\theta_{t}\omega_{\ell^\alpha})}\bar{u}^{\alpha}(s))-D_{u}F(e^{z(\theta_{t}\omega)}\bar{u}(s))\|ds\nonumber
\\&\ \ \  +\lambda_{N}^{\sigma}\|D_{\xi}\bar{u}\|_{\mathcal{L}(PD(A^{\sigma}), \mathcal{C}_{\beta-3\delta}^{-})}\int_t^0e^{\delta s} |z(\theta_{s}\omega_{\ell^\alpha})-z(\theta_{s}\omega)|ds.
\end{align}
Similarly, by \eqref{3-2.1} and \eqref{3-2.2}, we obtain
\begin{align}
\label{3-4.28}
&\nonumber e^{(\beta-\delta)t-\int_{0}^{t}z(\theta_{r}\omega_{\ell^\alpha})dr}\|\mathcal{I}_{2}\|_{\sigma}\\&\nonumber\leq L\|D_{\xi}\bar{z}^{\alpha}\|_{\mathcal{L}(PD(A^{\sigma}), \mathcal{C}_{\beta-\delta}^{\alpha,-})}\int_{-\infty}^t \Big[ \Big( \frac{\sigma}{t-s} \Big)^{\sigma} + \lambda_{N+1}^{\sigma} \Big] e^{(\beta-\delta-\lambda_{N+1})(t-s)} ds \nonumber
\\&\ \ \ +\|D_{\xi}\bar{u}\|_{\mathcal{L}(PD(A^{\sigma}), \mathcal{C}_{\beta-2\delta}^{\alpha,-})}\int_{-\infty}^t \Big[ \Big( \frac{\sigma}{t-s} \Big)^{\sigma} + \lambda_{N+1}^{\sigma} \Big] e^{(\beta-\delta-\lambda_{N+1})(t-s)}e^{\delta s}\nonumber
\\&\ \ \ \ \ \ \ \ \times\|D_{u}F(e^{z(\theta_{t}\omega_{\ell^\alpha})}\bar{u}^{\alpha}(s))-D_{u}F(e^{z(\theta_{t}\omega)}\bar{u}(s))\|ds\nonumber
\\&\ \ \ +\|D_{\xi}\bar{u}\|_{\mathcal{L}(PD(A^{\sigma}), \mathcal{C}_{\beta-2\delta}^{\alpha,-})}\int_{-\infty}^t \Big[ \Big( \frac{\sigma}{t-s} \Big)^{\sigma} + \lambda_{N+1}^{\sigma} \Big] e^{(\beta-\delta-\lambda_{N+1})(t-s)}e^{\delta s}|z(\theta_{s}\omega_{\ell^\alpha})-z(\theta_{s}\omega)| ds\nonumber
\\& =\mathcal{I}_{21}+\mathcal{I}_{22}+\mathcal{I}_{23}.
\end{align}
For the first integral $\mathcal{I}_{21}$, we find
\begin{align}
\label{3-4.29}
\mathcal{I}_{21}\leq L\left(\frac{\sigma^{\sigma}\Gamma(1-\sigma)}{(\lambda_{N+1}-\beta+\delta)^{1-\sigma}}
+\frac{\lambda_{N+1}^{\sigma}}{\lambda_{N+1}-\beta+\delta}\right)\|D_{\xi}\bar{z}^{\alpha}\|_{\mathcal{L}(PD(A^{\sigma}), \mathcal{C}_{\beta-\delta}^{\alpha,-})}.
\end{align}
We now estimate the second term $\mathcal{I}_{22}$,
\begin{align}
\label{3-4.30}
\mathcal{I}_{22}&\leq \sigma^{\sigma}\|D_{\xi}\bar{u}\|_{\mathcal{L}(PD(A^{\sigma}), \mathcal{C}_{\beta-3\delta}^{-})}\int_{-\infty}^t (t-s)^{-\sigma} e^{(\beta-\delta-\lambda_{N+1})(t-s)}e^{\delta s}\nonumber
\\&\ \ \ \ \ \ \ \ \times\|D_{u}F(e^{z(\theta_{t}\omega_{\ell^\alpha})}\bar{u}^{\alpha}(s))-D_{u}F(e^{z(\theta_{t}\omega)}\bar{u}(s))\|ds\nonumber
\\&\ \ \ +\lambda_{N+1}^{\sigma}\|D_{\xi}\bar{u}\|_{\mathcal{L}(PD(A^{\sigma}), \mathcal{C}_{\beta-3\delta}^{-})}\int_{-\infty}^te^{\delta s}\nonumber
\\&\ \ \ \ \ \ \ \ \times\|D_{u}F(e^{z(\theta_{t}\omega_{\ell^\alpha})}\bar{u}^{\alpha}(s))-D_{u}F(e^{z(\theta_{t}\omega)}\bar{u}(s))\|ds.
\end{align}
Using the same arguments for $\mathcal{I}_{22}$, we have
\begin{align}
\label{3-4.31}
\mathcal{I}_{23}&\leq \sigma^{\sigma}\|D_{\xi}\bar{u}\|_{\mathcal{L}(PD(A^{\sigma}), \mathcal{C}_{\beta-3\delta}^{-})}\int_{-\infty}^t (t-s)^{-\sigma} e^{(\beta-\delta-\lambda_{N+1})(t-s)}e^{\delta s}|z(\theta_{s}\omega_{\ell^\alpha})-z(\theta_{s}\omega)|ds\nonumber
\\&\ \ \ +\lambda_{N+1}^{\sigma}\|D_{\xi}\bar{u}\|_{\mathcal{L}(PD(A^{\sigma}), \mathcal{C}_{\beta-3\delta}^{-})}\int_{-\infty}^te^{\delta s}|z(\theta_{s}\omega_{\ell^\alpha})-z(\theta_{s}\omega)|ds.
\end{align}
Combining \eqref{3-4.27}-\eqref{3-4.31}, we get
\begin{align}
\label{3-4.32}
&\|D_{\xi}\bar{z}^{\alpha}(t)\|_{\mathcal{L}(PD(A^{\sigma}), \mathcal{C}_{\beta-\delta}^{\alpha,-})}\nonumber
\\&\leq \mu\|D_{\xi}\bar{z}^{\alpha}(t)\|_{\mathcal{L}(PD(A^{\sigma}), \mathcal{C}_{\beta-\delta}^{\alpha,-})} +\lambda_{N+1}^{\sigma}\|D_{\xi}\bar{u}\|_{\mathcal{L}(PD(A^{\sigma}), \mathcal{C}_{\beta-3\delta}^{-})}\int_{-\infty}^0e^{\delta s}\nonumber
\\&\ \ \ \ \ \ \ \ \times\|D_{u}F(e^{z(\theta_{t}\omega_{\ell^\alpha})}\bar{u}^{\alpha}(s))-D_{u}F(e^{z(\theta_{t}\omega)}\bar{u}(s))\|ds\nonumber
\\&\ \ \
+\lambda_{N+1}^{\sigma}\|D_{\xi}\bar{u}\|_{\mathcal{L}(PD(A^{\sigma}), \mathcal{C}_{\beta-3\delta}^{-})}\int_{-\infty}^0e^{\delta s}|z(\theta_{s}\omega_{\ell^\alpha})-z(\theta_{s}\omega)|ds\nonumber
\\&\ \ \
+\sigma^{\sigma}\|D_{\xi}\bar{u}\|_{\mathcal{L}(PD(A^{\sigma}), \mathcal{C}_{\beta-3\delta}^{-})}\int_{-\infty}^t (t-s)^{-\sigma} e^{(\beta-\delta-\lambda_{N+1})(t-s)}e^{\delta s}\nonumber
\\&\ \ \ \ \ \ \ \ \times\|D_{u}F(e^{z(\theta_{t}\omega_{\ell^\alpha})}\bar{u}^{\alpha}(s))-D_{u}F(e^{z(\theta_{t}\omega)}\bar{u}(s))\|ds\nonumber
\\&\ \ \
+\sigma^{\sigma}\|D_{\xi}\bar{u}\|_{\mathcal{L}(PD(A^{\sigma}), \mathcal{C}_{\beta-3\delta}^{-})}\int_{-\infty}^t (t-s)^{-\sigma} e^{(\beta-\delta-\lambda_{N+1})(t-s)}e^{\delta s}|z(\theta_{s}\omega_{\ell^\alpha})-z(\theta_{s}\omega)|ds.
\end{align}
As \eqref{3-4.23} and \eqref{3-4.24}, for $0<\sigma<1$, by using the  H\"{o}lder inequality, we get
\begin{align}
\label{3-4.33}
&\int_{-\infty}^t (t-s)^{-\sigma} e^{(\beta-\delta-\lambda_{N+1})(t-s)}e^{\delta s}\|D_{u}F(e^{z(\theta_{t}\omega_{\ell^\alpha})}\bar{u}^{\alpha}(s))-D_{u}F(e^{z(\theta_{t}\omega)}\bar{u}(s))\|ds\nonumber
\\&\leq\left\{\int_{-\infty}^t (t-s)^{-\sigma q_{1}} e^{q_{1}(\beta-\delta-\lambda_{N+1})(t-s)} ds \right\}^{\frac{1}{q_{1}}}\nonumber
\\&\ \ \ \ \ \ \ \ \times
\left\{\int_{-\infty}^t e^{q_{2}\delta s}
\|D_{u}F(e^{z(\theta_{t}\omega_{\ell^\alpha})}\bar{u}^{\alpha}(s))-D_{u}F(e^{z(\theta_{t}\omega)}\bar{u}(s))\|^{q_{2}} ds \right\}^{\frac{1}{q_{2}}}\nonumber
\\&\leq\left\{ \frac{ \Gamma(1-\sigma q_{1}) }{ [q_{1}(\lambda_{N+1}-\beta+\delta)]^{1-\sigma q_{1}} } \right\}^{\frac{1}{q_{1}}}\nonumber
\\&\ \ \ \ \ \ \ \ \times\left\{\int_{-\infty}^0 e^{q_{2}\delta s}
\|D_{u}F(e^{z(\theta_{t}\omega_{\ell^\alpha})}\bar{u}^{\alpha}(s))-D_{u}F(e^{z(\theta_{t}\omega)}\bar{u}(s))\|^{q_{2}} ds \right\}^{\frac{1}{q_{2}}}.
\end{align}
For $\sigma=0$, we have
\begin{align}
\label{3-4.33a}
&\int_{-\infty}^t (t-s)^{-\sigma} e^{(\beta-\delta-\lambda_{N+1})(t-s)}e^{\delta s}\|D_{u}F(e^{z(\theta_{t}\omega_{\ell^\alpha})}\bar{u}^{\alpha}(s))-D_{u}F(e^{z(\theta_{t}\omega)}\bar{u}(s))\|ds\nonumber
\\&\leq\int_{-\infty}^0 e^{\delta s}
\|D_{u}F(e^{z(\theta_{t}\omega_{\ell^\alpha})}\bar{u}^{\alpha}(s))-D_{u}F(e^{z(\theta_{t}\omega)}\bar{u}(s))\| ds .
\end{align}
\par
According to the discussion in Step\ 1, for every $s \in (-\infty,0]$ and the sub-subsequence $\{\alpha_{n(k)}\}_{k \in \mathbb{N}}$, we have
\begin{align*}
e^{z(\theta_s \omega_{\ell^{\alpha_{n(k)}}})}\bar{u}^{\alpha_{n(k)}}(s)\rightarrow  e^{z(\theta_{t}\omega)}\bar{u}(s)  \ \  \text{almost surely}.
\end{align*}
It follows from the continuity of $D_{u}F$  that
\begin{align*}
\|D_{u}F(e^{z(\theta_s \omega_{\ell^{\alpha_{n(k)}}})}\bar{u}^{\alpha_{n(k)}}(s))-D_{u}F(e^{z(\theta_{t}\omega)}\bar{u}(s))\| \to 0 \ \ \text{almost surely}.
\end{align*}
By the dominated convergence theorem, we get
\begin{align*}
\int^{+\infty}_0e^{-\sigma s}\|D_{u}F(e^{z(\theta_s \omega_{\ell^{\alpha_{n(k)}}})}\bar{u}^{\alpha_{n(k)}}(s))-D_{u}F(e^{z(\theta_{t}\omega)}\bar{u}(s))\|ds \to 0 \ \  \text{almost surely}.
\end{align*}
This implies that
\begin{align}
\label{3-4.34}
\int_{-\infty}^0 e^{\delta s}
\|D_{u}F(e^{z(\theta_{t}\omega_{\ell^\alpha})}\bar{u}^{\alpha}(s))-D_{u}F(e^{z(\theta_{t}\omega)}\bar{u}(s))\| ds \to 0
\end{align}
in probability as $\alpha\rightarrow2$. Based on the above analysis, we can also conclude that
\begin{align}
\label{3-4.35}
\int_{-\infty}^0 e^{q_{2}\delta s}
\|D_{u}F(e^{z(\theta_{t}\omega_{\ell^\alpha})}\bar{u}^{\alpha}(s))-D_{u}F(e^{z(\theta_{t}\omega)}\bar{u}(s))\|^{q_{2}} ds \to 0
\end{align}
in probability as $\alpha\rightarrow2$.
\par
Together with  \eqref{3-4.24}, \eqref{3-4.24a}-\eqref{3-4.25} and \eqref{3-4.32}-\eqref{3-4.35}, we obtain
\begin{align*}
\|D_{\xi}\bar{z}^{\alpha}(t)\|_{\mathcal{L}(PD(A^{\sigma}), \mathcal{C}_{\beta}^{\alpha,-})}\leq \|D_{\xi}\bar{z}^{\alpha}(t)\|_{\mathcal{L}(PD(A^{\sigma}), \mathcal{C}_{\beta-\delta}^{\alpha,-})} \to 0
\end{align*}
in probability as $\alpha\rightarrow2$. Therefore
\begin{align*}
\|D_{\xi}\bar{u}^{\alpha}(\cdot, \omega_{\ell^{\alpha}}, \xi)-D_{\xi}\bar{u}(\cdot, \omega, \xi)\|_{\mathcal{L}(PD(A^{\sigma}), \mathcal{C}_{\beta}^{\alpha,-})}\to 0
\end{align*}
in probability as $\alpha\rightarrow2$. \\
$\mathbf{Step\ 3}$. We prove that the convergence of $\psi^{\alpha}(\omega_{\ell^\alpha}, \xi)$ and $D_{\xi}\psi^{\alpha}(\omega_{\ell^\alpha}, \xi)$.\par
Since $\psi^{\alpha}(\omega_{\ell^\alpha}, \xi)=Q\bar{u}^{\alpha}(0, \omega_{\ell^\alpha}, \xi)$, by \eqref{3-4.16} and \eqref{3-4.26}, we have
\begin{align*}
\|\psi^{\alpha}(\omega_{\ell^\alpha}, \xi)-\psi(\omega, \xi)\|\leq& \|Q\|\cdot\|\bar{u}^{\alpha}(\cdot, \omega_{\ell^\alpha}, \xi)-\bar{u}(\cdot, \omega, \xi)\|_{\mathcal{C}_{\beta}^{\alpha,-}}\to 0
\end{align*}
in probability as $\alpha\rightarrow 2$, and
\begin{align*}
\|D_{\xi}\psi^{\alpha}(\omega_{\ell^\alpha}, \xi)-D_{\xi}\psi(\omega, \xi)\|\leq& \|Q\|\cdot\|D_{\xi}\bar{u}^{\alpha}(\cdot, \omega_{\ell^\alpha}, \xi)-D_{\xi}\bar{u}(\cdot, \omega, \xi)\|_{\mathcal{L}(PD(A^{\sigma}), \mathcal{C}_{\beta}^{\alpha,-})}\to 0
\end{align*}
in probability as $\alpha\rightarrow 2$. This completes the proof.
\end{proof}
\par
The following theorem shows that the convergence of $\widetilde{\mathscr{M}}^{\alpha}(\omega_{\ell^\alpha})$ and $\widetilde{\mathscr{M}}(\omega)$ as $\alpha$ tends to $2$.

\begin{thm}
\label{th: 3-4.4}
Assume that the same conditions in Theorem \ref{th:3-4.1} hold. Then
\begin{align*}
\|e^{z(\omega_{\ell^\alpha})}\psi^{\alpha}(\omega_{\ell^\alpha}, e^{-z(\omega_{\ell^\alpha})}\xi)- e^{z(\omega)}\psi(\omega, e^{-z(\omega)}\xi)\|_{\sigma} \rightarrow 0
\end{align*}
in probability as $\alpha\rightarrow 2$. Furthermore,
\begin{align*}
\|D_{\xi}\big(e^{z(\omega_{\ell^\alpha})}\psi^{\alpha}(\omega_{\ell^\alpha}, e^{-z(\omega_{\ell^\alpha})}\xi)\big)- D_{\xi}\big(e^{z(\omega)}\psi(\omega, e^{-z(\omega)}\xi)\big)\|\rightarrow 0
\end{align*}
in probability as $\alpha\rightarrow 2$.
\end{thm}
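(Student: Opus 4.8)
The plan is to deduce both assertions from Theorem~\ref{th:3-4.3} together with the convergence of the scalar prefactors $e^{\pm z(\omega_{\ell^\alpha})}$, using triangle-inequality decompositions that separate the prefactor, the graph map, and its argument. Two structural facts drive everything. First, $z$ is scalar-valued, so $e^{\pm z(\omega_{\ell^\alpha})}$ are random scalars; taking $t=0$ in Lemma~\ref{lem:3-2.2} gives $z(\omega_{\ell^\alpha})\to z(\omega)$ in probability, whence, by the continuous mapping theorem, $e^{\pm z(\omega_{\ell^\alpha})}\to e^{\pm z(\omega)}$ in probability, and in particular the family $\{e^{z(\omega_{\ell^\alpha})}\}$ is tight. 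Second, the bound \eqref{3-4.4} shows that $\psi^{\alpha}(\omega_{\ell^\alpha},\cdot)$ is Lipschitz with constant $\frac{\mu}{2(1-\mu)}$, which is \emph{independent of} $\alpha$; this lets me move the argument of $\psi^\alpha$ for free.

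For the first assertion I would write
\begin{align*}
&e^{z(\omega_{\ell^\alpha})}\psi^{\alpha}(\omega_{\ell^\alpha}, e^{-z(\omega_{\ell^\alpha})}\xi)- e^{z(\omega)}\psi(\omega, e^{-z(\omega)}\xi)\\
&\quad= e^{z(\omega_{\ell^\alpha})}\big[\psi^{\alpha}(\omega_{\ell^\alpha}, e^{-z(\omega_{\ell^\alpha})}\xi)-\psi(\omega, e^{-z(\omega)}\xi)\big]+\big(e^{z(\omega_{\ell^\alpha})}-e^{z(\omega)}\big)\psi(\omega, e^{-z(\omega)}\xi),
\end{align*}
and treat the inner bracket by inserting $\psi^{\alpha}(\omega_{\ell^\alpha}, e^{-z(\omega)}\xi)$. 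The Lipschitz bound \eqref{3-4.4} controls
\begin{align*}
\big\|\psi^{\alpha}(\omega_{\ell^\alpha}, e^{-z(\omega_{\ell^\alpha})}\xi)-\psi^{\alpha}(\omega_{\ell^\alpha}, e^{-z(\omega)}\xi)\big\|_{\sigma}\leq \frac{\mu}{2(1-\mu)}\big|e^{-z(\omega_{\ell^\alpha})}-e^{-z(\omega)}\big|\,\|\xi\|_{\sigma},
\end{align*}
which tends to $0$ in probability, while $\psi^{\alpha}(\omega_{\ell^\alpha}, e^{-z(\omega)}\xi)-\psi(\omega, e^{-z(\omega)}\xi)\to0$ in probability is exactly Theorem~\ref{th:3-4.3} at the \emph{fixed} argument $e^{-z(\omega)}\xi$. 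Since $e^{z(\omega_{\ell^\alpha})}$ is tight and $e^{z(\omega_{\ell^\alpha})}-e^{z(\omega)}\to0$ in probability, both displayed summands vanish in probability, proving the first claim.

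For the derivative statement the scalar nature of $z$ is decisive: by the chain rule the two scalar factors cancel, so
\begin{align*}
D_{\xi}\big(e^{z(\omega_{\ell^\alpha})}\psi^{\alpha}(\omega_{\ell^\alpha}, e^{-z(\omega_{\ell^\alpha})}\xi)\big)=e^{z(\omega_{\ell^\alpha})}e^{-z(\omega_{\ell^\alpha})}D_{\xi}\psi^{\alpha}(\omega_{\ell^\alpha}, e^{-z(\omega_{\ell^\alpha})}\xi)=D_{\xi}\psi^{\alpha}(\omega_{\ell^\alpha}, e^{-z(\omega_{\ell^\alpha})}\xi),
\end{align*}
and the analogous identity holds with $\alpha=2$, giving $D_{\xi}\psi(\omega, e^{-z(\omega)}\xi)$. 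Hence it suffices to show $\|D_{\xi}\psi^{\alpha}(\omega_{\ell^\alpha}, e^{-z(\omega_{\ell^\alpha})}\xi)-D_{\xi}\psi(\omega, e^{-z(\omega)}\xi)\|\to0$ in probability. I would insert $D_{\xi}\psi(\omega, e^{-z(\omega_{\ell^\alpha})}\xi)$: the difference $D_{\xi}\psi(\omega, e^{-z(\omega_{\ell^\alpha})}\xi)-D_{\xi}\psi(\omega, e^{-z(\omega)}\xi)\to0$ in probability because $D_{\xi}\psi(\omega,\cdot)$ is continuous (the $C^{1}$-smoothness from Theorem~\ref{th:3-4.2}) and $e^{-z(\omega_{\ell^\alpha})}\xi\to e^{-z(\omega)}\xi$, while the remaining term $D_{\xi}\psi^{\alpha}(\omega_{\ell^\alpha}, \zeta_\alpha)-D_{\xi}\psi(\omega, \zeta_\alpha)$, with $\zeta_\alpha=e^{-z(\omega_{\ell^\alpha})}\xi$, is the derivative convergence of Theorem~\ref{th:3-4.3} but evaluated at the $\alpha$-dependent point $\zeta_\alpha$.

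The hard part will be precisely this moving-argument term. For $\psi^\alpha$ the $\alpha$-uniform Lipschitz bound \eqref{3-4.4} permitted a free shift of the argument, but $D_{\xi}\psi^{\alpha}(\omega_{\ell^\alpha},\cdot)$ carries no $\alpha$-uniform modulus of continuity, so Theorem~\ref{th:3-4.3} as stated (fixed $\xi$) does not apply verbatim at $\zeta_\alpha$. I would resolve this by upgrading Theorem~\ref{th:3-4.3} to convergence that is uniform for $\xi$ in compact subsets of $PD(A^{\sigma})$: inspection of Step~2 there shows $\|D_{\xi}\bar z^{\alpha}\|$ is governed by $\|D_{\xi}\bar u\|\leq(1-\mu)^{-1}$ (uniform in $\xi$) times integrals of $|z(\theta_s\omega_{\ell^\alpha})-z(\theta_s\omega)|$ (independent of $\xi$) and of $\|D_{u}F(e^{z(\theta_s\omega_{\ell^\alpha})}\bar u^{\alpha}(s))-D_{u}F(e^{z(\theta_s\omega)}\bar u(s))\|$, so that the $\alpha$-independent Lipschitz dependence of $\bar u^{\alpha}(\cdot,\xi)$ on $\xi$, together with the local uniform continuity of $D_uF$, renders the convergence uniform over compact argument sets; since $\zeta_\alpha\to e^{-z(\omega)}\xi$ stays in a compact set along the almost-sure sub-subsequences furnished by Lemma~\ref{lem:3-2.2}, evaluation at $\zeta_\alpha$ is then legitimate. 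An equivalent route is to rerun the Step~1 and Step~2 estimates of Theorem~\ref{th:3-4.3} directly for the pair $\bar u^{\alpha}(\cdot,\omega_{\ell^\alpha},e^{-z(\omega_{\ell^\alpha})}\xi)$ and $\bar u(\cdot,\omega,e^{-z(\omega)}\xi)$, where the distinct initial data contribute only an extra term of the form $\mathcal{S}\big(e^{-z(\omega_{\ell^\alpha})}\xi-e^{-z(\omega)}\xi\big)$ that vanishes in probability. Throughout, convergence in probability is obtained by the paper's standard subsequence device, passing to a sub-subsequence along which $z(\theta_s\omega_{\ell^{\alpha_{n(k)}}})\to z(\theta_s\omega)$ uniformly on compact $s$-intervals almost surely.
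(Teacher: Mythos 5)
Your proof of the first assertion is essentially the paper's own argument: the same insertion of $\psi^{\alpha}(\omega_{\ell^\alpha},e^{-z(\omega)}\xi)$, the same use of the $\alpha$-uniform Lipschitz constant $\tfrac{\mu}{2(1-\mu)}$ from \eqref{3-4.4} to absorb the shift of the argument, and Theorem~\ref{th:3-4.3} applied at the fixed point $e^{-z(\omega)}\xi$, all wrapped in the sub-subsequence device. Where you genuinely diverge is the derivative claim. The paper disposes of it with the single phrase ``using the similar procedure,'' whereas you make explicit the chain-rule cancellation $D_{\xi}\big(e^{z}\psi^{\alpha}(\cdot,e^{-z}\xi)\big)=D_{\xi}\psi^{\alpha}(\cdot,e^{-z}\xi)$ (correct, since $z$ is scalar) and then correctly observe that the resulting term $D_{\xi}\psi^{\alpha}(\omega_{\ell^\alpha},\zeta_\alpha)-D_{\xi}\psi(\omega,\zeta_\alpha)$ is evaluated at the $\alpha$-dependent point $\zeta_\alpha=e^{-z(\omega_{\ell^\alpha})}\xi$, so that Theorem~\ref{th:3-4.3} (stated for fixed $\xi$, and with no $\alpha$-uniform modulus of continuity for $D_{\xi}\psi^{\alpha}$) does not apply verbatim. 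This is a real subtlety that the paper's proof elides, and your two proposed repairs are both sensible; the second one --- rerunning the Step~1/Step~2 estimates of Theorem~\ref{th:3-4.3} for the pair $\bar u^{\alpha}(\cdot,\omega_{\ell^\alpha},e^{-z(\omega_{\ell^\alpha})}\xi)$, $\bar u(\cdot,\omega,e^{-z(\omega)}\xi)$, where the mismatched initial data contribute only a term $\mathcal{S}\big((e^{-z(\omega_{\ell^\alpha})}-e^{-z(\omega)})\xi\big)$ vanishing in probability --- is the cleaner route and fits the paper's machinery directly. Be cautious with your first route: deducing uniformity from ``local uniform continuity of $D_uF$'' on the orbit sets $\{\bar u^{\alpha}(s,\omega_{\ell^\alpha},\xi)\}$ requires those sets to be precompact in $D(A^{\sigma})$, which is not automatic in infinite dimensions; so if you carry this out, prefer the direct-estimation route. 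As written, your argument for the second claim is an outline rather than a complete proof, but it is an outline of the right argument and is more honest about the gap than the paper itself.
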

\begin{proof}
By \eqref{3-4.4}, we have
\begin{align*}
&\|e^{z(\omega_{\ell^\alpha})}\psi^{\alpha}(\omega_{\ell^\alpha}, e^{-z(\omega_{\ell^\alpha})}\xi)- e^{z(\omega)}\psi(\omega, e^{-z(\omega)}\xi)\|_{\sigma}\\
&\leq\|e^{z(\omega_{\ell^\alpha})}\psi^{\alpha}(\omega_{\ell^\alpha}, e^{-z(\omega_{\ell^\alpha})}\xi)- e^{z(\omega_{\ell^\alpha})}\psi^{\alpha}(\omega_{\ell^\alpha}, e^{-z(\omega)}\xi)\|_{\sigma}\\ \ \ &\ \ \ \ +\|e^{z(\omega_{\ell^\alpha})}\psi^{\alpha}(\omega_{\ell^\alpha}, e^{-z(\omega)}\xi)-e^{z(\omega)}\psi(\omega, e^{-z(\omega)}\xi)\|_{\sigma}\\
&\leq\frac{\mu}{2(1-\mu )}|e^{z(\omega_{\ell^\alpha})-z(\omega)}-1| \|\xi\|_{\sigma}\\
&\ \ \ +|e^{z(\omega_{\ell^\alpha})}|\|\psi^{\alpha}(\omega_{\ell^\alpha}, e^{-z(\omega)}\xi)-\psi(\omega, e^{-z(\omega)}\xi)\|_{\sigma}\\
&\ \ \ +|e^{z(\omega_{\ell^\alpha})}-e^{z(\omega)}|\|\psi(\omega, e^{-z(\omega)}\xi)\|_{\sigma}.
\end{align*}
From the same sub-subsequence argument, we obtain
\begin{align*}
\|e^{z(\omega_{\ell^\alpha})}\psi^{\alpha}(\omega_{\ell^\alpha}, e^{-z(\omega_{\ell^\alpha})}\xi)- e^{z(\omega)}\psi(\omega, e^{-z(\omega)}\xi)\|_{\sigma} \rightarrow 0
\end{align*}
in probability as $\alpha\rightarrow 2$.
Using the similar procedure, we have
\begin{align*}
\|D_{\xi}\big(e^{z(\omega_{\ell^\alpha})}\psi^{\alpha}(\omega_{\ell^\alpha}, e^{-z(\omega_{\ell^\alpha})}\xi)\big)- D_{\xi}\big(e^{z(\omega)}\psi(\omega, e^{-z(\omega)}\xi)\big)\|\rightarrow 0
\end{align*}
in probability as $\alpha\rightarrow 2$. Thus the proof is complete.
\end{proof}\par

\section*{Acknowledgement}
\indent
\par
The authors would like to thank the reviewers for their helpful comments. This work was partially supported by the
	National Natural Science Foundation of China(No.12326414,12271080), Sichuan Science and Technology Program(No.2023NSFSC0076).

\end{document}